\theoremstyle{plain} 
\newtheorem{lemma}[equation]{Lemma}
\newtheorem{proposition}[equation]{Proposition}
\newtheorem{theorem}[equation]{Theorem}
 \newtheorem{conjecture}[equation]{Conjecture}
\newtheorem{smallBallConjecture}[equation]{Small Ball Conjecture}
\newtheorem{talagrand}[equation]{Talagrand's Theorem}
\newtheorem{bg}[equation]{The  Simplest Instance of the Beck Gain}
\newtheorem{lp}[equation]{Littlewood-Paley Inequalities}
\newtheorem{discrep}[equation]{The $ L ^{\infty }$ Norm of Discrepancy Function Conjecture}
\newtheorem{roth}[equation]{K.~Roth's Theorem}
\newtheorem{smooth}[equation]{Smooth Small Ball Conjecture}
\newtheorem{sheet}[equation]{The Small Ball Conjecture for the Brownian Sheet}
\theoremstyle{definition}
\newtheorem{definition}[equation]{Definition}
\theoremstyle{remark}
\newtheorem{remark}[equation]{Remark}
\numberwithin{equation}{section}
\def\norm#1.#2.{\lVert#1\rVert_{#2}}
\def\Norm#1.#2.{\bigl\lVert#1\bigr\rVert_{#2}}
\def\NOrm#1.#2.{\Bigl\lVert#1\Bigr\rVert_{#2}}
\def\NORm#1.#2.{\biggl\lVert#1\biggr\rVert_{#2}}
\def\NORM#1.#2.{\Biggl\lVert#1\Biggr\rVert_{#2}}
\def\ip#1,#2,{\langle #1,#2\rangle}
\def\Ip#1,#2,{\bigl\langle#1,#2\bigr\rangle}
\def\IP#1,#2,{\Bigl\langle#1,#2\Bigr\rangle}
\def\mid{\,:\,}
\def\abs#1{\lvert#1\rvert}
\def\Abs#1{\bigl\lvert#1\bigr\rvert}
\def\ABs#1{\biggl\lvert#1\biggr\rvert}
\def\XXint#1#2#3{{\setbox0=\hbox{$#1{#2#3}{\int}$}
     \vcenter{\hbox{$#2#3$}}\kern-.5\wd0}}
\begin{document}
\title[Small Ball Inequality in All Dimensions]
{On the Small Ball Inequality in All Dimensions}
\author[D.~Bilyk]{Dmitriy Bilyk}
\address{School of Mathematics \\ Georgia Institute of Technology \\ Atlanta GA 30030}

\email{bilyk@math.gatech.edu}

\author[M.\thinspace T.~Lacey]{Michael T. Lacey}
\address{School of Mathematics \\ Georgia Institute of Technology \\ Atlanta GA 30030}

\email{lacey@math.gatech.edu}

\author[A. Vagharshakyan]{Armen Vagharshakyan}
\address{School of Mathematics \\ Georgia Institute of Technology \\ Atlanta GA 30030}

\email{armenv@math.gatech.edu}

\begin{abstract}
Let $ h_R$ denote an $ L ^{\infty }$ normalized Haar function
adapted to a dyadic rectangle $ R\subset [0,1] ^{d}$. We show that
for choices of coefficients $ \alpha (R)$, we have the following
lower bound on the $ L ^{\infty }$ norms of the sums of such
functions, where the sum is over rectangles of a fixed volume:
\begin{equation*}
 n ^{ \frac{d-1}{2}-\eta}  \NOrm \sum _{\abs{ R}= 2 ^{-n}} \alpha (R) h_R (x).
 L ^{\infty} ([0,1] ^{d}). \gtrsim 2^{-n} \sum _{\abs{ R}= 2 ^{-n}} \abs{\alpha
 (R)}
\,, \quad \textup{for some } \quad 0<\eta < \frac{1}{2}\,.
\end{equation*}
  The point
of interest is the dependence upon the logarithm of the volume of
the rectangles. With $n^{(d-1)/2}$ on the left above, the inequality   is trivial,
while it is conjectured that the inequality holds  with
$n^{(d-2)/2}$. This is known    in the
case of $ d=2$ \cite{MR95k:60049}, and a recent paper of two of the authors \cite{bl}
proves a  partial result towards the conjecture in three dimensions.
In this paper, we show that the argument of \cite{bl} can be extended to
arbitrary dimension.  We also prove related results in the subjects of  the Irregularity
of Distribution, and Approximation Theory.
The authors are unaware of any prior results on these questions in any
dimension  $ d\ge4$.
\end{abstract}

\maketitle

\section{The Small Ball Conjectures} 

In this paper we will prove results in dimension four and higher in
three separate areas, Number Theory, Approximation Theory, and
Probability Theory: (a) the theory of Irregularities of
Distribution, (b) the Kolmogorov Entropy of spaces of functions with
bounded mixed derivative, and (c) Small Deviation Inequalities  for the Brownian
Sheet. As far as the authors are aware, these are the first  results
on these questions which provide more information than that given by
an average case analysis. Underlying these three results is a
central inequality, the \emph{Small Ball Inequality} for the Haar
functions, which we state here.  The related areas are addressed in
the next section.

 In one dimension, the class of dyadic intervals is $\mathcal D {} \coloneqq {}\{ [j2^k,(j+1)2^k)\mid j,k\in \mathbb Z\} $.
 Each dyadic interval has a left and right half, indicated below, which are also dyadic.  Define the
 Haar functions
 \begin{equation*}
h_I \coloneqq -\mathbf 1 _{I_{\textup{left}}}+ \mathbf 1 _{I_{\textup{right}}}
\end{equation*}
 Note that this is an $ L ^{\infty }$ normalization of these functions, which we will
 keep throughout this paper.

 In dimension $d $, a \emph{dyadic rectangle} is a product of dyadic intervals, thus an
element of
 $\mathcal D^d $.   We define a Haar function associated to $R $  to  be the product of the Haar functions associated
 with each side of $R $, namely
 \begin{equation*}
 h_{R_1\times\cdots\times R_d }(x_1,\ldots,x_d) {} \coloneqq {}\prod _{j=1}^d h _{R_j}(x_j).
 \end{equation*}
 This is the usual `tensor' definition.

 We will concentrate on rectangles with  fixed volume and
 consider a local problem.
 This is the `hyperbolic'
 assumption, that pervades the subject.
 Our concern is the following Theorem and Conjecture concerning a
 \emph{lower bound} on the $ L ^{\infty }$ norm of sums of hyperbolic Haar functions:

 \begin{smallBallConjecture} \label{small} For dimension $ d\ge 3$ we have the inequality
 \begin{equation}\label{e.Talagrand}
2 ^{-n} \sum _{\abs{ R}= 2 ^{-n} } \abs{ \alpha(R) } {}\lesssim{} n
^{\frac12(d-2) } \NOrm \sum _{\abs{ R}\ge 2 ^{-n} } \alpha (R) h_R
.\infty ..
\end{equation}
 \end{smallBallConjecture}

 Average case analysis --- that is passing through $ L ^2 $  --- shows that we always have
\begin{equation*}
2 ^{-n} \sum _{\abs{ R}= 2 ^{-n} } \abs{ \alpha(R) } {}\lesssim{} n
^{\frac12(d-1) } \NOrm \sum _{\abs{ R}\ge 2 ^{-n} } \alpha (R) h_R
.\infty ..
\end{equation*}
Namely, the constant on the right is bigger than in the conjecture by a factor
of $ \sqrt n$.
 We refer to this as the `average case estimate,' and refer to improvements over this
 as a `gain over the average case estimate.'
Random choices of coefficients $ \alpha (R)$ show that the Small Ball Conjecture is sharp.

In dimension $ d=2$, the Conjecture was resolved by \cite{MR95k:60049}.\footnote{This
result should be compared to \cite{MR0319933}, as well as \cites{MR637361,MR96c:41052}.}

 \begin{talagrand}\label{j.talagrand}
    For dimensions $d\ge2 $, we have
 \begin{equation}  \label{e.talagrand}
2 ^{-n} \sum _{\abs{ R}= 2 ^{-n} } \abs{ \alpha(R) } {}\lesssim{}
\NOrm \sum _{\abs R = 2 ^{-n}} \alpha (R) h_R .\infty ..
 \end{equation}
 Here, the sum on the right is taken over all rectangles with area \emph{at least } $ 2
 ^{-n}$.
 \end{talagrand}

The main result of this note is the next Theorem, which shows that
there is a gain over the trivial bound in the  Small Ball Conjecture
in dimensions $ d\ge 3$.   In dimension $ d=3$, this result was
proved in \cite{bl}. The three-dimensional result and its present
extension build
 upon
 the method devised by \cite{MR1032337}. As far as the authors are aware, this is
the first `gain over the average case bound'
 known in dimensions four and higher.

\begin{theorem}\label{t.d=3} In dimension $ d\ge 3$, there exists a number $\eta(d)>0$ such
that for all choices of coefficients $ \alpha (R)$, we have the
inequality
\begin{equation}\label{e.d=3}
 n ^{\frac{d-1}{2}-\eta (d) }  \NOrm \sum _{\abs{ R} \ge 2 ^{-n}} \alpha (R) h_R. \infty .
\gtrsim 2 ^{-n} \sum _{\abs{ R}= 2 ^{-n} } \abs{ \alpha(R) }\,.
\end{equation}
\end{theorem}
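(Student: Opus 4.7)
The strategy I would follow is the test-function (Riesz-product) method, introduced by Talagrand for $d=2$ and extended by Bilyk--Lacey in \cite{bl} to $d=3$. By duality it suffices to exhibit a test function $\Psi$ with $\|\Psi\|_1 \lesssim 1$ whose pairing with $f \eqdef \sum_{|R|\ge 2^{-n}}\alpha(R)h_R$ is at least $c\,n^{-(d-1)/2 + \eta}\,2^{-n}\sum_{|R|=2^{-n}}|\alpha(R)|$. Since Talagrand's Theorem \eqref{e.talagrand} already controls rectangles of volume strictly larger than $2^{-n}$ with the full constant $1$, I would reduce immediately to producing such a $\Psi$ for the critical-volume sum $f^{\ast} \eqdef \sum_{|R|=2^{-n}}\alpha(R)h_R$, and, by a standard discretization, to coefficients taking values in $\{-1,0,1\}$.

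Next, group the critical rectangles by their \emph{shape}. For a composition $\vec r = (r_1,\dots,r_d)$ of $n$ into nonnegative integers, set $f_{\vec r} \eqdef \sum_{\mathrm{shape}(R)=\vec r}\alpha(R)h_R$. There are $\binom{n+d-1}{d-1} \simeq n^{d-1}$ shapes, and $L^2$-orthogonality between distinct shapes is what produces the trivial exponent $(d-1)/2$. Fix a separation parameter $a=a(d)$ and select a subfamily $\mathcal{S}$ of shapes which are pairwise $a$-separated (any two distinct shapes differ by at least $a$ in some coordinate), with $|\mathcal{S}| \simeq (n/a)^{d-1}$. Form the Riesz-type product
\[
\Psi \eqdef \prod_{\vec r \in \mathcal{S}} \bigl(1 + \rho\, F_{\vec r}\bigr),\qquad F_{\vec r} \eqdef f_{\vec r}/\|f_{\vec r}\|_\infty,
\]
with $\rho$ small so that each factor is positive. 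Expansion shows the linear term pairs with $f^{\ast}$ to produce the desired bound; the task is to keep $\|\Psi\|_1 \lesssim 1$ by controlling the higher-order products.

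The decisive ingredient, and the main obstacle, is a \emph{Beck-gain} estimate for products $F_{\vec r^{1}}\cdots F_{\vec r^{k}}$ of pairwise $a$-separated shapes. Expanding such a product gives a sum over $k$-tuples of rectangles $(R_1,\dots,R_k)$ whose Haar functions must ``coincide'' along each coordinate in order to survive integration, and the separation of shapes severely restricts these coincidences. One aims at an $L^2$ bound that beats the independence prediction by a factor $n^{-\gamma(d)}$. The $d=3$ case is precisely the content of \cite{bl}, building on \cite{MR1032337}. I would proceed by induction on $d$: slice along the $d$-th coordinate so that on each slice the remaining $d-1$ coordinates carry the inductive gain, and splice these together via an $L^{p}$ convexity argument interpolating between the trivial $L^{2}$ estimate and an elementary $L^{1}$ bound for the Riesz product. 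Balancing the size of $\mathcal{S}$, which shrinks as $a$ grows, against the per-collision gain, which grows with $a$, produces a strictly positive exponent $\eta(d)$.

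The principal difficulty is that the combinatorics of rectangle coincidences explodes with $d$: for a $k$-fold product one must simultaneously arrange $k$ rectangles along each of $d$ independent coordinates, and the bookkeeping in \cite{bl} was already delicate in $d=3$. Making the induction close requires that both the separation parameter $a(d)$ and the resulting gain $\gamma(d)$ degrade in a controlled way, so that $\eta(d)$ survives through all dimensions; quantifying this trade-off is the heart of the argument and is what will force $\eta(d)$ to shrink to $0$ as $d\to\infty$.
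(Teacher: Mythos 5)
Your high-level frame (duality, a Riesz-product test function, and a Beck-gain estimate for products with coincidences) is the same as the paper's, but the specific construction you propose has a gap at exactly the point where the difficulty lives. You build a \emph{long} Riesz product, one factor per shape in an $a$-separated family $\mathcal S$ with $|\mathcal S|\simeq (n/a)^{d-1}$ factors, and you claim that the separation ``severely restricts'' coincidences. It does not: the obstruction in $d\ge 3$ is \emph{exact} equality of a single coordinate between two distinct shapes, and putting the shapes on a lattice $a\mathbb Z^{d-1}$ leaves a positive proportion of pairs sharing a coordinate value; separation in the other coordinates is irrelevant to this. Consequently, with polynomially many factors and $\rho\simeq n^{-(d-1)/2+\eta}$, the expansion of $\Psi$ contains, at every order $k$, families of coincident $k$-fold products whose cardinality grows like powers of $n$, and to keep $\lVert\Psi\rVert_1\lesssim 1$ (and to control the pairing of the coincident terms with the critical-volume sum and with the rectangles of volume $>2^{-n}$, which do not drop out of a positive product) you would need a per-coincidence gain strong enough to beat $n^{C(d)}$ combinatorial losses. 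That is essentially equivalent to proving the full Small Ball Conjecture; the available Beck gain (an extra $n^{-1/2}$ for one coincidence, Lemma~\ref{l.SimpleCoincie}, degrading as products get longer) cannot absorb such factors. Incidentally, the opening reduction ``by Talagrand to the critical volume'' is not available either: in the paper the larger rectangles are handled because one pairs with the strongly distinct part $\Psi^{\textup{sd}}$, which consists of Haar functions of volume $\le 2^{-n}$ and is therefore orthogonal to them --- and this is precisely why one must prove $\lVert\Psi^{\textup{sd}}\rVert_1\lesssim1$ rather than just $\lVert\Psi\rVert_1\lesssim1$.

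The paper's proof avoids your difficulty by making the product \emph{short} and structured in the first coordinate: it takes $q=\lfloor a n^{\varepsilon}\rfloor$ factors $F_t=\sum_{\vec r\colon r_1\in I_t}f_{\vec r}$, where $I_1,\dotsc,I_q$ partition $\{1,\dotsc,n\}$, so that distinct factors can never coincide in coordinate $1$; and it uses the deliberately ``false'' normalization $\widetilde\rho\simeq q^{1/4}n^{-(d-1)/2}$, strictly smaller than the true $L^2$ normalization, so that $\Psi=\prod_t(1+\widetilde\rho F_t)$ is positive only with high probability. Then $\mathbb E\Psi=1$, an $L^2$ bound $\lVert\Psi\rVert_2\lesssim\exp(a'q^{1/2})$, and a subgaussian tail for $\rho F_t$ (itself a consequence of Littlewood--Paley plus the simplest Beck gain) give $\lVert\Psi\rVert_1\lesssim1$, while the not-strongly-distinct part is controlled by the graph-theoretic Beck gain of Theorem~\ref{l.admissible<}, whose $n^{-\eta}$ per vertex only has to beat powers of $q\simeq n^{\varepsilon}$, not powers of $n$. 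The gain over the average-case bound is then only $q^{1/4}\simeq n^{\varepsilon/4}$, which is exactly why $\eta(d)$ comes out of order $d^{-2}$. Your proposal, as written, does not contain the short-product/false-normalization mechanism, and without it the $L^1$ control of the test function fails.
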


We take this Theorem as basic to our study, and use its proof  to
derive results on the  three other questions mentioned at the
beginning of the introduction.

The principal difficulty in three and higher dimensions is that two
dyadic rectangles of the same volume can share a common side length.  Beck
\cite{MR1032337} found a specific estimate in this case, an estimate that
is extended in \cite{bl}.  In this note, the main technical device is the
extension of this estimate, in the simplest instance, to arbitrary dimensions,
 see Lemma~\ref{l.SimpleCoincie}. This Lemma, and its extension to longer products
 Theorem~\ref{l.admissible<},
 is the main technical innovation of this paper.   The value of $ \eta $ that we can get out of this
line of reasoning appears to be of the order $d ^{-2} $, imputing
additional interest to the methods of proof used to improve this
estimate. Indeed, many aspects of our analysis are suboptimal, and the
most essential techniques necessary to optimize the arguments of this paper
are yet to be discovered.

\section{Related Results}

\subsection*{The $ L ^{\infty }$ Norm of the Discrepancy Function} 

In $d$ dimensions, take $\mathcal A_N$ to be $N$ points in the unit cube, and consider
the Discrepancy Function
\begin{equation}  \label{e.discrep}
D_N(x):=\sharp \mathcal A_N \cap [\vec 0,\vec x)-N \abs{ [\vec 0,\vec x)}\,.
\end{equation}
Here, $[\vec 0,\vec x)=\prod _{j=1}^d [0,x_j)$, that is a rectangle with antipodal corners being $\vec 0$ and $\vec x$.
Relevant norms of this function must tend to infinity,
in dimensions $2$ and higher.
 The  canonical result of this type
is the following estimate proved in \cite{MR0066435}.

\begin{roth}  \label{t.roth}We have the universal estimate
\begin{equation*}
\norm D_N. 2. \gtrsim (\log N) ^{(d-1)/2}\,,
\end{equation*}
with the implied constant only depending upon dimension.
\end{roth}

For all $ 1<p<\infty $, $ \norm D_N .p.$ admits the same lower
bound, a result in  \cite{MR0491574}.
The endpoint estimates of $ p=1,\infty $ are however much
harder, with definitive information known only in two dimensions.
The method of proof of this Theorem, and the $ L ^{p}$ variants can
be summarized as follows:  Fix $ 2N\le 2 ^{n}<N$, and just project
the Discrepancy Function onto the (hyperbolic) Haar functions $ \{
h_R\mid \lvert  R\rvert=2 ^{-n} \}$. By the Bessel inequality, this
provides a lower bound on the $ L ^{2}$ norm of $ D_N$. This same
method of proof, with the Littlewood-Paley inequalities replacing
the Bessel inequality, can be used to prove the $ L  ^{p}$ lower
bound, for $ 1<p<\infty $.  See \cite{MR903025}.

At $ L ^{\infty }$, guided by the sharpness of the Small Ball Conjecture, we pose
the Conjecture below, which represents a $ \sqrt {\log N}$ gain over the lower bound
proved by Roth.

\begin{discrep}
In dimension $ d\ge3$, we have the lower estimate valid for all point sets $ \mathcal A_N$.
\begin{equation*}
\norm D_N. \infty . \gtrsim (\log N) ^{d/2}\,.
\end{equation*}
\end{discrep}

In dimension $ d=2$, this is the Theorem of \cite{MR0319933}.  In dimension $ d=3$,
\cites{MR1032337,bl} give partial information about this conjecture.  In this paper,
we can prove the following result, which appears to be new in dimensions $ d\ge 4$.

\begin{theorem}\label{t.discrep}  In dimension $ d\ge 3$ there is a positive
$ \eta =\eta (d)>0$ for which we have the uniform estimate
\begin{equation*}
\norm D_N. \infty . \gtrsim (\log N) ^{(d-1)/2+\eta }\,.
\end{equation*}
\end{theorem}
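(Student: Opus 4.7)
The plan is to execute a Halász-style duality argument, pairing $D_N$ against a Riesz product built from hyperbolic $r$-functions, and to import the $n^{\eta}$ savings supplied by Theorem~\ref{t.d=3} — or more precisely by the coincidence estimates (Lemma~\ref{l.SimpleCoincie} and Theorem~\ref{l.admissible<}) that drive its proof.

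Fix $n$ with $2N\leq 2^n\leq 4N$ and let $\mathbb{H}_n=\{\vec r\in\mathbb{Z}_{\geq 0}^d \mid r_1+\cdots+r_d=n\}$, so that $|\mathbb{H}_n|\asymp n^{d-1}$. A direct computation yields
\begin{equation*}
\langle D_N, h_R\rangle \;=\; -N\cdot 4^{-d}\cdot 2^{-2n}
\end{equation*}
for every dyadic rectangle $R$ of shape $\vec r\in\mathbb{H}_n$ that is disjoint from $\mathcal{A}_N$. Since each point of $\mathcal{A}_N$ belongs to exactly one rectangle of each shape, at least half of the $2^n$ rectangles of any given shape are empty. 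For each $\vec r$ set
\begin{equation*}
f_{\vec r}(x) \;=\; -\!\!\!\!\sum_{\substack{R\text{ of shape }\vec r\\ R\cap \mathcal{A}_N=\emptyset}}\!\!\!\!h_R(x),
\end{equation*}
so $\|f_{\vec r}\|_\infty\leq 1$ and $\langle D_N, f_{\vec r}\rangle\gtrsim N\cdot 2^{-n}\asymp 1$. For a parameter $0<\gamma<1$ to be optimized, form the Riesz product
\begin{equation*}
\Psi(x) \;=\; \prod_{\vec r\in\mathbb{H}_n}\bigl(1+\gamma f_{\vec r}(x)\bigr)-1 \;=\; \sum_{\emptyset\neq S\subset\mathbb{H}_n}\gamma^{|S|}\prod_{\vec r\in S}f_{\vec r}(x),
\end{equation*}
which satisfies $\Psi+1\geq 0$ pointwise. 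The goal is to choose $\gamma$ of order $n^{-(d-1)/2+\eta/2}$ and apply $\|D_N\|_\infty\geq |\langle D_N,\Psi\rangle|/\|\Psi\|_1$.

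The linear part of $\langle D_N,\Psi\rangle$ is $\gamma\sum_{\vec r\in\mathbb{H}_n}\langle D_N,f_{\vec r}\rangle\gtrsim \gamma n^{d-1}\asymp n^{(d-1)/2+\eta/2}$, which, once shown to dominate both the higher-order contributions to the pairing and the higher-order contributions to $\|\Psi\|_1\leq 2+\sum_{|S|\geq 1}\gamma^{|S|}|\int\prod_{\vec r\in S}f_{\vec r}|$, will deliver the theorem. The two-factor interactions $(|S|=2)$ vanish: for $\vec r_1\neq\vec r_2$ the Haar functions $h_R$ and $h_{R'}$ of shapes $\vec r_1$ and $\vec r_2$ are $L^2$-orthogonal, hence $\int f_{\vec r_1}f_{\vec r_2}=0$. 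The substance of the argument is to bound the longer products $|\int\prod_{\vec r\in S}f_{\vec r}|$ and $|\langle D_N,\prod_{\vec r\in S}f_{\vec r}\rangle|$ for $|S|\geq 3$.

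Each such product expands over tuples $(R_{\vec r})_{\vec r\in S}$ of rectangles whose coordinate-wise side lengths exhibit coincidences $r_j=r'_j$ along various axes — a phenomenon absent in two dimensions but unavoidable in $d\geq 3$, where distinct compositions of $n$ into $d$ parts must share at least one entry once $|S|$ is large. The control of these products in the presence of coincidences is precisely the content of the Beck-Gain estimate Lemma~\ref{l.SimpleCoincie} together with its multi-factor extension Theorem~\ref{l.admissible<}. Each additional factor beyond the first contributes a saving of $n^{-\eta}$ over the trivial bound, and summing the resulting geometric series yields $\|\Psi\|_1\lesssim 1$ together with the subordination of the higher-order terms of $\langle D_N,\Psi\rangle$. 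The main obstacle is therefore exactly the same as in Theorem~\ref{t.d=3} itself: proving the Beck-Gain estimates in the face of shape-coincidences, which is the central technical accomplishment of the paper; once that is in hand, the argument above assembles to give $\|D_N\|_\infty\gtrsim(\log N)^{(d-1)/2+\eta/2}$, completing the proof upon renaming $\eta/2\mapsto\eta$.
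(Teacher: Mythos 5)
Your setup is fine --- the computation $\ip D_N, h_R, = -N4^{-d}2^{-2n}$ for empty boxes, the construction of $f_{\vec r}$ with $\ip D_N, f_{\vec r}, \gtrsim 1$, and the duality scheme are exactly the standard ingredients the paper imports from \cite{MR903025} --- but the core of your argument has a genuine gap: you form the \emph{full-length} Riesz product over all $\abs{\mathbb H_n}\asymp n^{d-1}$ shapes with weight $\gamma\simeq n^{-(d-1)/2+\eta/2}$ per factor, and then assert that Lemma~\ref{l.SimpleCoincie} and Theorem~\ref{l.admissible<} give a saving of $n^{-\eta}$ per factor whose geometric series yields $\norm \Psi.1.\lesssim 1$. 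The paper's estimates do not apply in that regime. Theorem~\ref{l.admissible<} is normalized by $\rho^{\abs V}$ with $\rho=\sqrt q\,n^{-(d-1)/2}$, where each vertex indexes a \emph{block} $F_t$ of $\approx n^{d-1}/q$ shapes and $V\subset\{1,\dotsc,q\}$ with $q=n^{\varepsilon}$; its conclusion $[C_0\abs V^{C_1}p^{C_2}q^{C_3}n^{-\eta}]^{\abs V}$, with $\eta\approx 1/(16d)$, is useful only because $\varepsilon$ is chosen tiny relative to $\eta$, so that $q^{C_3}$, the H\"older exponents $p\lesssim q^{2b}$, and the count $\binom{q}{v}v^{2dv}$ of coincidence patterns in \eqref{e.main} are all dominated by $n^{-\eta v}$. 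In your product the ``blocks'' are the $n^{d-1}$ individual shapes, so every one of these factors becomes a positive power of $n$ that swamps $n^{-\eta}$; already at third order a worst-case count (in $d=3$: about $n^3$ shape triples with a coincidence pattern, each contributing $O(1)$ after summing over rectangles) gives $\gamma^3\cdot n^3\approx n^{3\eta/2}$, so the claimed summation is not automatic even at the first nontrivial level. Controlling a long Riesz product with per-factor weight at (or above) the $L^2$ normalization is precisely the open obstruction to the full conjecture; if your step worked one would obtain the $(\log N)^{d/2}$ bound, not an $\eta$-gain.

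There is a second, independent gap: you give no argument for the higher-order pairings $\ip D_N, \prod_{\vec r\in S}f_{\vec r},$. In your product, two shapes of length $n$ can differ by a transposition of adjacent entries, so their product lives at scale as small as $n+1$, where $\abs{\ip D_N,h_U,}$ can be comparable to the linear terms and no decay is available without cancellation you do not address. The paper's proof avoids both problems by reusing the \emph{short} product $\Psi^{\textup{sd}}$ from the proof of Theorem~\ref{t.d=3} (so $\norm \Psi^{\textup{sd}}.1.\lesssim1$ is already established), with the single modification \eqref{e.Iq} that the intervals $I_t$ are separated; this forces every $\mathsf r$ function in $\Psi^{\textup{sd}}_k$, $k\ge2$, to have parameter $\abs{\vec s}\ge n+kn/(2q)$, whence the elementary bound $\abs{\ip D_N,f_{\vec s},}\le N2^{-\abs{\vec s}}$ makes those terms geometrically negligible. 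Your proposal has no analogue of this separation device. (A minor further point: your $f_{\vec r}$ has coefficients in $\{0,-1\}$ rather than $\{\pm1\}$, so it is not an $\mathsf r$ function and $f_{\vec r}^2\not\equiv1$; this is repairable by the standard construction the paper cites, but as written it would also interfere with the Beck Gain machinery you invoke.)
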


The proof of this result follows easily from the method of proof of Theorem~\ref{t.d=3},
and will be presented below.

\subsection*{Metric Entropy of Mixed Derivative Sobolev Spaces}

While the special structure of the Haar functions can be exploited
to prove the Small Ball Conjecture, one would \emph{not} anticipate
that this special structure  is in fact essential to the Conjecture.
Thus, we formulate a smooth variant of the Small Ball Conjecture.

Fix a continuous non-constant function $ \varphi $, supported on $
[-1/2,1/2]$, and of mean zero.  For a dyadic interval $ I$, let
\begin{equation*}
\varphi _{I} (x)= \varphi \bigl( \tfrac {x- c (I)} {\lvert  I\rvert } \bigr)\,,
\end{equation*}
be a translation and rescaling of $ \varphi $ so that it is supported on $ I$.
Then, for a dyadic rectangle $ R=R_1 \times \cdots \times R_d$, set
\begin{equation*}
\varphi _{R} (x_1 ,\dotsc, x_d)=\prod _{j=1} ^{d} \varphi _{R_j} (x_j)\,.
\end{equation*}

\begin{smooth}\label{c.smooth}
 For dimension $ d\ge 3$ we have the inequality
 \begin{equation}\label{e.smoothTalagrand}
2 ^{-n} \sum _{\abs{ R}= 2 ^{-n} } \abs{ \alpha(R) } {}\lesssim{} n
^{\frac12(d-2) } \NOrm \sum _{\abs{ R}\ge 2 ^{-n} } \alpha (R)
\varphi _R .\infty ..
\end{equation}
The implied constant depends upon dimension $ d$ and $ \varphi $ only.
\end{smooth}

In this direction, we will prove a result in the same spirit as our Main Theorem.

\begin{theorem}\label{t.smooth}
Suppose  $ \varphi $ is continuous, supported on $ [-1/2,1/2]$,  of
mean zero, and such that $ \ip \varphi , h _{[-1/2,1/2]}, \neq 0$.
For dimension $ d\ge 3$, there is a positive
$ \eta = \eta (d)$ so that we have the inequality below
 \begin{equation}\label{e.smoothEtaTalagrand}
2 ^{-n} \sum _{\abs{ R}= 2 ^{-n} } \abs{ \alpha(R) } {}\lesssim{} n
^{\frac12(d-1) - \eta  } \NOrm \sum _{\abs{ R}\ge 2 ^{-n} } \alpha
(R) \varphi _R .\infty ..
\end{equation}
The implied constant depends upon $ \varphi $.
\end{theorem}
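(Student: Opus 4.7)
The plan is to reduce Theorem \ref{t.smooth} to the Haar small ball inequality (Theorem \ref{t.d=3}) by Haar-decomposing $\varphi$ and pairing against the same test function used in the proof of Theorem \ref{t.d=3}. The hypothesis $\langle\varphi,h_{[-1/2,1/2]}\rangle\neq 0$ guarantees that $\varphi_R$ has a nontrivial Haar component at scale $|R|$, which will be the leading contribution; the rest of $\varphi_R$ lives at strictly finer Haar scales, where we hope it can be absorbed.

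After rescaling I assume $c:=\langle\varphi,h_{[-1/2,1/2]}\rangle=1$ and decompose $\varphi=h_{[-1/2,1/2]}+\psi$ with $\psi$ continuous, mean zero, supported on $[-1/2,1/2]$, and $\langle\psi,h_{[-1/2,1/2]}\rangle=0$. Since $\psi$ is continuous and orthogonal to the top-level Haar function, it expands as $\psi=\sum_{J\subsetneq[-1/2,1/2]}\gamma_J h_J$ with Haar support on strictly finer dyadic intervals, and the coefficients $\gamma_J$ decay with $|J|$. Taking tensor products in $d$ variables,
\begin{equation*}
\varphi_R \;=\; \prod_{j=1}^{d}(h_{R_j}+\psi_{R_j}) \;=\; h_R \;+\; \sum_{T\subsetneq[d]}\prod_{j\in T}h_{R_j}\prod_{j\notin T}\psi_{R_j}.
\end{equation*}
The leading term contributes $\sum_R\alpha(R)h_R$, to which Theorem \ref{t.d=3} applies. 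Each of the $2^{d}-1$ correction terms is a Haar series over rectangles $R'$ with $R'_j=R_j$ for $j\in T$ and $R'_j\subsetneq R_j$ for $j\notin T$, so strictly finer in at least one coordinate than the rectangles of the leading term.

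The proof of Theorem \ref{t.d=3} produces a Riesz-product test function $\Psi$ with $\|\Psi\|_1\lesssim 1$ and $\langle\Psi,\sum\alpha(R)h_R\rangle\gtrsim n^{-(d-1)/2+\eta}\cdot 2^{-n}\sum|\alpha(R)|$. Pair this same $\Psi$ against $\sum\alpha(R)\varphi_R$: the leading term immediately yields the desired lower bound, and it remains to show that the pairings of $\Psi$ with the $2^{d}-1$ correction terms are strictly smaller. The main obstacle lies precisely here: a Riesz product has Haar spectrum spread across many scales, so naive Haar orthogonality with the finer-scale correction terms does not suffice. The natural remedy is to replace $h_I$ by $\varphi_I$ throughout the coincidence analysis of Lemma \ref{l.SimpleCoincie} and Theorem \ref{l.admissible<}, rerunning the entire argument in the smooth setting and tracking the extra error coming from the Haar coefficients $\gamma_J$ of $\psi$. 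Because $\varphi$ is continuous, these tails decay in $|J|$, so each replacement of a Haar function by a smooth one should cost only a bounded multiplicative factor. A mild shrinkage of $\eta(d)$ from the value in Theorem \ref{t.d=3} should then absorb the aggregate loss across all $2^d-1$ correction terms and close the inequality.
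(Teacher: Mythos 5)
Your reduction $\varphi = c\,h_{[-1/2,1/2]}+\psi$ and the idea of pairing the smooth sum against the same test function are reasonable starting points, but the crux of the theorem is exactly the part you leave as a hope: controlling the pairing of the test function with the contributions at scales strictly finer than $2^{-n}$. Your proposed remedy --- rerunning the whole coincidence analysis of Lemma~\ref{l.SimpleCoincie} and Theorem~\ref{l.admissible<} with $h_I$ replaced by $\varphi_I$ --- is not justified and would in fact break the machinery: that analysis leans on Haar-specific algebra (the product rule of Proposition~\ref{p.productsofhaars}, the identity $f_{\vec r}^2\equiv 1$, martingale Littlewood--Paley inequalities and conditional expectations with respect to dyadic sigma-fields), none of which survives the substitution of a general continuous $\varphi$; the claim that each replacement ``costs only a bounded multiplicative factor'' absorbed by shrinking $\eta$ is an assertion, not an argument. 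Moreover, the decay of the Haar coefficients of $\psi$ that you invoke is only qualitative (a modulus of continuity), so it cannot by itself supply the quantitative gain you need against the many higher-order Haar scales present in the Riesz product.

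The paper's route avoids rebuilding anything smooth. It keeps the Haar-based Riesz product $\Psi^{\textup{sd}}$, but built on \emph{separated} frequency blocks \eqref{e.Iq} (with the mass condition \eqref{e.4} ensuring a constant fraction of $\sum|\alpha(R)|$ is captured). The diagonal pairing \eqref{e.zIP}, using only $\langle\varphi,h_{[-1/2,1/2]}\rangle\neq 0$, gives the main term against $\Psi^{\textup{sd}}_1$. For the higher-order terms, two elementary facts close the argument: (i) because the blocks are separated, a strongly distinct $k$-fold product of $\mathsf r$ functions is an $\mathsf r$ function with parameter $\vec s$ of length at least $n+k\,n/(2q)$ as in \eqref{e.Ds}; and (ii) the single estimate \eqref{e.varphiIP}, namely $\langle\varphi_{\vec r},f_{\vec s}\rangle=0$ if some $s_j<r_j$ and $|\langle\varphi_{\vec r},f_{\vec s}\rangle|\lesssim C_\varphi 2^{-|\vec s-\vec r|}$ otherwise (a consequence of the support, mean zero, and boundedness of $\varphi$ via \eqref{e.IPphi}). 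Crude counting of the admissible $\vec s$ then makes the $k$-th order contribution geometrically small in $k$, which is the quantitative input your sketch lacks. So the gap is concrete: you have no proof for the fine-scale corrections, and the mechanism that actually handles them (separated blocks plus the geometric decay \eqref{e.varphiIP}) is absent from your proposal.
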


With this Theorem, we can establish new results on the metric entropy of
certain Sobolev spaces of functions with mixed derivative in certain $ L ^{p}$ spaces.
 In $d $ dimensions,
 consider the map
 \begin{equation*}
 \operatorname {Int}_d f(x_1,\cdots,x_d) {}\coloneqq{}\int_0 ^{x_1 }\!\!\cdots \!\!\int_0 ^{x_d } f(y_1,\cdots,y_d)\; dy_1\cdots
 dy_d.
 \end{equation*}
 We consider this as a map from $L ^{p }([0,1]^d) $ into $C([0,1]^d) $.
 Clearly, the image of $\operatorname {Int}_d $ consists of
 functions with $ L ^{p}$ integrable mixed partial derivatives.
 Let us set
 \begin{equation*}
 \operatorname {Ball}(MW ^{p} ([0,1] ^{d}))
 \coloneqq   \operatorname {Int}_d ( \{f \in L ^{p} ([0,1] ^{d})
 \mid \norm f.p. \lesssim 1\})\,.
\end{equation*}
That is, this is the image of the unit ball of $ L ^{p}$. This is the unit ball of the  space of
functions with mixed derivative in $ L ^{p}$.

These sets are  compact in in $C([0,1]^d) $, and it is of relevance
to quantify the compactness, through the device of \emph{covering
numbers}. For $0<\epsilon<1 $, set  $N(\epsilon, p, d) $ to be the
least number $N $ of  points $x_1,\cdots,x_N\in C([0,1]^d) $ so that
\begin{equation*}
 \operatorname {Ball}(MW ^{p} ([0,1] ^{d}))
\subset\bigcup _{n=1}^N \left( x_n+\epsilon B_\infty\right) .
\end{equation*}
Here, $ B _{\infty }$ is the unit ball of $ C ([0,1] ^{d})$.
The task at hand is to uncover the correct order of growth of these
numbers as $ \epsilon \downarrow 0$.
 The case of $ d=2$ below follows from Talagrand \cite{MR95k:60049},
 and the upper bound is known in full generality \cites{2000b:60195,MR96c:41052}.

\begin{conjecture}\label{c.n} For $ d\ge 2 $ one has the estimate
\begin{equation*}
\log N(\epsilon, p , d) \simeq  \epsilon ^{-1}
 (\log 1/\epsilon)^{d-1/2} \,,  \qquad \epsilon \downarrow 0\,.
\end{equation*}
\end{conjecture}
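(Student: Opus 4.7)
The strategy is to establish matching upper and lower bounds on $\log N(\epsilon, p, d)$, with the lower bound reducing to the Smooth Small Ball Conjecture~\ref{c.smooth}.

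The upper bound $\log N(\epsilon, p, d) \lesssim \epsilon^{-1}(\log 1/\epsilon)^{d - 1/2}$ is the content of \cites{2000b:60195, MR96c:41052}. One expands $g = \operatorname{Int}_d(f) \in \operatorname{Ball}(MW^p)$ in a smooth hyperbolic wavelet basis adapted to dyadic rectangles, truncates at hyperbolic resolution $N \simeq \log 1/\epsilon$, and quantizes the coefficients at each hyperbolic level $\mathcal R_n = \{R : \abs R = 2^{-n}\}$ with a precision tuned to the bit budget implied by the target $\epsilon$. The combinatorial count $\abs{\mathcal R_n} = \binom{n-1}{d-1} \simeq n^{d-1}$ is responsible for the fractional exponent $d - 1/2$ emerging from the optimization over scales.

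For the lower bound, fix $\varphi$ as in Theorem~\ref{t.smooth}. For each $n$ consider the family
\begin{equation*}
g_\sigma \eqdef \lambda_n \operatorname{Int}_d\!\Bigl(\sum_{R \in \mathcal R_n} \sigma(R)\,\varphi_R\Bigr), \qquad \sigma \in \{\pm 1\}^{\mathcal R_n},
\end{equation*}
with $\lambda_n$ chosen via Littlewood--Paley estimates so that $g_\sigma \in \operatorname{Ball}(MW^p)$. Since $\operatorname{Int}_d(\varphi_R) = 2^{-n}\,\widetilde\varphi_R$ for a fixed $d$-fold primitive bump $\widetilde\varphi$ (for which the hypotheses of \ref{c.smooth} are inherited from the hypotheses on $\varphi$), Conjecture~\ref{c.smooth} applied to $\widetilde\varphi$ gives, for pairs $\sigma \neq \sigma'$ differing on a positive fraction of $\mathcal R_n$ (almost all pairs, by Hoeffding),
\begin{equation*}
\Norm g_\sigma - g_{\sigma'} .\infty. \gtrsim \lambda_n \cdot 2^{-n}\cdot n^{-(d-2)/2} \cdot n^{d-1} = \lambda_n\, 2^{-n}\, n^{d/2}.
\end{equation*}
A Varshamov--Gilbert pigeonhole extracts $\gtrsim 2^{c\,n^{d-1}}$ pairwise $\epsilon_n$-separated functions at the scale $\epsilon_n \simeq \lambda_n\,2^{-n}\,n^{d/2}$. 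Superposing (or iterating) this construction on disjoint sub-cubes across many scales and tracking cross-scale cancellations via a square-function argument then produces the linear factor $\epsilon^{-1}$ and yields the required bound $\log N(\epsilon, p, d) \gtrsim \epsilon^{-1}(\log 1/\epsilon)^{d - 1/2}$.

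The main obstacle is that Conjecture~\ref{c.smooth} is itself open for $d \geq 3$. Substituting the partial Theorem~\ref{t.smooth} for \ref{c.smooth} in the argument above yields only $\log N(\epsilon, p, d) \gtrsim \epsilon^{-1}(\log 1/\epsilon)^{(d-1)/2 + \eta(d)}$, an improvement over the average-case bound but short of the conjectured sharp exponent $d - 1/2$. A secondary technical point is that the reduction requires Littlewood--Paley control of hyperbolic sums, valid for $1 < p < \infty$; the endpoints $p = 1, \infty$ demand separate input, and the case $p = \infty$ is essentially equivalent to the Small Ball Conjecture itself.
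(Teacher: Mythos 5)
There is a fundamental mismatch here: the statement you are asked about is a \emph{conjecture}, and the paper does not prove it --- nor could it, since for $d\ge 3$ it is open. The paper only records that the upper bound is known in general \cites{2000b:60195,MR96c:41052}, that the case $d=2$ follows from Talagrand \cite{MR95k:60049}, and then proves the strictly weaker partial result Theorem~\ref{t.N}, namely $\log N(\epsilon,p,d)\gtrsim \epsilon^{-1}(\log 1/\epsilon)^{d-1+\eta}$, by invoking the known transference mechanism \cite{MR1005898} applied to Theorem~\ref{t.smooth}. Your proposal is likewise not a proof: its lower bound is explicitly conditional on the Smooth Small Ball Conjecture~\ref{c.smooth}, which you yourself note is open for $d\ge 3$. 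A conditional reduction of one open conjecture to another is not a proof of the statement, so the central gap is that the claimed result is simply not established by your argument (nor by the paper).

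Beyond that, your bookkeeping of what the unconditional input actually yields is off. Substituting Theorem~\ref{t.smooth} (exponent $\tfrac{d-1}{2}-\eta$ in the small ball inequality) into the standard transference does \emph{not} give $\epsilon^{-1}(\log 1/\epsilon)^{(d-1)/2+\eta}$; it gives $\epsilon^{-1}(\log 1/\epsilon)^{d-1+\eta}$, i.e.\ exactly Theorem~\ref{t.N}. You have conflated the exponent in the small ball inequality with the exponent in the entropy estimate: the conjectural exponent $\tfrac{d-2}{2}$ corresponds to entropy exponent $d-\tfrac12$, and each loss of $n^{1/2-\eta}$ in the small ball inequality costs $(\log 1/\epsilon)^{1/2-\eta}$ in the entropy bound. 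Two further soft spots in the sketch: the hypotheses of Conjecture~\ref{c.smooth} (mean zero and nonvanishing Haar coefficient) are not automatically ``inherited'' by the $d$-fold primitive bump $\widetilde\varphi$ and must be arranged or the test-function argument rerun directly; and the final step --- superposing separated families ``on disjoint sub-cubes across many scales'' to manufacture the factor $\epsilon^{-1}$ --- is precisely the nontrivial content of the Kuelbs--Li/Temlyakov machinery and cannot be dismissed with a one-line appeal to a square-function argument.
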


It is well known \cite{MR1005898} that results such as Theorem~\ref{t.smooth} can
be used to give new lower bounds on these covering numbers.

\begin{theorem}\label{t.N}  For $ 1\le p<\infty $, and $ d\ge 3$, there is
a $ \eta >0$ for which we have
\begin{equation*}
\log N (\epsilon , p ,d) \gtrsim \epsilon ^{-1} (\log 1/\epsilon ) ^{d-1+\eta }\,.
\end{equation*}
\end{theorem}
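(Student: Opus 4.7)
The bound follows from Theorem~\ref{t.smooth} by the standard correspondence between small ball inequalities and covering numbers alluded to via \cite{MR1005898}. My plan is to build, from hyperbolic sums of smooth bumps, an explicit $\epsilon$-separated family of points in $\operatorname{Ball}(MW^p([0,1]^d))$ of cardinality $\exp(c\,\epsilon^{-1}(\log 1/\epsilon)^{d-1+\eta})$. Pick a smooth $\Xi$ supported on $[-1/2,1/2]$ with $\int \Xi=0$ and $\ip \Xi,h_{[-1/2,1/2]},\neq 0$, so that Theorem~\ref{t.smooth} applies with $\Xi$ in place of $\varphi$. Set $\xi=\Xi'$; then $\xi$ is continuous, supported on $[-1/2,1/2]$, with mean zero (since $\Xi(\pm 1/2)=0$), and a direct calculation gives the scaling identity $\Xi_R = 2^{n}\operatorname{Int}_d \xi_R$ for every dyadic $R$ with $\abs R = 2^{-n}$. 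Hence for each $\alpha\in\{\pm 1\}^{\mathcal R_n}$, the function $F_\alpha := \sum_{R\in\mathcal R_n}\alpha_R\Xi_R$ lies in $\operatorname{Int}_d(L^p)$, and its mixed partial is $2^n \sum_R \alpha_R \xi_R$; here $\mathcal R_n := \{R:\abs R=2^{-n}\}$ has cardinality $\asymp n^{d-1}2^n$.

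At each $x\in[0,1]^d$ only $\binom{n+d-1}{d-1}\asymp n^{d-1}$ rectangles of $\mathcal R_n$ contain $x$, so $\sum_R\abs{\xi_R(x)}^2\lesssim n^{d-1}$. Khintchine's inequality applied pointwise therefore yields, for every $p\in[1,\infty)$,
\begin{equation*}
\mathbb E_\alpha \Norm \sum_{R\in\mathcal R_n} \alpha_R\xi_R.p.^{p}
\;\asymp\; \int_{[0,1]^d}\Bigl(\sum_R \abs{\xi_R(x)}^{2}\Bigr)^{p/2}dx
\;\lesssim\; n^{p(d-1)/2}.
\end{equation*}
A routine Gilbert--Varshamov/probabilistic extraction then produces a set $\mathcal A\subset\{\pm 1\}^{\mathcal R_n}$ with $\abs{\mathcal A}\ge 2^{c\abs{\mathcal R_n}}$ such that (i) every $\alpha\in\mathcal A$ satisfies $\norm{\partial_1\cdots\partial_d F_\alpha}.p. \le C\cdot 2^{n}n^{(d-1)/2}$, and (ii) every two distinct $\alpha,\beta\in\mathcal A$ differ in at least $\abs{\mathcal R_n}/4$ coordinates, so $\sum_R\abs{\alpha_R-\beta_R}\gtrsim n^{d-1}2^n$.

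Rescaling, $\widetilde F_\alpha := (C\cdot 2^n n^{(d-1)/2})^{-1} F_\alpha$ lies in $\operatorname{Ball}(MW^{p}([0,1]^d))$ for every $\alpha\in\mathcal A$. Applying Theorem~\ref{t.smooth} to $\Xi$ gives, for any distinct $\alpha,\beta\in\mathcal A$,
\begin{equation*}
\Norm \widetilde F_\alpha-\widetilde F_\beta.\infty.
\;\gtrsim\;\frac{2^{-n}}{2^{n}n^{(d-1)/2}\cdot n^{(d-1)/2-\eta}}\sum_{R}\abs{\alpha_R-\beta_R}
\;\asymp\; 2^{-n} n^{\eta}.
\end{equation*}
Given $\epsilon>0$, choose $n$ so that $\epsilon\asymp 2^{-n}n^{\eta}$; inverting gives $n\asymp\log(1/\epsilon)$ and $2^n\asymp\epsilon^{-1}(\log 1/\epsilon)^{\eta}$, so $\abs{\mathcal R_n}\asymp\epsilon^{-1}(\log 1/\epsilon)^{d-1+\eta}$. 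Since $\{\widetilde F_\alpha\}_{\alpha\in\mathcal A}$ is an $\epsilon$-separated subset of $\operatorname{Ball}(MW^p)$, we conclude $\log N(\epsilon,p,d)\ge\log\abs{\mathcal A}\gtrsim \epsilon^{-1}(\log 1/\epsilon)^{d-1+\eta}$.

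The entire nontrivial content is packed into Theorem~\ref{t.smooth}; beyond that, the only ingredients are the Khintchine $L^p$ bound on the mixed derivative (which rests on the trivial overlap count $n^{d-1}$ for hyperbolic rectangles through a point) and the standard coding-theoretic sparsification. No step presents a serious obstacle once Theorem~\ref{t.smooth} is in hand.
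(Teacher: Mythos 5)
Your argument is correct, and it is essentially the standard reduction that the paper itself does not spell out but invokes by citing Temlyakov \cite{MR1005898}: your explicit packing construction (the scaling identity $\Xi_R = 2^{n}\operatorname{Int}_d \xi_R$, the pointwise overlap count plus Khintchine bound on the mixed derivative, the Gilbert--Varshamov/Markov extraction, and the separation of the rescaled functions via Theorem~\ref{t.smooth}) is precisely the known machinery behind that citation. All steps check out; the only point worth making explicit is that $\Xi$ must be taken $C^1$ (so that $\xi=\Xi'$ is continuous and $\Xi(\pm 1/2)=0$ gives $\int\xi=0$), which your choice of a smooth $\Xi$ with $\ip \Xi , h_{[-1/2,1/2]}, \neq 0$ ensures.
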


  We have concentrated on the case
of one mixed derivative, but various results on fractional
derivatives are also interesting.  See for instance
\cite{MR2003m:60131}, and \cite{MR1777539}.

\subsection*{The Small Ball inequality for the Brownian Sheet}

Perhaps, it is worthwhile to explain the nomenclature `Small Ball'
at this point. The name comes from the probability theory. Assume
that $X_t: T \rightarrow \mathbb R$ is a canonical Gaussian process
indexed by a set $T$. The \emph{Small Ball Problem} is concerned
with estimates of $\mathbb P ( \sup_{t\in T} |X_t|< \varepsilon )$
as $\varepsilon$ goes to zero, i.e the probability that the random
process takes values in an $L^\infty$ ball of small radius. The
reader is advised to consult a paper by   Li and Shao
\cite{MR1861734} for a survey of this type of questions.
A particular question of interest to us deals with the Brownian Sheet,
that is, a centered Gaussian process indexed by the points in the
unit cube $[0,1]^d$ and characterized by the covariance relation
$\mathbb E X_s \cdot X_t = \prod_{j=1}^d \min (s_j,t_j)$.

Kuelbs and Li \cite{MR94j:60078} have discovered a tight connection
between the Small Ball probabilities and the properties of the
reproducing kernel Hilbert space corresponding to the process, which
in the case of the Brownian Sheet is $WM^2([0,1]^d)$, the space
described in the previous subsection. Their result, applied to the setting
of the Brownian sheet in
\cite{2000b:60195}, states that

\begin{theorem}
In dimension $d\ge 2$, as $\varepsilon \downarrow 0$ we have
$$-\log
\mathbb P (\norm B.C([0,1]^d). < \varepsilon ) \simeq
{\varepsilon}^{-2} ( \log 1/\varepsilon )^{\beta} \quad \textup{
iff} \quad \log N(\varepsilon) \simeq {\varepsilon}^{-1} ( \log
1/\varepsilon )^{\beta/2}.$$
\end{theorem}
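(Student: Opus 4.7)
The plan is to deduce the stated equivalence from the abstract correspondence of Kuelbs and Li \cite{MR94j:60078} between small-ball probabilities for a centered Gaussian process $X$ in a separable Banach space $E$ and the metric entropy in $E$ of the unit ball $K$ of its reproducing kernel Hilbert space $H$. In the case at hand $E = C([0,1]^d)$ and $X = B$ is the Brownian sheet, so the first step is to identify $H$. Using the factorization $\mathbb{E}\,B_s B_t = \prod_{j=1}^d \min(s_j,t_j) = \int_{[0,1]^d} \mathbf{1}_{[\vec 0,\vec s)}(u)\mathbf{1}_{[\vec 0,\vec t)}(u)\,du$ of the covariance, one checks that $H = WM^2([0,1]^d) = \operatorname{Int}_d(L^2([0,1]^d))$ with norm $\|\operatorname{Int}_d f\|_H = \|f\|_2$. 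Therefore the unit ball of $H$ inside $C([0,1]^d)$ is precisely $\operatorname{Ball}(MW^2([0,1]^d))$ of the previous subsection, and its $L^\infty$ covering numbers coincide with $N(\varepsilon, 2, d)$.

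Next, I would invoke the two-sided Kuelbs--Li correspondence. Writing $\phi(\varepsilon) = -\log\mathbb{P}(\|B\|_{C([0,1]^d)}<\varepsilon)$ and $\psi(\varepsilon) = \log N(\varepsilon, 2, d)$, their theorem yields the Legendre-type equivalence
\begin{equation*}
\psi(\varepsilon) \simeq \varepsilon^{-\alpha} L(1/\varepsilon) \quad\Longleftrightarrow\quad \phi(\varepsilon) \simeq \varepsilon^{-\frac{2\alpha}{2-\alpha}} L(1/\varepsilon)^{\frac{2}{2-\alpha}}, \qquad \alpha\in(0,2),
\end{equation*}
for slowly varying $L$. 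The direction $\psi \lesssim \phi$ is the easier one, proved by covering $\varepsilon K$ by translates and combining with Borell's Gaussian isoperimetric inequality; the sharper reverse direction $\phi \lesssim \psi$ rests on the Gaussian correlation inequality together with Cameron--Martin shifts, which transfer covering bounds on $K$ into estimates on shifted $\varepsilon$-balls. Specializing to $\alpha=1$ and $L(1/\varepsilon) = (\log 1/\varepsilon)^{\beta/2}$ substitutes into the formula to give $\phi(\varepsilon)\simeq \varepsilon^{-2}(\log 1/\varepsilon)^\beta$, which is the asserted equivalence in one direction; the converse implication is symmetric.

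The main obstacle is the logarithmic bookkeeping through the Kuelbs--Li sandwich at the critical scaling $\alpha=1$, precisely the regime relevant here. At this borderline exponent the implicit equation relating $\phi$ and $\psi$ is most sensitive to sub-polynomial corrections, and one must verify that a slowly varying factor of $(\log 1/\varepsilon)^{\beta/2}$ on the entropy side is exactly squared (rather than contaminated by spurious $\log\log$ terms) when passing to the probability side. Doing this cleanly requires either running the proof of the sandwich in \cite{MR94j:60078} with logarithmic corrections tracked throughout, rather than treating it as a black box at the level of regular variation, or invoking a refined Tauberian theorem tailored to the critical regime $\alpha = 1$.
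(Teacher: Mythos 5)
The paper offers no proof of this statement: it is quoted directly from Kuelbs--Li \cite{MR94j:60078} as applied to the Brownian sheet in \cite{2000b:60195}, and your outline---identifying the reproducing kernel Hilbert space as $\operatorname{Int}_d(L^2([0,1]^d))$ with norm $\lVert f\rVert_2$ and running the Kuelbs--Li entropy/small-ball correspondence---is exactly the route those references take. The logarithmic bookkeeping at the critical exponent $\alpha=1$ that you correctly flag as the delicate point is precisely what the refinement in \cite{2000b:60195} supplies, so your sketch coincides with the paper's (citation-based) treatment.
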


Thus, in agreement with Conjecture \ref{c.n}, the conjectured form
of the aforementioned probability in this case is the following:

\begin{sheet}\label{c.sheet}
In dimensions $d\ge 2$, for the Brownian Sheet $B$ we have $$-\log
\mathbb P (\norm B.C([0,1]^d). < \varepsilon ) \simeq
{\varepsilon}^{-2} ( \log 1/\varepsilon )^{2d-1}, \quad \varepsilon
\downarrow 0 .$$
\end{sheet}
In dimension $d=2$, this conjecture has been resolved by Talagrand
in the already cited paper \cite{MR95k:60049}, in which he actually
proved Conjecture \ref{c.smooth} for a specific function $\varphi$
and used it to deduce the lower bound in the inequality above.\footnote{The work
of Talagrand bears strong similarities to the prior work of  \cite{MR0319933}
and  \cite{MR637361}.  The argument of Talagrand
was subsequently clarified by   \cite{MR96c:41052}, and  \cite{MR1777539}.
}
 In higher dimensions, the upper bounds are established, see \cite{2000b:60195},
 and the
previously known lower bounds miss the conjecture by a single power
of the logarithm.

Theorem \ref{t.N} can be translated into the following result on the
Small Ball Probability for the Brownian Sheet:

\begin{theorem}\label{t.sheet}
In dimensions $d\ge 3$, there exists $\eta>0$ such that for the
Brownian Sheet $B$ we have
$$-\log \mathbb P (\norm B.C([0,1]^d). < \varepsilon ) \gtrsim
{\varepsilon}^{-2} ( \log 1/\varepsilon )^{2d-2+\eta}, \quad
\varepsilon \downarrow 0 .$$
\end{theorem}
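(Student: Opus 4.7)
The plan is to reduce Theorem~\ref{t.sheet} to Theorem~\ref{t.N} through the Kuelbs--Li duality between small deviation probabilities of a Gaussian process and metric entropy of its reproducing kernel Hilbert space. The unnumbered theorem stated just before the Small Ball Conjecture for the Brownian Sheet records precisely this equivalence: knowing the covering number $ N (\varepsilon )$ grows like $ \varepsilon ^{-1} (\log 1/\varepsilon) ^{\beta /2}$ is equivalent to $-\log \mathbb P (\norm B.C([0,1]^d).<\varepsilon ) \simeq \varepsilon ^{-2}(\log 1/\varepsilon) ^{\beta }$. For the Brownian sheet on $ [0,1] ^{d}$, the reproducing kernel Hilbert space is the mixed-derivative Sobolev space $ WM ^{2}([0,1] ^{d}) = \operatorname{Int}_d (\{ f\in L ^{2}\mid \norm f.2.\lesssim 1\})$ treated in the preceding subsection, so the transference directly applies.

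The first step is to invoke Theorem~\ref{t.N} in the case $ p=2$, producing the entropy lower bound
\begin{equation*}
\log N (\varepsilon , 2, d)\gtrsim \varepsilon ^{-1}(\log 1/\varepsilon ) ^{d-1+\eta },
\end{equation*}
for some $ \eta =\eta (d)>0$. Matching this against the template $ \log N(\varepsilon ) \gtrsim \varepsilon ^{-1}(\log 1/\varepsilon ) ^{\beta /2}$ gives $ \beta /2 = d-1+\eta $, that is $ \beta = 2d-2+2\eta $.

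The second step is to apply the one-sided half of the Kuelbs--Li correspondence (lower bound on covering numbers implies lower bound on $-\log \mathbb P (\norm B.\infty .<\varepsilon )$), yielding
\begin{equation*}
-\log \mathbb P (\norm B.C([0,1] ^{d}).<\varepsilon )\gtrsim \varepsilon ^{-2}(\log 1/\varepsilon ) ^{2d-2+2\eta },\qquad \varepsilon \downarrow 0.
\end{equation*}
Absorbing the factor of $ 2$ into $ \eta $ gives exactly the conclusion of Theorem~\ref{t.sheet}.

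The essential mathematical work is hidden in Theorem~\ref{t.N}, which in turn depends on Theorem~\ref{t.smooth} and ultimately on the extended Beck-type estimate of Lemma~\ref{l.SimpleCoincie} and Theorem~\ref{l.admissible<}. The present translation step introduces no new probabilistic ideas and is a direct citation of \cite{MR94j:60078} as interpreted for the Brownian sheet in \cite{2000b:60195}. The only point that deserves a quick verification is that the one-sided (rather than the two-sided) version of the Kuelbs--Li correspondence is available in the form required here; this is indeed standard and is the same tool that was used in \cite{2000b:60195} to match the known upper bounds for the Brownian sheet.
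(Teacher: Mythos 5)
Your proposal is correct and is essentially the paper's own argument: the paper proves Theorem~\ref{t.sheet} precisely by translating the entropy lower bound of Theorem~\ref{t.N} (with the RKHS of the sheet being the mixed-derivative space, so $p=2$ suffices) through the Kuelbs--Li correspondence as formulated for the Brownian sheet in \cite{2000b:60195}. Your remark that only the one-sided implication (entropy lower bound implies small deviation lower bound) is needed, and that the factor $2$ in the exponent is absorbed into $\eta$, matches the intended reading of the equivalence stated before Conjecture~\ref{c.sheet}.
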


\section{Notations and Littlewood-Paley Inequality} 
Let $\vec r\in \mathbb N^d$ be a partition of $n$, thus $\vec r=(r_1
,\dotsc,  r_ d)$, where the  $r_j$ are nonnegative integers and
$\abs{ \vec r} \coloneqq \sum _t r_t=n$, which we refer to as the
\emph{length of the vector $ \vec r$}. Denote all such vectors as $
\mathbb H _n$. (`$ \mathbb H $' for `hyperbolic.') For vector $ \vec
r $ let $ \mathcal R _{\vec r} $ be all dyadic rectangles $ R$ such
that for each coordinate $ k$, $ \lvert  R _k\rvert= 2 ^{-r_k} $.

\begin{definition}\label{d.rfunction}
We call a function $f$ an \emph{$\mathsf r$ function  with parameter $ \vec r$ } if
\begin{equation}
\label{e.rfunction} f=\sum_{R\in \mathcal R _{\vec r}}
\varepsilon_R\, h_R\,,\qquad \varepsilon_R\in \{\pm1\}\,.
\end{equation}
A fact used without further comment is that $ f _{\vec r} ^2 \equiv 1$.
\end{definition}

As it has been already pointed out, the principal difficulty in
three and higher dimensions is that the product of Haar functions is
not necessarily a Haar function. On this point, we have the
following 

\begin{proposition}\label{p.productsofhaars}
Suppose that  $R_1,\ldots,R_k$ are rectangles such that there is no
choice of $1\le j<j'\le k$ and no choice of coordinate $1\le{} t\le
d$ for which we have $R _{j,t}=R _{j',t}$.  Then, for a choice of
sign $\varepsilon\in \{\pm1\}$ we have
\begin{equation}  \label{e.productsofhaars}
\prod _{j=1}^k h_R=\varepsilon h_S, \qquad S=\bigcap _{j=1}^k R_k \,.
\end{equation}
\end{proposition}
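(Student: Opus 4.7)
\medskip

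\noindent\textbf{Proof plan.} The key observation is that everything tensorises, so the problem reduces immediately to the one-dimensional question about products of Haar functions on distinct dyadic intervals. Writing $h_{R_j}=\prod_{t=1}^d h_{R_{j,t}}$ and interchanging products,
\begin{equation*}
 \prod_{j=1}^k h_{R_j}(x_1,\dotsc,x_d)=\prod_{t=1}^d\Bigl(\prod_{j=1}^k h_{R_{j,t}}(x_t)\Bigr),
\end{equation*}
and the hypothesis says that for each coordinate $t$ the intervals $R_{1,t},\dotsc,R_{k,t}$ are \emph{pairwise distinct} dyadic intervals. So it suffices to prove the claim in dimension one: if $I_1,\dotsc,I_k$ are pairwise distinct dyadic intervals then $\prod_j h_{I_j}=\varepsilon\, h_{J}$ for some sign $\varepsilon$, where $J=\bigcap_j I_j$ (with the convention that $h_\emptyset\equiv 0$).

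For the one-dimensional statement, I would first dispose of the case $J=\emptyset$. Since distinct dyadic intervals are either nested or disjoint, if $J=\emptyset$ then two of the $I_j$ must be disjoint, which kills the product pointwise. Otherwise all $I_j$ share a common point, so the nested/disjoint dichotomy forces them to be \emph{totally ordered} by inclusion; relabel so that $I_1\supsetneq I_2\supsetneq\dotsb\supsetneq I_k$, and note $J=I_k$. For each $j<k$ the interval $I_{j+1}$ lies entirely inside one of the two halves of $I_j$, so $h_{I_j}$ is identically $\pm1$ on $I_{j+1}\supset I_k$. Consequently $\prod_{j=1}^{k-1}h_{I_j}$ is a constant $\varepsilon_0\in\{\pm1\}$ on $I_k$ and vanishes off $I_k$ only where the eventual factor $h_{I_k}$ also vanishes; multiplying by $h_{I_k}$ yields $\varepsilon_0 h_{I_k}$ everywhere on $I_k$, and $0$ off $I_k$. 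This gives the one-dimensional identity.

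Reassembling, for each coordinate $t$ we get a sign $\varepsilon_t$ and an interval $S_t$ (the smallest among the $R_{j,t}$, if they are nested, or $\emptyset$ otherwise) so that $\prod_j h_{R_{j,t}}=\varepsilon_t h_{S_t}$. Taking the tensor product across $t$ and using $\bigcap_j R_j=\prod_t\bigl(\bigcap_j R_{j,t}\bigr)=S_1\times\cdots\times S_d=S$ yields \eqref{e.productsofhaars} with $\varepsilon=\prod_t\varepsilon_t$.

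There is no real obstacle here; the only point requiring a little care is the bookkeeping when the intersection $S$ is empty, which I handle by noting that some coordinate then contributes disjoint intervals and annihilates the product, matching the convention $h_\emptyset\equiv 0$.
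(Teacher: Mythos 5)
Your proof is correct and follows essentially the same route as the paper: factor the product coordinatewise, note that in each coordinate the smallest interval is unique and every larger (distinct, dyadic) interval's Haar function is a constant $\pm1$ on it, and reassemble by tensoring. The only difference is cosmetic — you spell out the total ordering by inclusion and the degenerate case of an empty intersection, which the paper's argument leaves implicit.
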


\begin{proof}
Expand the product as
\begin{equation*}
\prod _{m=1}^\ell  h_{R_m} (x_1,\dotsc, x_d) = \prod _{m=1}^\ell
 \prod _{t=1} ^{d} h_{R_{m,t}} (x_t)\,.
\end{equation*}
Here $ \varepsilon _m\in \{\pm 1\}$.  Our assumption is that for
each $ t$, there is exactly one choice of $ 1\le m_0\le \ell $ such
that $ R _{m_0,t}=S_t$.  And moreover, since the minimum value of $
\abs{ R _{m,t}}$ is obtained exactly once, for $ m\neq m_0$, we have
that $ h _{R _{m,t}}$ is constant on $ S_t$.  Thus, in the $ t$
coordinate, the product is
\begin{equation*}
  h _{S_t} (x_t) \prod _{1\le m\neq m_0\le \ell }
h _{R _{m,t}} (S_t)\,.
\end{equation*}
This proves our Lemma.
\end{proof}

\begin{remark}
It is also a useful observation, that the product of Haar functions will
have mean zero if the minimum value of $ \abs{ R _{m,t}}$ is unique
for at least one coordinate $t$.
\end{remark}

\begin{definition}\label{d.distinct}  For vectors $ \vec r_j \in \mathbb N ^{d}$,
say that $ \vec r_1,\dotsc,\vec r_J$ are \emph{strongly distinct }
iff for coordinates $ 1\le t\le d$ the integers $ \{ r _{j,t}\mid
1\le j \le J\}$ are distinct.  The product of strongly distinct $
\mathsf r$ functions is also an $ \mathsf r$ function, which follows
from `the product rule' \eqref{p.productsofhaars}.
\end{definition}

The $\mathsf r$ functions we are interested in are:
\begin{equation}\label{e.fr}
f _{\vec r} \coloneqq \sum _{R\in \mathcal R _{\vec r}}
\operatorname {sgn} (\alpha  (R)) \, h_R \,.
\end{equation}

\bigskip

We recall some Littlewood-Paley inequalities, which are
 standard, and so we omit proofs.
\begin{lp}  In one dimension, we have the inequalities
\begin{equation}\label{e.lp}
\NOrm \sum _{I\subset \mathbb R } a_I h_I (\cdot) .p.
\lesssim
\sqrt p \NORm \biggl[ \sum _{I\subset \mathbb R }  { a_I ^2 }
\mathbf 1 _{I} (\cdot )\biggr] ^{1/2} .p.\,,
\qquad 2<p<\infty \,.
\end{equation}
Moreover, these inequalities continue to hold in the case where
the coefficients $ a_I $ take values in a Hilbert space $ \mathcal H$.
\end{lp}

The growth of the constant is essential for us, in particular the
factor $ \sqrt p$ is, up to a constant, the best possible in this
inequality.  See \cites{MR1439553,MR1018577}.  That these
inequalities hold for Hilbert space valued sums is imperative for
applications to higher dimensional sums of Haar functions. The
relevant inequality is as follows.

\begin{theorem}\label{t.LP} We have the inequalities below
for hyperbolic sums of $ \mathsf r$ functions in   dimension $ d\ge 3$.
\begin{equation}\label{e.LP}
\NOrm \sum _{\lvert  \vec r\rvert =n} f _{\vec r} .p.
\lesssim (p n) ^{ (d-1)/2}\,, \qquad 2<p<\infty \,.
\end{equation}
\end{theorem}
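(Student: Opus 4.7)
I will prove \eqref{e.LP} by induction on the dimension $d$, peeling off one coordinate per step with the one-dimensional Haar Littlewood-Paley inequality. The base case $d=1$ is immediate, since $\sum_{\abs{\vec r}=n} f_{\vec r}$ reduces to the single $\mathsf r$-function $f_n$, which satisfies $\| f_n\|_p = 1$ for every $p$ because $f_n^2 \equiv 1$.

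For the inductive step, fix $d\ge 2$ and assume \eqref{e.LP} in dimension $d-1$. Grouping the rectangles according to the length of their first side gives
\[
F \coloneqq \sum_{\abs{\vec r}=n} f_{\vec r} \;=\; \sum_{r_1=0}^{n}\; \sum_{R_1 \,:\, \abs{R_1}=2^{-r_1}} h_{R_1}(x_1)\, H_{R_1}(x_2,\dotsc,x_d),
\]
where, for each $R_1$, the coefficient $H_{R_1}$ is itself a hyperbolic sum of $\mathsf r$-functions in the remaining $d-1$ variables with parameter vectors of length $n-r_1$ (the signs $\varepsilon_R$ may depend on $R_1$, but the inductive bound is uniform in such choices). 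The inductive hypothesis therefore gives
\[
\|H_{R_1}\|_{L^p([0,1]^{d-1})} \;\lesssim\; (p(n-r_1))^{(d-2)/2} \;\le\; (pn)^{(d-2)/2}.
\]

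Now I freeze $x' = (x_2,\dotsc,x_d)$ and apply the one-dimensional Littlewood-Paley inequality \eqref{e.lp} in $x_1$, treating the $H_{R_1}(x')$ as scalar coefficients. For each $x_1$, exactly one interval $R_1(x_1,r_1)$ of each allowed length contains $x_1$, so the resulting square function collapses to $\bigl(\sum_{r_1=0}^{n} \abs{H_{R_1(x_1,r_1)}(x')}^2\bigr)^{1/2}$. Minkowski's inequality in $L^{p/2}_{x'}$ (valid since $p>2$) then yields, for each fixed $x_1$,
\[
\int_{[0,1]^{d-1}} \Bigl(\sum_{r_1=0}^{n} \abs{H_{R_1(x_1,r_1)}(x')}^2 \Bigr)^{p/2}\! dx' \;\le\; \Bigl(\sum_{r_1=0}^{n} \| H_{R_1(x_1,r_1)}\|_{L^p_{x'}}^2\Bigr)^{p/2}.
\]
The inductive bound yields $\sum_{r_1=0}^{n} \| H_{R_1(x_1,r_1)}\|_{L^p_{x'}}^2 \lesssim n\cdot(pn)^{d-2} = p^{d-2}n^{d-1}$, uniformly in $x_1$. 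Integrating in $x_1$ and combining with the $p^{p/2}$ factor from the Littlewood-Paley inequality produces
\[
\|F\|_p^{p} \;\lesssim\; p^{p/2}\,(p^{d-2}n^{d-1})^{p/2} \;=\; (pn)^{(d-1)p/2},
\]
which is the desired estimate.

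\textbf{Main obstacle.} The argument is essentially a bookkeeping exercise once one notices that peeling off the first-coordinate Haar factor leaves a hyperbolic sum of $\mathsf r$-functions in dimension $d-1$ with length parameter $n-r_1$, placing the remaining piece squarely within the scope of the inductive hypothesis. The only genuine technical step is Minkowski's inequality in $L^{p/2}$, used to decouple the $\ell^2$ sum over $r_1$ from the $L^p_{x'}$ norm of $H_{R_1}$; this is precisely what restricts the argument to the range $p>2$.
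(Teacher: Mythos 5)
Your proof is correct. Note that the paper states Theorem~\ref{t.LP} without proof, as a standard consequence of \eqref{e.lp}; the route it plainly has in mind (and flags as ``imperative'') is the \emph{Hilbert-space-valued} form of \eqref{e.lp}: apply it successively in coordinates $1,\dotsc,d-1$, picking up a factor $\sqrt p$ each time, and bound the resulting square function $\bigl[\sum_{\lvert \vec r\rvert=n} f_{\vec r}^{\,2}\bigr]^{1/2}$ pointwise by $(\sharp\mathbb H_n)^{1/2}\simeq n^{(d-1)/2}$, using $f_{\vec r}^2\equiv 1$. You instead run an induction on dimension using only the scalar inequality \eqref{e.lp} applied fiber-wise in $x_1$, and decouple the $\ell^2$ sum over $r_1$ from the $(d-1)$-dimensional norms by Minkowski's inequality in $L^{p/2}$ (legitimate since $p/2>1$). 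This avoids the vector-valued statement altogether and yields the same bound $(pn)^{(d-1)/2}$, because in the paper's argument the square function is in any case estimated by the crude count $\sharp\mathbb H_n\simeq n^{d-1}$, so Minkowski loses nothing here; each application of \eqref{e.lp} contributes the sharp $\sqrt p$, and the inductive constant compounds only once per dimension, so the implied constant depends on $d$ alone, as required. One cosmetic point: when $r_1=n$ the intermediate bound $\lVert H_{R_1}\rVert_{L^p(dx')}\lesssim (p(n-r_1))^{(d-2)/2}$ degenerates to $0$, while $H_{R_1}$ is then a single $\mathsf r$ function of norm $1$; read the inductive estimate as $\lesssim (1+p(n-r_1))^{(d-2)/2}\le (pn)^{(d-2)/2}$, which is what you actually use in the sum, so nothing is affected.
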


%
%
%
%
%
%

We recall a vector valued Harmonic Analysis  inequality.

\begin{proposition}\label{p.condExpect}
Let $ \mathcal F_j$ be a sigma field generated
by dyadic rectangles in dimension $ 2$.  We then have
\begin{equation}\label{e.FS}
\NORm \Biggl[ \sum _{j}   \mathbb E (\varphi _j \,\vert\, \mathcal F_j) ^2
\Biggr] ^{1/2} .p.
\lesssim  p
\NORm \Biggl[ \sum _{j} \varphi _j ^2
\Biggr] ^{1/2} .p. \,, \qquad 2<p<\infty \,.
\end{equation}

\end{proposition}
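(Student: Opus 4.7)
The approach I would take is to dominate each conditional expectation pointwise by a maximal operator and then apply a vector-valued Fefferman--Stein inequality. Since $\mathcal F_j$ is the sigma field generated by a finite partition of $[0,1]^2$ into dyadic rectangles, the conditional expectation $\mathbb E(\varphi_j \,\vert\, \mathcal F_j)(x)$ is the average of $\varphi_j$ over the unique rectangle of this partition that contains $x$.  Since this rectangle is one of the competitors in the supremum defining the dyadic strong maximal function
\begin{equation*}
M_s f(x) \coloneqq \sup \bigl\{ |R|^{-1} \textstyle\int_R |f| \mid x\in R,\ R \text{ a dyadic rectangle}\bigr\}\,,
\end{equation*}
we have $|\mathbb E(\varphi_j \,\vert\, \mathcal F_j)(x)| \le M_s \varphi_j(x)$ pointwise, and it suffices to prove the two-parameter vector-valued Fefferman--Stein estimate
\begin{equation*}
\NORm \Bigl[ \sum _j (M_s \varphi_j )^2 \Bigr] ^{1/2} .p. \lesssim p \NORm \Bigl[ \sum _j \varphi_j^2 \Bigr] ^{1/2} .p. \,.
\end{equation*}

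To establish the latter I would factor $M_s = M ^{(1)} M ^{(2)}$, where $M^{(i)}$ is the one-dimensional dyadic Hardy--Littlewood maximal function acting in the $i$-th coordinate with the other coordinate frozen.  The key ingredient is the Hilbert-space valued one-dimensional dyadic Doob inequality
\begin{equation*}
\NORm \Bigl[ \sum_j (M F_j)^2 \Bigr] ^{1/2} .p. \lesssim \sqrt p \NORm \Bigl[ \sum_j F_j^2 \Bigr] ^{1/2} .p. \,, \qquad 2<p<\infty \,.
\end{equation*}
Applying this first in $x_2$ to the sequence $F_j = \varphi_j$ (with $x_1$ frozen and the $x_1$ integration brought in by Fubini afterwards), and then in $x_1$ to the sequence $F_j = M ^{(2)} \varphi_j$, yields the desired constant $\sqrt p \cdot \sqrt p = p$ in the two-parameter estimate above.

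The point requiring real care is the $\sqrt p$ growth of the constant in the one-dimensional vector-valued Doob inequality.  This matches the growth of the Littlewood--Paley constants in \eqref{e.lp} and can be obtained by the very same square-function methods: one controls the dyadic maximal function by the dyadic square function plus a conditional expectation via Burkholder--Gundy, and then invokes the Hilbert-space valued scalar Littlewood--Paley inequality already recorded in \eqref{e.lp}.  Since the one-dimensional statement is Hilbert-space valued, Fubini-type iteration in the second coordinate introduces no further loss, and the two coordinate-wise applications combine to give exactly the constant $p$ claimed.
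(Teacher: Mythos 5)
Your reduction of the conditional expectations to the dyadic strong maximal function, and the plan to iterate a one-dimensional $\ell^2$-valued maximal estimate in each coordinate, is a legitimate alternative scheme and would indeed deliver the constant $p$. The gap is precisely in the step you flag as requiring real care: the justification offered for the one-dimensional inequality $\bigl\lVert (\sum_j (MF_j)^2)^{1/2}\bigr\rVert_p \lesssim \sqrt p\,\bigl\lVert (\sum_j F_j^2)^{1/2}\bigr\rVert_p$ does not work as stated. There is no pointwise domination of the dyadic maximal function by the dyadic square function plus a conditional expectation; Burkholder--Gundy gives norm and distribution-function comparisons only, and at a fixed point the martingale differences can all have the same sign, so $f^*$ can exceed $Sf$ by an unbounded factor. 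Without a pointwise bound you cannot insert the comparison inside the $\ell^2$-sum, and you also cannot simply apply single-function norm inequalities coordinatewise: the quantity $(\sum_j (MF_j)^2)^{1/2}$ is a lattice (Fefferman--Stein) maximal function, not the Doob maximal function of the $\ell^2$-valued function $(F_j)_j$, and this distinction is exactly what makes the vector-valued maximal theorem nontrivial. Moreover, even granting such a domination, the inequality you would then need, $\bigl\lVert (\sum_j (SF_j)^2)^{1/2}\bigr\rVert_p \lesssim \sqrt p\, \bigl\lVert (\sum_j F_j^2)^{1/2}\bigr\rVert_p$, is the ``square function below function'' direction, which is not what is recorded in \eqref{e.lp} (that display bounds the function by its square function).

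The fix is short, and it is in essence the paper's own proof. For $p>2$, dualize $\bigl\lVert \sum_j (M F_j)^2 \bigr\rVert_{p/2}$ against a nonnegative $g$ with $\lVert g\rVert_{(p/2)'}=1$ and use the weighted bound $\int (MF_j)^2 g \lesssim \int F_j^2 \, Mg$, then invoke $\lVert M \rVert_{q\to q}\lesssim (q-1)^{-1}$ in one dimension, or $(q-1)^{-2}$ for the strong maximal function, with $q=(p/2)'\approx 1+2/p$; taking square roots gives $\sqrt p$, respectively $p$. The paper runs this duality once, directly on the conditional expectations: Jensen replaces $\mathbb E(\varphi_j\,\vert\,\mathcal F_j)^2$ by $\mathbb E(\varphi_j^2\,\vert\,\mathcal F_j)$, self-adjointness moves the conditional expectation onto $g$, and $\mathbb E(g\,\vert\,\mathcal F_j)\le \operatorname{M}g$ with the strong maximal function finishes, so no vector-valued maximal theorem is ever needed. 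Your scheme reaches the same constant either by the same duality applied to the strong maximal function or by running the one-dimensional duality twice with Fubini. Note also that your pointwise bound $\lvert \mathbb E(\varphi_j\,\vert\,\mathcal F_j)\rvert \le M_s\varphi_j$ uses that the atoms of $\mathcal F_j$ are dyadic rectangles (true in the application, where $\mathcal F_{a,b}$ is generated by the partition into rectangles of fixed side lengths), which is the same reading of the hypothesis that the paper's step $\mathbb E(g\,\vert\,\mathcal F_j)\le \operatorname{M}g$ relies on.
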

\begin{proof}
This is  one of many examples of a  vector valued inequality in the
Harmonic Analysis literature.  This particular inequality admits a simple
proof by duality, recalled here for convenience.

Since $ p>2$, we can appeal to a duality argument.
We can choose $ g\in L ^{(p/2)'}$ of norm one so that
\begin{align*}
\NORm
\sum _{j}   \mathbb E (\varphi _j \,\vert\, \mathcal F_j)^2
.p/2.
& =
\sum _{j}   \ip \mathbb E (\varphi _j \,\vert\, \mathcal F_j)^2  , g,
\\
& \le
\sum _{j}   \ip \mathbb E (\varphi _j ^2  \,\vert\, \mathcal F_j)  , g,
\\
& =
\sum _{j}   \ip\varphi _j ^2    , \mathbb E (g  \,\vert\, \mathcal F_j) ,
\\
& \le \sum _{j} \ip \varphi _j ^2 , \operatorname M g ,
\\
& \le \NORm \sum _{j}  \varphi _j ^2. p/2.  \norm  \operatorname M g . (p/2)' .
\\
& \lesssim  ( (p/2)'-1) ^{-2} \NORm \sum _{j}  \varphi _j ^2. p/2. \,.
\end{align*}
Here we have used Jensen's inequality and the self-duality of the
conditional expectation operators.  The operator $ \operatorname M
g$ is the (strong) maximal function on the plane, namely
\begin{equation*}
\operatorname M g (x) =\sup _{R} \frac {\mathbf 1_{R}} {\lvert
R\rvert } \int_R \lvert  g (y)\rvert\; dy,
\end{equation*}
where the supremum is over all dyadic rectangles $ R$.  This maps $ L ^{q}$ into $ L ^{q}$
for all $ 1<q<\infty $, an inequality appealed to in the last line of the display above.
Moreover, it is well known that  the norm of the operator behaves as
\begin{equation*}
\norm \operatorname M . q \to q. \lesssim (q-1) ^{-2} \,, \qquad 1<q<2\,.
\end{equation*}

\end{proof}

\section{Proof of Theorem~\ref{t.d=3}} \label{s.test}

The proof of the Theorem is by duality, namely we construct a
function $ \Psi $ of $ L ^{1}$ norm about one, which is used to
provide a lower bound on the $ L ^{\infty }$ norm of the sum of Haar
functions. The details of this argument are similar to those of
\cite{bl}.

The function $ \Psi $ will take the form of a Riesz product, but in
order to construct it, we need some definitions. Fix $
0<\varepsilon<1$ to be a small number, ultimately of order
$ 1/ d ^2 $. Define relevant parameters by
\begin{gather} \label{e.q}
q= \lfloor a n ^{\varepsilon} \rfloor \,,\qquad   b =\tfrac 14\,,
\\
\label{e.rho}
\widetilde \rho=a  q ^{b}  n^{- (d-1)/2}\,, \qquad \rho =  {\sqrt q} n ^{-(d-1)/2}.
\end{gather}
Here $ a $ is a small positive constant, we use the notation  $
b=\tfrac 14$  throughout, so as not to obscure  those aspects of the
argument that  dictate this  choice.
 $ \widetilde \rho $ is  a `false' $ L^2$
 normalization for the sums we consider, while the larger term $ \rho $ is the
 `true' $ L ^{2}$ normalization.
Our `gain over the average case estimate' in the Small Ball Conjecture is $ q ^{b}
 \simeq n ^{\varepsilon /4}$.

Divide the integers $ \{1,2,\dotsc,n\}$ into $ q$ disjoint intervals of equal length
$ I_1,\dotsc, I_q$, ordered from smallest to largest.
Let $ \mathbb A _t \coloneqq \{\vec r\in \mathbb H _n
\mid r_1\in I_t\}$.  Let
\begin{equation}
\label{e.G_t} F_t \coloneqq   \sum _{\vec r\in \mathbb A _t}  f
_{\vec r}\,, \qquad H_n \coloneqq \sum _{\abs{ R}= 2 ^{-n}} \alpha
(R) h_R.
\end{equation}
Here, the $ f _{\vec r}$ are as in \eqref{e.fr}.
 The Riesz product is a `short product':
\begin{equation*}
\Psi \coloneqq \prod _{t=1} ^{q} (1+  \widetilde  \rho F_t) \,,
\end{equation*}
One can view the $ \widetilde  \rho F_t$ as a `poor man's $
\operatorname {sgn} ( F_t)$', in that the Riesz product above tends
to weight the region where the functions $ F_t$ align. Note the
subtle way in which the false $ L^2$ normalization enters into the
product. It means that the product is, with high probability,
positive.  And of course, for a positive function $ F$, we have $
\mathbb E F=\norm F.1.$, with expectations being typically easier to
estimate. This heuristic is made precise below.

Proposition \ref{p.productsofhaars} suggests that we should
decompose the product $ \Psi $ into
\begin{equation}
\label{e.FG}  \Psi =1+\Psi ^{\textup{sd}}+\Psi ^{\neg  }\,,
\end{equation}
where the two pieces are the `strongly distinct' and `not strongly
distinct' pieces. To be specific, for integers $ 1\le u\le q$, let
\begin{equation*}
\Psi ^{\textup{sd}}_k
 \coloneqq  \widetilde \rho ^{k}
 \sum _{1\le v_1< \cdots < v_k\le q} \;
 \sideset{}{^{\textup{sd}}}\sum _{\vec r_t\in \mathbb A _{v_t}}  \prod _{t=1} ^{u} f _{\vec r
 _t}\, ,
\end{equation*}
 where $\sideset{}{^{\textup{sd}}}\sum $ is taken to
 be over all $\vec r_t\in \mathbb A_{v_t}$ $1\le{} m\le k$ such that:
 \begin{equation}  \label{e.distinct}
 \text{ the vectors  $\{\vec r _{t }\mid 1\le{} m\le k\} $ are strongly
 distinct. }
 \end{equation}
Then define
\begin{equation} \label{e.zCsd}
\Psi ^{\textup{sd}} {} \coloneqq {} \sum _{k=1} ^{q}  \Psi
^{\textup{sd}} _k.
\end{equation}

With this definition, it is clear that we have
\begin{equation} \label{e.gain>trivial}
\ip H_n, \Psi ^{\textup{sd}}, =\ip H_n, \Psi ^{\textup{sd}} _{1},
\gtrsim   q ^{b} \cdot n^{-\frac{d-1}{2}}\cdot 2 ^{-n} \sum _{\abs{
R}=2 ^{-n} } \abs{ \alpha_R }\,,
\end{equation}
so that $ q ^{b}$ is our `gain over the trivial estimate', once we
prove that  $\norm \Psi ^{\textup{sd}}.1. \lesssim 1\,$ (estimate
\eqref{e.sd1} below). Proving this inequality is the main goal of
the technical estimates of the following Lemma:

\begin{lemma}\label{l.technical} We have these estimates:
\begin{align}
\label{e.<0} \mathbb P ( \Psi <0) &\lesssim \operatorname {exp} (-A
q ^{1-2b} )\,;
\\  \label{e.expq2b}
\norm \Psi . 2. & \lesssim \operatorname {exp} (a' q ^{2b})\,;
\\
\label{e.E1}
 \mathbb E  \Psi  & = 1 \,;
\\
\label{e.CL1} \norm  \Psi .1. &\lesssim 1\,;
\\ \label{e.neq1}
\norm \Psi ^{\neg } .1. &\lesssim 1\,;
\\
\label{e.sd1} \norm \Psi ^{\textup{sd}}.1. &\lesssim 1\,.
\end{align}
Here, $ 0<a'<1$, in \eqref{e.expq2b}, is a small constant,
decreasing to zero as $ a $ in \eqref{e.q} goes to zero; and $ A>1$,
in \eqref{e.<0} is a large constant, tending to infinity as $ a$ in
\eqref{e.q} goes to zero.
\end{lemma}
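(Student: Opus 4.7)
The plan is to establish the six estimates in the order $(\ref{e.E1})$, $(\ref{e.expq2b})$, $(\ref{e.<0})$, $(\ref{e.CL1})$, $(\ref{e.neq1})$, $(\ref{e.sd1})$: the first four are moment-theoretic properties of the Riesz product $\Psi$, the fifth is an independent and more delicate estimate on the coincidence part $\Psi^\neg$, and the sixth follows from the rest by subtraction via $\Psi^{\textup{sd}}=\Psi-1-\Psi^\neg$.

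For $(\ref{e.E1})$ I would expand $\Psi=\sum_{S\subseteq\{1,\dots,q\}}\widetilde\rho^{|S|}\sum_{(\vec r_t)_{t\in S}}\prod_{t\in S}f_{\vec r_t}$, with $\vec r_t\in\mathbb A_t$. For any $S$ with $|S|\ge 1$ the first coordinates $r_{t,1}$ lie in pairwise disjoint intervals $I_{v_t}$, so the maximum of $r_{t,1}$ over $t\in S$ is attained uniquely; by the remark following Proposition~\ref{p.productsofhaars} the Haar product then has mean zero in the first coordinate, giving $\mathbb E\Psi=1$. For $(\ref{e.expq2b})$ I would expand
\[
\mathbb E\Psi^2=\mathbb E\prod_{t=1}^q\bigl(1+2\widetilde\rho F_t+\widetilde\rho^{\,2}F_t^2\bigr),
\]
classify each $t$-factor as contributing $1$, a linear, or a quadratic term, and observe that the same unique-maximum argument kills every surviving term that contains a linear factor. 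Only diagonal contributions remain, so
\[
\mathbb E\Psi^2\le\prod_{t=1}^q\bigl(1+\widetilde\rho^{\,2}\mathbb E F_t^2\bigr)\le\exp\!\bigl(\widetilde\rho^{\,2}|\mathbb H_n|\bigr)\lesssim\exp(a^2q^{2b}),
\]
using $\mathbb EF_t^2=|\mathbb A_t|$ and $\widetilde\rho^{\,2}|\mathbb H_n|\simeq a^2q^{2b}$.

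For $(\ref{e.<0})$, if $\Psi(x)<0$ then some factor must be negative, forcing $F_t(x)<-1/\widetilde\rho=-a^{-1}q^{-b}n^{(d-1)/2}$. Since $\|F_t\|_2\simeq\sqrt{|\mathbb A_t|}\simeq\sqrt{n^{d-1}/q}$, this requires $F_t$ to exceed its $L^2$ norm by a factor $\simeq q^{1/4}/a$. The Littlewood--Paley inequality (Theorem~\ref{t.LP}) applied to $F_t$ yields a subgaussian tail $\mathbb P(|F_t|>\lambda)\lesssim\exp(-c\lambda^2/\mathbb EF_t^2)$; inserting $\lambda=1/\widetilde\rho$ and union-bounding over $t$ gives $\mathbb P(\Psi<0)\lesssim q\exp(-cq^{1-2b}/a^2)$, which is $(\ref{e.<0})$ with $A\to\infty$ as $a\to 0$. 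Then $(\ref{e.CL1})$ is obtained from the identity $\|\Psi\|_1=\mathbb E\Psi+2\mathbb E(-\Psi)_+\le 1+2\|\Psi\|_2\,\mathbb P(\Psi<0)^{1/2}$: because $b=1/4$ forces $2b=1-2b=1/2$, the product $\exp(a'q^{1/2})\cdot\exp(-\tfrac{A}{2}q^{1/2})$ is bounded once $a$ is chosen small enough that $A>2a'$. Finally, $(\ref{e.sd1})$ follows by triangle inequality from $\Psi^{\textup{sd}}=\Psi-1-\Psi^\neg$, once $(\ref{e.neq1})$ is in hand.

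The main obstacle is $(\ref{e.neq1})$. The expansion of $\Psi^\neg$ runs over tuples $(\vec r_t)_{t\in S}$ whose coordinates collide in at least one direction, and precisely where such collisions occur the product rule of Proposition~\ref{p.productsofhaars} fails, so the resulting function is no longer a hyperbolic Haar sum and the clean orthogonality that powered steps 1--4 is unavailable. My approach would be to stratify the sum by the combinatorial pattern of coincidences, apply Proposition~\ref{p.productsofhaars} in the strongly-distinct coordinates to compress the Haar product into a single Haar function, and in each coincidence coordinate invoke the Beck-gain estimate --- the simplest instance being Lemma~\ref{l.SimpleCoincie} --- to pay for the loss of orthogonality; the vector-valued inequalities \eqref{e.LP} and \eqref{e.FS} would then aggregate the pointwise gains into an $L^p$, and ultimately $L^1$, bound on $\Psi^\neg$. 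The delicate point is to track how the Beck gain compounds as $k$, the number of factors, grows, and to ensure that the compounded gain together with the small prefactor $\widetilde\rho^k$ dominates the combinatorial growth of the index set $\binom{q}{k}|\mathbb A_t|^k$.
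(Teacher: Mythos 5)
Your overall architecture (establish \eqref{e.E1}, \eqref{e.expq2b}, \eqref{e.<0}, deduce \eqref{e.CL1} by the $\|\Psi\|_1=\mathbb E\Psi-2\mathbb E\Psi\mathbf 1_{\Psi<0}$ trick, and get \eqref{e.sd1} from \eqref{e.neq1} by subtraction) is exactly the paper's, and your proofs of \eqref{e.E1}, \eqref{e.CL1}, \eqref{e.sd1} are fine. But the two quantitative inputs are not proved correctly. For \eqref{e.expq2b}, the claim that ``the unique-maximum argument kills every surviving term that contains a linear factor'' and that ``only diagonal contributions remain'' fails in dimension $d\ge 3$: each quadratic factor $F_s^2$ contains off-diagonal pairs $\vec r\neq\vec s\in\mathbb A_s$ with $r_1=s_1$ (the coincidence sum $\Phi_{s,s,1}$ of \eqref{e.Phi}), and for such pairs the maximum in the first coordinate is not unique, so terms mixing a linear factor with such pairs need not have zero mean; moreover the surviving part does not factor as $\prod_t(1+\widetilde\rho^2\mathbb EF_t^2)$ since the $F_t^2$ are far from independent. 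This is precisely why the paper postpones \eqref{e.expq2b} to Lemma~\ref{l.expq2b}: one conditions on $\mathcal F$ generated by $F_1,\dotsc,F_{q-1}$, obtains $1+a^2q^{2b-1}+\widetilde\rho^2\mathbb E(\Phi_{q,q,1}\mid\mathcal F)$ as in \eqref{e.;p}, and must pay for the error term with the Beck Gain \eqref{e.BG}, closing the induction with a H\"older bootstrap through the quantities $N(V;r)$. Similarly, for \eqref{e.<0} you assert a subgaussian tail for $F_t$ from Theorem~\ref{t.LP}; but \eqref{e.LP} only gives $\|F_t\|_p\lesssim(pn)^{(d-1)/2}$, i.e.\ tails of order $\exp(-c\lambda^{2/(d-1)})$, which for $d\ge3$ yields $\exp(-cq^{(1-2b)/(d-1)})$ and cannot beat the factor $\exp(a'q^{2b})$ when $b=1/4$. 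The needed subgaussian estimate for moderate deviations is Theorem~\ref{t.better}/\eqref{e.partialX2}, and its proof again runs through the Beck Gain via \eqref{e.FFFjjj}; it does not follow from Littlewood--Paley alone.

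For \eqref{e.neq1} you correctly identify the issue (compounding the Beck gain against the combinatorics and $\widetilde\rho^k$), but what you offer is a plan, not a proof, and it omits the pieces that carry the actual weight: the inclusion--exclusion over admissible graphs \eqref{e.In-Ex}, the extension of the simple coincidence estimate to pairs with fixed coordinates (Lemma~\ref{l.Bf}) and then to arbitrary admissible graphs (Theorem~\ref{l.admissible<}, whose proof needs the clique-deletion/independent-edge argument), and---a point your sketch does not address at all---the factor $\prod_{t\notin V(G)}(1+\widetilde\rho F_t)$ multiplying each coincidence block, which the paper controls by H\"older with exponents $q^{2b}$ and $(q^{2b})'$ together with the uniform bounds \eqref{e.CCLL} and \eqref{e.expq2b} over subsets $V$. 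Since \eqref{e.expq2b}, \eqref{e.<0} (through Theorem~\ref{t.better}) and \eqref{e.neq1} are exactly the places where the Beck Gain machinery is indispensable, the proposal as written has genuine gaps at the heart of the lemma.
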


\begin{proof}
We give the proof of the Lemma, assuming our main inequalities
proved in the subsequent sections.

\smallskip

\emph{Proof of \eqref{e.<0}.} Using the distributional estimate
\eqref{e.partialX2} of Theorem \ref{t.better} proved in Section 5,
and the definition of $\Psi$ we estimate

\begin{align*} \mathbb P
(\Psi <0)&\le \sum _{t=1} ^{q} \mathbb P ( \widetilde \rho \,  F_t <
-1)
\\
& = \sum _{t=1} ^{q}  \mathbb P (\rho F_t < - a ^{-1} q ^{1/2-b})
\\
& \lesssim  \operatorname {exp} (-c a ^{-2} q ^{1-2b})\,.
\end{align*}

\smallskip

\emph{Proof of \eqref{e.expq2b}.} The proof of this is detailed
enough and uses the results of subsequent sections, so we postpone
it to Section 6, Lemma \ref{l.expq2b} below.

\smallskip

\emph{Proof of \eqref{e.E1}.} Expand the product in the definition
of $ \Psi $. The leading term is one.  Every other term is a product
\begin{equation*}
\prod _{k\in V} \widetilde \rho \, F_k \,,
\end{equation*}
where $ V$ is a non-empty subset of $ \{1 ,\dotsc, q \}$.  This
product is in turn a linear combination of products of $ \mathsf r$
functions. Among each such product, the maximum in the first
coordinate is unique. This fact tells us that the expectation of
these products of $ \mathsf r$ functions is zero.  So the
expectation of the product above is zero. The proof is complete.

\smallskip

\emph{Proof of \eqref{e.CL1}.} We use the first two estimates of our
Lemma. Observe that
\begin{align*}
\norm \Psi .1. & = \mathbb E \Psi -2 \mathbb E \Psi \mathbf 1 _{\Psi
<0}
\\
& \le 1+ 2\mathbb P (\Psi <0) ^{1/2} \norm \Psi .2.
\\
& \lesssim 1+ \operatorname {exp} ( -A q ^{1-2b}/2+ a' q ^{ 2b})\,.
\end{align*}
We have taken $ b=1/4$ so that $ 1-2b=2b$.  For sufficiently small $
a$ in  \eqref{e.q}, we will have $ A \gtrsim a'$.
We see that \eqref{e.CL1} holds.

Indeed,   Lemma~\ref{l.expq2b} proves a uniform estimate, namely
\begin{equation*}
\sup _{V\subset \{1 ,\dotsc, q\}} \mathbb E \prod _{v \in V}  (1+
\widetilde \rho F_t) ^2 \lesssim \operatorname {exp} (a' q ^{2b})\,.
\end{equation*}
Hence, the argument above proves
\begin{equation}\label{e.CCLL}
\sup _{V\subset \{1 ,\dotsc, q\}} \NOrm \prod _{t \in V}  (1+
\widetilde \rho F_t) .1. \lesssim 1\,.
\end{equation}

\smallskip


\emph{Proof of \eqref{e.neq1}.}  The primary facts are
\eqref{e.CCLL} and Theorem~\ref{l.admissible<}; we use the notation
devised for that Theorem.

We use the triangle inequality,  estimate \eqref{e.expq2b} of
Lemma~\ref{l.technical}, H\"older's inequality, with indices  $ q
^{2b}$ and $\bigl( q^{2b} \bigr)'=q^{2b}/( q^{2b}-1 )$ , the
inclusion-exclusion identity \eqref{e.In-Ex} and estimate
\eqref{e.equiv} of Theorem \ref{l.admissible<} in the calculation
below. Notice that we have
\begin{align*}
\sup _{V\subset \{1 ,\dotsc, q\}} \NOrm \prod _{t \in V}  (1+
\widetilde \rho F_t) .(q ^{2b})'. &\le \sup _{V\subset \{1 ,\dotsc,
q\}} \NOrm \prod _{t \in V}  (1+ \widetilde \rho F_t) .1. ^{(q
^{2b}-1)/q ^{2b}} \times \NOrm \prod _{t \in V}  (1+ \widetilde \rho
F_t) .2. ^{2q ^{-2b}}
\\ &\lesssim 1.
\end{align*}

We now estimate
\begin{align}
\norm \Psi ^{\neg}.1. &\le \sum _{G\textup{ admissible}} \NOrm
\widetilde{\rho}\,\,^{|V(G)|} \operatorname {SumProd} (X(G)) \cdot
\prod _{t\in \{1 ,\dotsc, q\}-V(G)} (1+ \widetilde \rho F_t) .1.
\label{e.main}
\\
& \le \sum _{G\textup{ admissible}} \norm
\widetilde{\rho}\,\,^{|V(G)|} \operatorname {SumProd} (X(G)) . q
^{2b}. \cdot \NOrm  \prod _{t\in \{1 ,\dotsc, q\}-V(G)} (1+
\widetilde \rho F_t) .( q ^{2b})'. \nonumber
\\
& \lesssim \sum _{G\textup{ admissible}} \norm
\widetilde{\rho}\,\,^{|V(G)|} \operatorname {SumProd} (X(G)) . q
^{2b}. \nonumber
\\
& = \sum_{v= 2}^q \sum _{G:\, |V(G)|=v} \norm
\widetilde{\rho}\,\,^{|V(G)|} \operatorname {SumProd} (X(G)) . q
^{2b}. \nonumber
\\
& \lesssim \sum _{v=2} ^{q} \left( ^{q}_{v} \right) v^{2dv} [ q
^{C'} n ^{-\eta} ] ^{v} \nonumber
\\
& \lesssim q ^{C''} n ^{-\eta} \lesssim n ^{-\varepsilon '} \lesssim
1\,.\nonumber
\end{align}

\smallskip
\emph{Proof of \eqref{e.sd1}.}  This follows from \eqref{e.neq1} and
\eqref{e.CL1}, and the identity $ \Psi =1+\Psi ^{\textup{sd}}+\Psi
^{\neg}$ together with the triangle inequality.

\end{proof}
\section{The Analysis of the Coincidence} 

Following the language of J.~Beck \cite{MR1032337}, a
\emph{coincidence} occurs if we have two vectors $ \vec r\neq \vec
s$ with e.\thinspace g.~$ r_2=s_2$. He observed that sums over
products of $ \mathsf r$ functions in which there are coincidences
obey favorable $ L^2$ estimates.  We refer to (extensions of) this
observation as the \emph{Beck Gain.} We introduce relevant notation
for this situation. For $1\le k \le d$ and $1\le t_1,t_2 \le q$, set
\begin{equation}\label{e.Phi}
\Phi _{t_1,t_2,k} \coloneqq  \sum _{\substack{ \vec r \in \mathbb A_{t_1};\, \vec s \in \mathbb A_{t_2}\\\vec r\neq \vec s\\
r _{k}= s _{k} } } f _{\vec r} \cdot f _{\vec s}\,.
\end{equation}
Notice that due to our
 construction of the Riesz Product, there are no coincidences in the
 first coordinate in the decomposition of $\Psi$, although the case $k=1$ is important
 for the proof of the $L^2$ estimate \eqref{e.expq2b} .
In the sum above, there are $ 2d-3$ free parameters among the
vectors $ \vec r$ and
 $ \vec s$.  That is, the pair of vectors $ (\vec r,\vec s)$ are completely specified by
 their values in $ 2d-3$ coordinates. The following lemma suggests
 that these parameters behave as if they were orthogonal.

\begin{bg}\label{l.SimpleCoincie}
We have the estimates below, valid for an absolute implied constant
that is only a function of dimension $ d \ge 3$.
\begin{equation}\label{e.BG}
\sup  { \norm \Phi _{t_1, t_2,k}.p.} \lesssim   p ^{d-1/2} \cdot n
^{d-3/2}\,, \qquad  2\le p < \infty \,,
\end{equation}
where the supremum is taken over all $1\le k \le d$ and $1\le
t_1,t_2 \le q$.
\end{bg}

This estimate is smaller by $ 1/2$  power of $ n$  than what one might naively expect,
and so we say that we have an average gain of $ 1/4$ power of $ n$ in the
products above.  (Here, the average is in reference to the two functions we
form the product of.)
This Lemma, in dimension $ d=3$ appears in \cite{bl}.  We will give an inductive
proof of this estimate, that requires that we revisit the three dimensional case.
In the next  section, we also derive other estimates from the one above.

The estimate above may admit an improvement, in
that the power of $ p$ is perhaps too large by a single power, due
to our use of Proposition~\ref{p.condExpect}. (There should also
be a dependence upon $ q$, but on this point, and in many others,
the arguments of this paper are suboptimal,  and so we do not pursue this
point here.)

\begin{conjecture}\label{j.bg}
We have the estimates below, valid for an absolute implied constant
that is only a function of dimension $ d \ge 3$.
\begin{equation}\label{e.BGconj}
\sup  { \norm \Phi _{t_1, t_2,k}.p.} \lesssim   (pn) ^{d-3/2} \,,
\qquad  2\le p < \infty \,.
\end{equation}
\end{conjecture}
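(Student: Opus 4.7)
The plan is to prove the estimate by induction on dimension $d$, with base case $d=3$ supplied by \cite{bl}, and to reduce the inductive step either to Theorem~\ref{t.LP} or to the previous dimension's instance of the same lemma.

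For the induction step, I would fix $\ell=r_k=s_k$ and decompose $\Phi_{t_1,t_2,k}=\sum_\ell\Phi^{(\ell)}$. Since $h_{R_k}(x_k)h_{S_k}(x_k)$ vanishes unless $R_k=S_k$, one can write
\begin{equation*}
\Phi^{(\ell)}(x)=\sum_{J:\,|J|=2^{-\ell}}\mathbf{1}_J(x_k)\cdot G_J^{(\ell)}(\hat x_k),
\end{equation*}
where $G_J^{(\ell)}$ is a $(d-1)$-dimensional function of the remaining variables $\hat x_k$, built from products $f_{\vec r'}f_{\vec s'}$ indexed by reduced parameter pairs $(\vec r',\vec s')$ with $|\vec r'|=|\vec s'|=n-\ell$ and $\vec r'\neq\vec s'$. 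Now split $G_J^{(\ell)}$ according to whether $\vec r'$ and $\vec s'$ are strongly distinct in the remaining $d-1$ coordinates. The strongly distinct piece is, by Proposition~\ref{p.productsofhaars}, a sum of $(d-1)$-dimensional $\mathsf r$ functions (with parameter vectors $\max(\vec r',\vec s')$) and is estimated via Theorem~\ref{t.LP} applied in dimension $d-1$. The non-strongly-distinct piece decomposes into a finite sum of $(d-1)$-dimensional coincidence sums of exactly the shape of $\Phi$ in dimension $d-1$, each controlled by the induction hypothesis.

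After integrating $x_k$ (a clean Fubini, since the intervals $J$ are disjoint), summing over $\ell$, and passing from $L^2$ to $L^p$ via an iterated vector-valued Littlewood-Paley argument coupled with Proposition~\ref{p.condExpect}, one recovers the bound $p^{d-1/2}n^{d-3/2}$. The principal obstacle is the sum over $\ell$: naively applying the triangle inequality across the $n$ values of $\ell$ would lose a factor of $n$ and destroy the gain. Circumventing this requires exploiting orthogonality across levels, for instance by treating $\sum_\ell\sum_J\mathbf{1}_J(x_k) G_J^{(\ell)}(\hat x_k)$ as a martingale or square function in the $x_k$ variable and applying Littlewood-Paley one more time, or by a sharper combinatorial analysis within each $G_J^{(\ell)}$. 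Uniformity in $(t_1,t_2,k)$, needed for later applications of the lemma, also requires care but is not the main difficulty.
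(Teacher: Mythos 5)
The statement you have set out to prove is Conjecture~\ref{j.bg}, which the paper itself does not prove: it is stated as open, and the authors explicitly attribute the extra power of $p$ in what they can prove (Lemma~\ref{l.SimpleCoincie}, giving $p^{d-1/2}n^{d-3/2}$) to their use of Proposition~\ref{p.condExpect}. Your proposal does not close that gap; in fact its own stated conclusion is the bound $p^{d-1/2}n^{d-3/2}$, i.e.\ the already-proved Lemma~\ref{l.SimpleCoincie}, not the conjectured $(pn)^{d-3/2}$ of \eqref{e.BGconj}. So even taken entirely at face value, the argument would at best reprove the weaker estimate and does not establish the statement.

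Beyond that, the step you yourself flag as the principal obstacle is genuinely unresolved, and the fix you sketch would fail. After fixing the coincidence level $\ell=r_k=s_k$, the level-$\ell$ piece is $\sum_J \mathbf 1_J(x_k)\,G^{(\ell)}_J(\hat x_k)$: since $h_{R_k}h_{S_k}=\mathbf 1_{R_k}$ when $R_k=S_k$, these pieces carry no cancellation at all in the $x_k$ variable, so they are not martingale differences and a further Littlewood--Paley or square-function application in $x_k$ across the levels $\ell$ is simply not available. Nor can ``a sharper combinatorial analysis within each $G^{(\ell)}_J$'' help by itself: each level already contributes about $n^{d-2}$ in $L^2$, so the needed square-root gain must come from orthogonality across the $\ell$-sum. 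This is exactly the difficulty the paper's proof of the weaker Lemma~\ref{l.SimpleCoincie} confronts: it regains orthogonality by inserting conditional expectations (Proposition~\ref{p.condExpect}) and then expanding the square, which produces the four-fold products (the inside/outside factorization of $\mathbb B''$); and it is precisely the conditional-expectation step that costs the additional power of $p$ separating the proved bound from \eqref{e.BGconj}. A proof of the conjecture would have to replace that device, and neither your sketch nor the paper supplies a way to do so. A secondary point: the strongly distinct part of $G^{(\ell)}_J$ is not a sum of $\mathsf r$ functions over a single hyperbolic layer with $\pm1$ coefficients (the parameters $\max(\vec r',\vec s')$ have varying lengths and occur with multiplicity), so Theorem~\ref{t.LP} does not apply verbatim there either; one needs the kind of coefficient counting the paper carries out through its $\mathcal B$, $\mathcal C$, $\mathcal D$ induction.
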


\subsection*{Proof of Lemma~\ref{l.SimpleCoincie}}

The proof is inductive on dimension. We shall suppress dependence on
$t_1$, $t_2$. In fact, we shall prove the
Theorem for the quantity
\begin{equation}\label{e.BGa}
\Phi _{1} \coloneqq  \sum _{\substack{ \vec r\neq \vec s \in \mathbb H_n\\
r _{1}= s _{1} } } f _{\vec r} \cdot f _{\vec s}\, ,
\end{equation}
and the claimed statement will follow with only minor adjustments.
To set up the induction, we need some definitions.

\begin{definition}\label{d.sumProd} Given a set of $ \mathsf r$ functions
$ \{f _{\vec r}\}$ and
subset $ \mathbb C \subset \mathbb H _ {n_1} \times \cdots \times \mathbb H _{n_t}$,
set
\begin{equation*}
\operatorname {SumProd} (\mathbb C )
=\sum _{(\vec r_1 ,\dotsc, \vec r_t)\in \mathbb C } \prod _{s=1} ^{t} f _{\vec r_s} \,.
\end{equation*}
Below, we will be interested in pairs and four-tuples of $ \mathsf
r$ functions. It is an important element of the argument, allowing
us to run the induction, that we consider products of $ \mathsf r$
functions where the vectors are in   hyperbolic collections $\mathbb
H _n $, for different values of $ n$.
\end{definition}

The main quantity we induct on is then
\begin{equation}\label{e.B}
\mathcal B (d,n,p) =  \sup _{\mathbb B } \norm  \operatorname {SumProd} (\mathbb B ).p.
\,,  \qquad d,n,p\ge 3\,.
\end{equation}
Here, the supremum is formed over all   $ \mathbb B \subset \mathbb
H  _{n_1} \times \mathbb H _{n_2}$ and all $ \mathsf r$ functions
subject to these conditions:

\begin{itemize}
\item  There is a coincidence in the first coordinate:
For all $ (\vec r,\vec s)\in \mathbb B $, we have $ \vec r\neq \vec
s$ and  $ r_1=s_1$.
\item $ n_1,n_2\le n$. That is the lengths of the vectors $ \vec r$ and $ \vec s$
are permitted to be different.
\item No other restriction is placed upon the pairs of vectors in $ \mathbb B $.
\end{itemize}

Our main estimate on these quantities is as follows.

\begin{lemma}\label{l.B} We have the inequality below valid for all dimensions $ d\ge3$.
\begin{equation*}
\mathcal B (d,n,p) \lesssim p ^{d-1/2} n ^{d-3/2}\,,
\qquad p,n\ge 3\,.
\end{equation*}
\end{lemma}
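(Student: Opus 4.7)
The plan is to prove Lemma~\ref{l.B} by induction on the dimension $d$, with base case $d=3$. The formulation with two independent lengths $n_1, n_2 \le n$ is deliberate: any reduction to dimension $d-1$ will typically leave these parameters unequal, so the induction only closes if we allow them to decouple from the outset.

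For the base case $d=3$, I would start from the fact that a pair $(\vec r, \vec s) \in \mathbb B$ has $r_1 = s_1$ and $\vec r \neq \vec s$. When expanding $f_{\vec r} f_{\vec s}$ as a sum over $R \in \mathcal R _{\vec r}$ and $S \in \mathcal R _{\vec s}$, the factor $h_{R_1}(x_1)\, h_{S_1}(x_1)$ vanishes unless $R_1 = S_1$, in which case it equals $\mathbf 1 _{R_1}(x_1)$. Depending on whether $r_2 = s_2$ or $r_3 = s_3$ (possible only when $n_1 \neq n_2$), Proposition~\ref{p.productsofhaars} turns the products in coordinates $2$ and $3$ into either $\mathsf r$-function pieces of the form $\pm h _{R_j \cap S_j}$ or further indicators. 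After decomposing $\mathbb B$ by this pattern of coincidences, I would treat each $\mathbf 1 _{R_1}$ as a conditional expectation via Proposition~\ref{p.condExpect} (costing a factor $p$) and apply the one-dimensional Littlewood-Paley inequality in the other coordinates (each costing $\sqrt p$), recovering the $p^{5/2} n^{3/2}$ bound in the manner of \cite{bl}.

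For the inductive step, assuming the bound in all dimensions at most $d-1$, I would partition $\mathbb B$ by the set $T = \{\,j \in \{2,\ldots,d\} : r_j = s_j\,\}$ of additional coincidences. When $T = \emptyset$, the vectors are strongly distinct in coordinates $2, \ldots, d$; then $f_{\vec r} f_{\vec s}$ reads as $\mathbf 1 _{R_1}(x_1)$ times an $\mathsf r$ function on the remaining $d-1$ coordinates, and a hyperbolic Littlewood-Paley bound of the type of Theorem~\ref{t.LP} together with Proposition~\ref{p.condExpect} in coordinate $1$ delivers the target bound. When $T \neq \emptyset$, I would pick any $j \in T$, contract coordinates $1$ and $j$ into a thickened coordinate carrying a pair of indicator structures, and invoke the inductive hypothesis in dimension $d-1$ with shifted lengths $n_1', n_2' \le n$; one more use of Proposition~\ref{p.condExpect} together with a sum $\lesssim n$ over the value of $r_j$ and the choice of $j \in T$ should yield a bound strictly below the target.

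The main obstacle will be the numerical bookkeeping: verifying that the inductive bound $p^{d-3/2} n^{d-5/2}$, plus the factor of $p$ from Proposition~\ref{p.condExpect}, the factor of $n$ from summing over the coincidence coordinate, and the pure strongly-distinct case all compound to give exactly $p^{d-1/2} n^{d-3/2}$ rather than something larger. The margin is narrow: since Proposition~\ref{p.condExpect} costs a full factor $p$ and not merely $\sqrt p$, any extraneous application of it would immediately break the target rate, so the decomposition according to $T$ has to be arranged so that the indicator in coordinate $1$ is absorbed only once. The requirement to track independent lengths $n_1, n_2 \le n$ is essential here, because the contraction step in the $T \neq \emptyset$ case unbalances these parameters, and without the decoupled formulation the induction would not close.
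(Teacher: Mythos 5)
The decisive gap is in your treatment of the case $T=\emptyset$, which you dispatch in one sentence but which is in fact the heart of the lemma. When $\vec r$ and $\vec s$ have no coincidence beyond $r_1=s_1$, the product $f_{\vec r}f_{\vec s}$ is indeed an indicator in the first coordinate times Haar data in coordinates $2,\dotsc,d$, but (i) to apply Littlewood--Paley at all you must know, coordinate by coordinate, \emph{which} of the two vectors carries the finer scale --- this is exactly why the paper splits $\mathbb B$ into the collections $\mathbb D$ of \eqref{e.D} (all maxima in $\vec r$, where iterated LP does work and even gives $(pn)^{d-3/2}$) and the mixed collections $\mathbb C_{i,j}$ of \eqref{e.C}; and (ii) for the mixed collections, LP in two coordinates plus Proposition~\ref{p.condExpect} only reduces the problem to a square function $\bigl[\sum_{a,b}G_{a,b}^2\bigr]^{1/2}$ whose blocks $G_{a,b}$ are themselves sums of $\sim n^{2d-5}$ (in $d=3$, of $\sim n$) products of two $\mathsf r$ functions. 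Estimating these blocks trivially loses a factor of about $\sqrt n$ against the target $n^{d-3/2}$: already in $d=3$ the pointwise bound $\lvert G_{a,b}\rvert\le n$ yields only $n^{2}$, not $n^{3/2}$. Theorem~\ref{t.LP} is of no help here, since the blocks are not hyperbolic sums of $\mathsf r$ functions but carry large multiplicities and an indicator in the first variable; the required extra square-root cancellation \emph{is} the Beck gain one is trying to prove, and it cannot be imported from a generic LP statement. The paper produces it by the symmetrization identity \eqref{e.==}, expanding the square as in \eqref{e.CC1}--\eqref{e.CC3}, counting the diagonal ($n^{2d-3}$), and controlling the off-diagonal collections $\mathbb B'$ and $\mathbb B''$ via the induction hypothesis $\mathcal B(d-1)$ together with the inside/outside product factorization (and the ancillary Lemma~\ref{l.M} at $d=3$). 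None of this appears in your outline; relatedly, your induction invokes the lower-dimensional hypothesis only in the $T\neq\emptyset$ case, whereas it is needed precisely in the main case $T=\emptyset$, and your base case defers to \cite{bl} ``in the manner of,'' which does not by itself cover the $n_1\neq n_2$ generality you correctly identify as essential.

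What you do have right is the coarse architecture: induction on dimension, the decoupled lengths $n_1,n_2\le n$, the use of Proposition~\ref{p.condExpect} to absorb the indicator in the coincident coordinate at the cost of a factor $p$, and the observation that additional coincidences ($T\neq\emptyset$) can be peeled off by fixing their values and dropping to a lower-dimensional instance --- this matches the paper's handling of the equal-coordinate sets $J$ inside $\mathcal D$ and of the corrections $\mathbb D_{a,b}$. But as written, the proposal asserts the hard estimate rather than proving it, so the argument does not close.
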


The inductive argument for Lemma~\ref{l.B} has the underlying strategy of
reducing dimension by
application of  the Littlewood-Paley inequalities.  But, this causes
the collections of vectors to lose some of their symmetry.  Regaining the symmetry
causes us to introduce  additional types of
collections of vectors.  Two of these collections are as follows.
\begin{equation}\label{e.C}
\mathcal C (d,n,p) =  \sup _{\mathbb C } \norm  \operatorname {SumProd} (\mathbb C ).p.
\,,  \qquad d,n,p\ge 3\,.
\end{equation}
Here, the supremum is formed over all   $ \mathbb C \subset \mathbb H  _{n_1}
\times \mathbb H _{n_2}$ and all $ \mathsf r$ functions subject to these conditions

\begin{itemize}
\item  There is a coincidence in the first coordinate:
For all $ (\vec r,\vec s)\in \mathbb C $, we have $ \vec r\neq \vec s$ and $ r_1=s_1$.
\item
For all $ (\vec r,\vec s)\in \mathbb C $, we have $ r _{2}>s_2$ and $ r_3<s_3$.
\item $ n_1,n_2\le n$.
\item There is no other restriction on the pairs of vectors in $ \mathbb C $.
\end{itemize}

The only difference between the present collections and
the collections in $ \mathcal B (d,n,p)$ is that in the present collections we assume
locations of maximums in the second and third coordinates, thereby
permitting application of the Littlewood-Paley inequalities
in those two coordinates.

The second collection is less sophisticated.  We simply assume that the maximum
always occurs in say, the first coordinate.
 Define
\begin{equation}\label{e.D}
\mathcal D (d,n,p) =  \sup _{\mathbb D } \norm  \operatorname {SumProd} (\mathbb D ).p.
\,,  \qquad d,n,p\ge 3\,.
\end{equation}
Here, the supremum is formed over all   $ \mathbb D \subset \mathbb H  _{n_1}
\times \mathbb H _{n_2}$ and all $ \mathsf r$ functions subject to these conditions

\begin{itemize}
\item  There is a coincidence in the first coordinate:
For all $ (\vec r,\vec s)\in \mathbb D $, we have $ \vec r\neq \vec s$ and $ r_1=s_1$.
\item
For all $ (\vec r,\vec s)\in \mathbb D $, and all $ 2\le j\le d$, we have $ r _{j}\ge s_j$.
\item $ n_2<n_1\le n$.
\end{itemize}
That is, we require that in each coordinate where there is a
maximum, the maximum occurs in the vector $ \vec r$.

\begin{lemma}\label{l.C+D}
 We have the inequality below valid for all dimensions $ d\ge3$.
\begin{equation*}
\mathcal C (d,n,p)\,,\, \mathcal D (d,n,p) \lesssim p ^{d-1/2} \cdot n ^{d-3/2}\,,
\qquad p,n\ge 3\,.
\end{equation*}
\end{lemma}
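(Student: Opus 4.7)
The plan is to prove both bounds by induction on the dimension $d$, with the base case $d=3$ following from the arguments of \cite{bl}. The key feature of the definitions of $\mathcal{C}$ and $\mathcal{D}$ is that they specify, for certain coordinates, where the maximum of $r_j$ versus $s_j$ occurs; this is precisely what allows the Littlewood-Paley inequalities to be applied in those coordinates, effectively reducing the dimension.

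For $\mathcal{C}(d,n,p)$, the strict inequalities $r_2 > s_2$ and $r_3 < s_3$ ensure that in coordinate $2$ the Haar product $h_{R_2} h_{S_2}$ collapses to $\pm h_{R_2}$, a single Haar function at the finer scale, and analogously in coordinate $3$ it becomes $\pm h_{S_3}$. Grouping terms by the value of $r_2$, I would apply the one-dimensional Littlewood-Paley inequality \eqref{e.lp} in the second variable, producing a factor $\sqrt{p}$ and an $\ell^{2}$-valued expression in which the contributions from coordinate $2$ have been squared away. A second application in coordinate $3$ yields another $\sqrt{p}$. What remains is a lower-dimensional coincidence problem to which the inductive hypothesis for $\mathcal{B}(d-2,n,p)$ may be applied, yielding the two missing powers of $n$ and the correct power of $p$.

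For $\mathcal{D}(d,n,p)$, the hypothesis $r_j \geq s_j$ in every $j \geq 2$, combined with $n_2 < n_1$, means that the maximum always occurs in $\vec r$ and is strict in at least one coordinate. Consequently, for each fixed $\vec r$, every $h_{S_j}$ is constant on the support of $h_{R_j}$, so $\vec s$ only contributes a sign on each $\vec r$-cell. I would reorganize the sum as $\sum_{\vec r} f_{\vec r} \cdot G_{\vec r}$, where $G_{\vec r}$ is a piecewise-constant coefficient assembled from the $f_{\vec s}$. Then Proposition~\ref{p.condExpect}, applied in the two-dimensional subspace of coordinates where $\vec s$ is free, plus Littlewood-Paley, control the $L^p$ norm in terms of a lower-dimensional instance of $\mathcal{B}$ (or $\mathcal{D}$), closing the induction.

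The main obstacle is the bookkeeping after squaring: each Littlewood-Paley application produces a product of four $\mathsf r$-functions, and the resulting coincidence pattern must be analyzed carefully to land back in a lower-dimensional instance of one of $\mathcal{B}$, $\mathcal{C}$, or $\mathcal{D}$, rather than in an unconstrained, and fundamentally harder, case. A secondary difficulty is that the $\mathcal{D}$ hypothesis allows equalities $r_j = s_j$, which force $R_j = S_j$ in those coordinates and must be treated by a separate subsidiary argument so that the target bound $p^{d-1/2} n^{d-3/2}$ is preserved without extra logarithmic losses.
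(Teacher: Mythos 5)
There is a genuine gap in your treatment of $\mathcal C(d)$. After you apply Littlewood--Paley in coordinates $2$ and $3$, grouping by $r_2=a$ and $s_3=b$, the remaining object is \emph{not} an instance of $\mathcal B(d-2,n,p)$: only $r_2$ and $s_3$ have been frozen, while the partner coordinates $s_2<a$ and $r_3<b$ remain free, and the coordinate-$2$ factor of $f_{\vec s}$ and the coordinate-$3$ factor of $f_{\vec r}$ are still present. The two vectors lose \emph{different} coordinates, so the pair does not fit the symmetric hyperbolic structure over which $\mathcal B$ is defined; indeed, if your reduction were literally available, the count $p\cdot n\cdot p^{d-5/2}n^{d-7/2}=p^{d-3/2}n^{d-5/2}$ would beat the sharp bound by a full power of $n$ --- a sign that the free parameters $s_2,r_3$ have been dropped. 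Handling this asymmetric remainder is exactly the hard part, and it is what the paper's machinery is for: the sum over $\mathbb C$ with $r_2=a$, $s_3=b$ is first symmetrized as a conditional expectation onto $\mathcal F_{a,b}$ of the corresponding sum over $\mathbb B$, minus the diagonal correction $\mathbb D_{a,b}$ (see \eqref{e.==}); Proposition~\ref{p.condExpect} then removes the conditional expectations at the cost of a factor $p$; only the correction term $\mathbb D_{a,b}$, in which \emph{both} vectors have coordinates $2,3$ fixed, is estimated by $\mathcal B(d-2)$. The main term requires expanding the square \eqref{e.CC1}--\eqref{e.CC3}: the four-distinct-vector piece is factored, after fixing the first coordinates $F_1,F_2$, into ``inside'' and ``outside'' two-fold coincidence sums with fixed first coordinates, each controlled by the inductive hypothesis $\mathcal B(d-1)$ (by the ancillary collection of Lemma~\ref{l.M} when $d=3$). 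None of this is supplied by, or replaceable with, your appeal to $\mathcal B(d-2)$.

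Your sketch for $\mathcal D(d)$ is closer in spirit but misassigns the tools. The paper's argument is direct and uses neither Proposition~\ref{p.condExpect} nor any lower-dimensional $\mathcal B$: one reduces to the strict case $r_j>s_j$ for all $j\ge2$, applies Littlewood--Paley in all coordinates $2,\dots,d$ (the maximum sits in $\vec r$ there), pays $p^{(d-1)/2}n^{(d-1)/2}$ for the square function over $\vec\rho=(r_2,\dots,r_d)$, and then uses the decisive counting point: once $\vec\rho$ is fixed, $r_1$ is determined by $\lvert\vec r\rvert=n_1$, hence so is $s_1$, leaving only $d-2$ free parameters in $\vec s$, which iterated Littlewood--Paley bounds by $(pn)^{(d-2)/2}$; the equality cases $r_j=s_j$ are absorbed by passing to $\mathcal D_{\neq}$ in lower dimension, as you anticipated. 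Your plan to apply the conditional-expectation inequality ``in the two-dimensional subspace of coordinates where $\vec s$ is free'' does not match this situation ($\vec s$ is free in $d-1$ coordinates), and without the observation that fixing $\vec\rho$ pins down $s_1$ your scheme does not produce the exponent $n^{d-3/2}$.
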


We turn to the proofs of the Lemma~\ref{l.B} and Lemma~\ref{l.C+D}, and begin
by explaining the logic of our induction.  Let $ \mathcal B (d)$ stand for the
inequalities in Lemma~\ref{l.B} in dimension $ d$, and likewise for $ \mathcal C (d)$
and $ \mathcal D (d)$.  We prove:

\begin{itemize}
\item The inequalities $ \mathcal D (d)$ for all dimensions $ d$.
\item The inequalities $ \mathcal B (3)$ and $ \mathcal C (3)$.  At the same time,
assuming $ \mathcal B (d-1)$, $ d\ge 4$,  we prove $ \mathcal C
(d)$.
\item
Assuming $ \mathcal C (d)$ and $ \mathcal D (d)$, we prove $
\mathcal B (d)$.
\end{itemize}

These clearly combine to prove the two Lemmas, and so complete the proof of
Lemma~\ref{l.SimpleCoincie}.

\subsection*{The Inequalities $ \mathcal D (d)$.}

The definition of $ \mathcal D (d)$ permits the possibility of equality
for a large number of coordinates of  the two vectors.
Let us exclude that case in this definition.
Define
\begin{equation}\label{e.Dneq}
\mathcal D _{\neq} (d,n,p) =  \sup _{\mathbb D }
\norm  \operatorname {SumProd} (\mathbb D ).p.
\,,  \qquad d,n,p\ge 3\,,
\end{equation}
where $ \mathbb D $ is as in \eqref{e.D}, but with the additional condition that
for $ 2\le j\le d$ we have $ r_j>s_j$.
Then, we are free to apply the Littlewood-Paley inequality in each of the
coordinates from $ 2$ to $ d$.

Fix a collection of vectors $ \mathbb D $, and a collection of
$ \mathsf r$ functions which achieves the supremum in \eqref{e.Dneq}.
For this collection, and a choice of vector $ \vec \rho \in \mathbb N ^{d-1}$, let
\begin{equation*}
\mathbb D _{\vec \rho }= \{ (\vec r, \vec s)\in \mathbb D \mid  r _{j+1}=\rho _j\,,
\ 1\le j \le d-1\}\,.
\end{equation*}
Of course there are at most $ \lesssim n ^{d-1}$ values of $ \vec\rho $ for which the
collection above is non-empty.
Then,
\begin{align*}
\mathcal D _{\neq} (d,n,p)
&\lesssim
p ^{(d-1)/2}
\NORm
\Biggl[
\sum _{\vec \rho } \operatorname {SumProd} (\mathbb D _{\vec \rho }) ^2
\Biggr] ^{1/2}
. p .
\\
& \lesssim
p ^{(d-1)/2} n ^{(d-1)/2} \sup _{\vec \rho }
\NORm
\sum _{\vec \rho } \operatorname {SumProd} (\mathbb D _{\vec \rho })  . p .  \,.
\end{align*}
But, the coordinate $  r_1$ is completely specified in $ \mathbb D
_{\vec \rho }$, and therefore does not contribute to the last norm.
And so the first coordinate of $ \vec s$ is specified.  Therefore,
there are at most $ d-2 $ free choices of parameters in the vector $
s$. By application of the Littlewood--Paley inequalities, we have
\begin{equation*}
\mathcal D _{\neq} (d,n,p)
\lesssim (pn) ^{d-3/2}\,.
\end{equation*}
This is better than the claimed inequality.

\smallskip

If there are a set $ J\subset \{2 ,\dotsc, d\}$ of coordinates for which $ r_j=s_j$
for all $ j\in J$, then after arbitrarily specifying these values, we have
will be in position to apply the inequality $ \mathcal D  _{\neq}(d- \lvert  J\rvert, n, p )$.
This will clearly give a smaller estimate.  As the number of possible choices for
$  J$ is only a function of dimension, this completes the proof.

\subsection*{The Bounds $ \mathcal B (3)$ and $ \mathcal C (3)$.
Assuming $ \mathcal B (d-1)$, $ d\ge 4$,  we prove $ \mathcal C (d)$. }

In this section, we will prove the estimates for   $ \mathcal C (3)$.
As well, we present the inductive proof of $ \mathcal C (d)$
assuming $ \mathcal B (d-1)$, for $ d\ge 4$.

\bigskip
For the proof of $ \mathcal C (3)$
there is an ancillary collection that we will have recourse to.
Let
\begin{equation}\label{e.M}
\mathcal M (n,p)=\sup _{\mathbb M} \norm \operatorname {SumProd} (\mathbb M).p.
\end{equation}
where the supremum is formed over all choices of $\mathbb M\subset \mathbb H _{n_1} \times
\mathbb H _{n_2}$ and all $ \mathsf r$ functions subject to these conditions.

\begin{itemize}
\item $ \vec r, \vec s$ are three dimensional vectors.
\item  There is a coincidence in the first coordinate:
For all $ (\vec r,\vec s)\in \mathbb C $, we have $ \vec r\neq \vec s$ and $ r_1=s_1$.
\item  The second coordinates are fixed: There are integers $ F_1, F_2$ so that
for all $ (\vec r, \vec s)\in \mathbb M $ we have $ r_2=F_1$ and $ s_2=F_2$.
\item There is no coincidence in the third coordinate:  For all $ (\vec r, \vec s) \in
\mathbb M $ we have $ r_3\neq s_3$.
\item $ n_1,n_2\le n$.
\end{itemize}

See Figure~\ref{f.M} for an illustration of this collection.We
remark that in the case $ n_1\neq n_2$, a coincidence can occur in
the third coordinate, a case that will come up below.

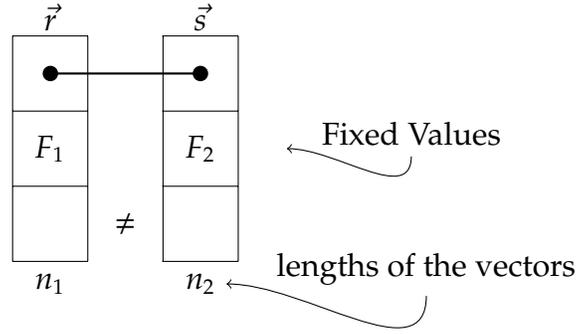
\begin{figure}
\begin{center}
 \begin{tikzpicture}
  \draw[step=1cm] (0,0) grid (1,3);
  \draw[step=1cm] (1.99,0) grid (3,3);
  \draw (.5,3.25) node {$ \vec r$};
  \draw (2.5,3.25) node {$ \vec s$};
  \draw (.5,-.3) node  (n1) {$ n_1$};
  \draw (2.5,-.3) node (n2) {$ n_2$};
  \draw (5.5,-.1) node (n5) { lengths of the vectors};
  \draw[thick,*-*] (.4,2.5) -- (2.6,2.5);
  \path[->] (n5) edge [out=-90, in=0] (n2);

  \draw (.5,1.5) node  {$ F_1$};
  \draw (2.5,1.5) node  {$ F_2$};
  \draw (3.5,1.5) node (n6) {};
    \draw (5.3,1.7) node (n7) {   Fixed Values};
   \path[->] (n7) edge [out=-90, in=0] (n6);
  \draw (1.5,.5) node {$ \neq$};
\end{tikzpicture}
\end{center}
\caption{The collections $ \mathbb M $, with a coincidence in the top row,
the second row taking fixed values, and no coincidence in the bottom row.}
\label{f.M}
\end{figure}

\begin{lemma}\label{l.M} We have the inequalities
\begin{equation}\label{e.M<}
\mathcal M (n,p) \lesssim \sqrt p \cdot \sqrt n \,.
\end{equation}
\end{lemma}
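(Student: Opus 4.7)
The strategy is to apply the one-dimensional Littlewood-Paley inequality in the third coordinate, iterated over the first two coordinates via Fubini. First, observe that since $\vec r \in \mathbb H_{n_1}$, $\vec s \in \mathbb H_{n_2}$, and the second coordinates $r_2 = F_1$ and $s_2 = F_2$ are fixed, each pair $(\vec r, \vec s) \in \mathbb M$ is parametrized by the single integer $m := r_1 = s_1$; explicitly $r_3 = n_1 - m - F_1$ and $s_3 = n_2 - m - F_2$. The difference $r_3 - s_3 = (n_1 - F_1) - (n_2 - F_2)$ is independent of $m$, so the hypothesis $r_3 \neq s_3$ fixes a definite sign; without loss of generality $r_3 > s_3$ for every pair in $\mathbb M$. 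In particular there are at most $O(n)$ admissible values of $m$, and the map $m \mapsto r_3(m)$ is injective.

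Second, expand $f_{\vec r(m)} f_{\vec s(m)}$ using the tensor structure of the Haar basis, coordinate by coordinate. In coordinate $1$, since $|R_1| = |S_1|$, the product $h_{R_1} h_{S_1}$ vanishes unless $R_1 = S_1$, in which case it equals $\mathbf 1_{R_1}(x_1)$. In coordinate $2$, Proposition~\ref{p.productsofhaars} (or a short direct computation, with a minor split between $F_1 = F_2$ and $F_1 \neq F_2$) shows that $h_{R_2} h_{S_2}$ is either a signed Haar function at the finer of the two scales or an indicator of the common dyadic interval. In coordinate $3$, since $r_3 > s_3$, the product $h_{R_3} h_{S_3}$ is a signed Haar function $\pm h_{R_3}$ at scale $2^{-r_3(m)}$. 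Consolidating these facts yields an expansion
\begin{equation*}
f_{\vec r(m)} f_{\vec s(m)}(x_1, x_2, x_3) = \sum_{R_3 :\, |R_3| = 2^{-r_3(m)}} A_{m, R_3}(x_1, x_2) \, h_{R_3}(x_3),
\end{equation*}
where the crucial feature is the pointwise bound $|A_{m, R_3}(x_1, x_2)| \le 1$: for each fixed $(x_1, x_2)$ only one dyadic interval at each of the scales $2^{-m}$ and $2^{-\min(F_1, F_2)}$ is active, and each surviving Haar factor is $\pm 1$.

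Third, for each fixed $(x_1, x_2)$ apply the one-dimensional Littlewood-Paley inequality \eqref{e.lp} in the variable $x_3$, and integrate the resulting $L^p(dx_3)$ estimate over $(x_1, x_2)$ via Fubini:
\begin{equation*}
\norm{\operatorname{SumProd}(\mathbb M)}.p. \lesssim \sqrt{p}\, \NORm \Bigl( \sum_{m,\, R_3} |A_{m, R_3}(x_1, x_2)|^2 \, \mathbf 1_{R_3}(x_3) \Bigr)^{1/2}.p..
\end{equation*}
Since $m \mapsto r_3(m)$ is injective, for each fixed $x_3$ and each $m$ there is exactly one $R_3$ of scale $2^{-r_3(m)}$ containing $x_3$; hence the inner sum reduces to $\sum_m |A_{m, R_3(m, x_3)}(x_1, x_2)|^2 \le |\{m\}| \lesssim n$. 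The square function is thus pointwise $\lesssim \sqrt n$, and the claim $\mathcal M(n, p) \lesssim \sqrt p \sqrt n$ follows.

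The principal obstacle is the bookkeeping needed to expand the product of two Haar functions in a coordinate where the scales coincide (coordinate $1$, and potentially coordinate $2$) and verify the pointwise bound $|A_{m, R_3}| \le 1$; once the expansion is in hand, the LP reduction and the counting argument are routine.
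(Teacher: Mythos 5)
Your proof is correct and is essentially the paper's argument: a single application of the Littlewood--Paley inequality in the third coordinate, using the fact that the finest scale there (equivalently, your parameter $m=r_1=s_1$; the paper uses $a=\max\{r_3,s_3\}$) determines the pair, so the square function has $\lesssim n$ pointwise-bounded terms and is $\lesssim \sqrt n$. Your explicit expansion with the coefficients $A_{m,R_3}$ bounded by $1$ is just a hands-on version of the Hilbert-space-valued Littlewood--Paley step the paper invokes directly.
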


\begin{proof}
Notice that the value of the maximum in the third coordinate
completely specifies the pair of vectors $ (\vec r, \vec s)$.  Therefore, one
application of the Littlewood-Paley inequalities completes the proof.
For any collection $ \mathbb M$ as above,
let $ \mathbb M _{a}$ be the $ (\vec r, \vec s) \in \mathbb M$ where the
maximum in the third coordinate is $ a$, $ \max \{r_3,s_3\}=a$.  Note that this
can only consist, at most, of two pairs of vectors.
\begin{align*}
\norm \operatorname {SumProd} (\mathbb M).p.
\lesssim  \sqrt p
\NOrm \sum _{a} \operatorname {SumProd} (\mathbb M _a)^2 .p/2. ^{1/2}
 \lesssim \sqrt p \cdot \sqrt n \,.
\end{align*}
\end{proof}

Fix a dimension $ d\ge 3$.
Let $ \mathbb B $ be the collection which satisfies the conditions associated with
\eqref{e.B} that contains $ \mathbb C $.  We introduce a conditional expectation
into the argument, to gain some additional symmetry.
 Let
 $ \mathcal F _{a,b}$ be the dyadic sigma field in the second and third coordinates
 generated by dyadic rectangles of side lengths $ 2 ^{-a-1}$ and $ 2 ^{-b-1}$ respectively.

We have this equality.
\begin{equation}\label{e.==}
 \sum _{\substack{ (\vec r,\vec s)\in \mathbb C \\ r_2=a\,,\, s_3=b  }}
f _{\vec r} \cdot f _{\vec s}
=
\mathbb E \Bigl(
 \sum _{\substack{ (\vec r,\vec s)\in \mathbb B \\ r_2=a\,,\, s_3=b  }}
f _{\vec r} \cdot f _{\vec s} \,\vert\, \mathcal F _{a,b}\Bigr) -
\operatorname {SumProd} ( \mathbb D _{a,b}),
\end{equation}
where $ \mathbb D _{a,b}$ consists of pairs of vectors $ (\vec r, \vec s)\in
\mathbb B $ such that $r_1=s_1 $, $ a=r_2=s_2$ and $ b=r_3=s_3$.
In three dimensions, the set $ \mathbb D _{a,b}$ is empty, since the requirements
for a pair of vectors being in the set $ \mathcal D _{a,b}$
forces $ \vec r=\vec s$, a contradiction.

Assuming that $ d>3$, using the assumption of $ \mathcal B (d-2)$ (
in the case of $d=4$ we just apply the Littlewood-Paley inequality
in the last coordinate), we see that
\begin{equation}\label{e.CDab}
\norm \operatorname {SumProd} ( \mathbb D _{a,b}) .p/2. \lesssim p
^{d-5/2} \cdot  n ^{d-7/2}\,.
\end{equation}
Here, we have `lost two dimensions' due to the roles of $ a, b$.
Therefore, using a trivial estimate in the parameters $ a,b$,
\begin{equation*}
p\NORm \Biggl[ \sum _{a,b} \operatorname {SumProd} ( \mathbb D
_{a,b}) ^2    \Biggr] ^{1/2} .p. \lesssim p ^{ d -3/2} n ^{d-5/2}\,.
\end{equation*}
This estimate is smaller than what the other terms  will give us.

\smallskip

Therefore, using \eqref{e.FS} we can estimate
\begin{equation}\label{e.3c}
\norm \operatorname {SumProd} (\mathbb C ).p. \lesssim p ^{ d -3/2}
n ^{d-5/2} + p ^2  \NORm \sum _{a,b} \ABs{ \sum _{\substack{ (\vec
r,\vec s)\in \mathbb B \\ r_2=a\,,\, s_3=b  }} f _{\vec r} \cdot f
_{\vec s} } ^2 . p/2. ^{1/2}.
\end{equation}
We concentrate on the latter term, and in particular expand the
square.
\begin{align}\label{e.CC1}
\sum _{a,b} \ABs{ \sum _{\substack{ (\vec r,\vec s)\in \mathbb B \\
r_2=a\,,\, s_3=b  }} f _{\vec r} \cdot f _{\vec s} } ^2 & \lesssim n
^{2d-3}
\\& \label{e.CC2}\quad +
 \operatorname {SumProd} (\mathbb B'_1)+\operatorname {SumProd} (\mathbb B'_2)
\\& \label{e.CC3}\quad +
\operatorname {SumProd} (\mathbb B'')
\end{align}
where these terms arise as follows.  In forming the square on the
left in (\ref{e.CC1}), we have two pairs $ (\vec r,\vec s), (\vec
{\underline r},\vec {\underline s}) \in \mathbb C $ with $ r_2=
r'_2$ and $ s_3=s'_3$. We form the product
\begin{equation} \label{e.cF}
f _{\vec r} \cdot f _{\vec s} \cdot f _{\vec {\underline r}} \cdot f
_{\vec {\underline s}}
\end{equation}
\begin{itemize}
\item If the two pairs are equal, the product  in (\ref{e.cF}) is one.  There are $ \lesssim  n ^{2d-3}$
ways to select such pairs.  This is the right hand side of
(\ref{e.CC1}).
\item The collection $\mathbb B'_1$ consists of vectors such that $ \vec r=\vec {\underline r}$ but
$ \vec s\neq \vec {\underline s}$, the product in (\ref{e.cF}) is
equal to $ f _{\vec s} \cdot f _{\vec {\underline s}}$ ($\mathbb
B'_2$ is defined symmetrically). Notice that necessarily we have $
s_1=s'_1$, which is equal to $r_1$, and $ s_3=s'_3$. Let us set
\begin{equation*}
\mathbb B'_c = \{ (\vec s,\vec {\underline s})\mid s_1=\underline
s_1 = c;\, s_3=\underline s_3\}\,.
\end{equation*}
 We have `lost' one parameter in $\mathbb B'_c$ and have one more coincidence, therefore, we can apply the induction hypothesis
  $ \mathcal B (d-1)$ to see that
\begin{equation*}
\norm \operatorname {SumProd} (\mathbb B'_c ) .p. \lesssim p
^{d-3/2} n ^{d-5/2}\,.
\end{equation*}
It is easy to see that
\begin{equation*}
\operatorname{SumProd}(\mathbb B'_1 ) = \sum_{\vec r \in \mathbb
H_{n_1}} \operatorname{SumProd}(\mathbb B'_{r_1} ).
\end{equation*}
Thus we have
\begin{equation*}
\Norm \operatorname{SumProd}(\mathbb B'_1 ).p.  \le \sum_{\vec r \in
\mathbb H_{n_1}} \Norm \operatorname{SumProd}(\mathbb B'_{r_1} ).p.
\le n^{d-1} \cdot p ^{d-3/2} n ^{d-5/2} =p ^{d-3/2} n ^{d-7/2}\,.
\end{equation*}

 This controls the  term  in (\ref{e.CC2}).

\item The last term arises from two pairs of vectors
$ (\vec r,\vec s), (\vec {\underline r},\vec s) \in \mathbb C$ that consist of four distinct vectors.
Let us set
\begin{equation*}
\mathbb B''
=
\{ (\vec r,\vec s,\vec {\underline s},\vec {\underline r})\mid
(\vec r,\vec s), (\vec {\underline r},\vec {\underline s}) \in \mathbb C\,,\ \vec r\neq \vec {\underline r}\,,\
\vec s\neq \vec {\underline s} \}
\end{equation*}
Here, for the sake of cleaner graphics, we have deliberately written $ \vec s, \vec {\underline s}$
as the middle two vectors in the four-tuples in $ \mathbb B''$.
\end{itemize}

 \begin{figure}
 \begin{center}
  \begin{tikzpicture}
  \draw[step=1cm] (0,0) grid (1,4);
  \draw[step=1cm] (1.99,0) grid (4,4);
  \draw[step=1cm] (4.99,0) grid (6,4);
  \draw (.5,4.25) node {$ \vec r$};
  \draw (2.5,4.25) node {$ \vec s$};
  \draw (3.5,4.25) node {$ \vec {\underline s}$};
  \draw (5.5,4.25) node {$ \vec {\underline r}$};
  \draw (.5,-.3) node  (n1) {$ n_1$};
  \draw (2.5,-.3) node (n2) {$ n_2$};
  \draw (3.5,-.3) node  (n3){$ n_2$};
  \draw (5.5,-.3) node (n4) {$ n_1$};

  \draw (9,-.3) node (n5) {lengths of the vectors};
 \path[->] (n5) edge [out=-90, in=0] (n4);
  \draw[thick,*-*] (.4,3.5) -- (2.6,3.5) node [midway, below] {$ {}_{F_1}$};
  \draw[thick,*-*] (3.4,3.5) -- (5.6,3.5) node [midway, below] {$ {}_{F_2}$};
  \draw[thick,*-*] (2.4,1.5) -- (3.6,1.5);
 \path[thick,*-*] (.4,2.5) edge [out=-15,in=195] (5.6,2.5);
  \draw[thick] (2.5,-.6) .. controls (2.8, -.8) and (3.2, -.8) ..  (3.5, -.6)
  node [midway,below] {inside};
  \draw[thick] (.5,-.7) .. controls (2.5, -1.8) and (3.5, -1.8) ..  (5.5, -.7)
  node [midway,below] {outside};
   \end{tikzpicture}

  \end{center}
 \caption{The Decomposition of $ \mathbb B'' _{F_1,F_2}$, in the four dimensional case.
 Note that the coincidences are indicated by the connected black circles.}
  \label{f.FF}
 \end{figure}
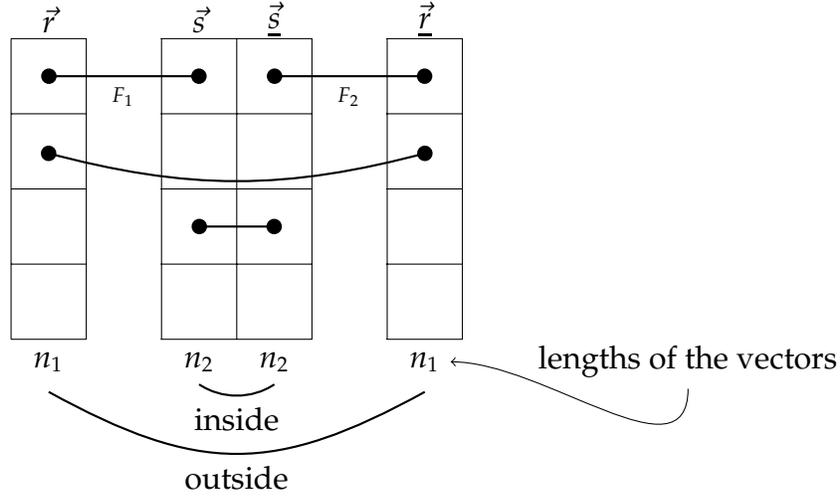

It remains to bound the term in \eqref{e.CC3}.  We reduce this
four-fold product back to a product of two-fold products. For
integers $ F_1, F_2$, let $ \mathbb B _{F_1,F_2} '' $ be those $
(\vec r,\vec s,\vec {\underline s},\vec {\underline r})\in \mathbb
B''$ with $ r_1=s_1=F_1$ and $ {\underline r}_1={\underline
s}_1=F_2$.   Let
$
\mathbb B'' _{\textup{outside}, F_1, F_2}
$
be the projection of four-tuples in  $ \mathbb B _{F_1,F_2} ''$ onto the
first and fourth coordinates, and $\mathbb B'' _{\textup{inside}, F_1, F_2}  $
the projection onto the second and third coordinates.
See Figure~\ref{f.FF}.


For any pair $ (\vec r,\vec {\underline r})\in \mathbb B'' _{\textup{outside}, F_1, F_2} $,
and any two pairs
\begin{equation*}
(\vec s,\vec {\underline s})\,,\; (\vec \sigma ,\vec {\underline \sigma})
\in
\mathbb B'' _{\textup{inside}, F_1, F_2}\,,
\end{equation*}
we have
\begin{equation*}
(\vec r,\vec s, \vec {\underline s},\vec {\underline r})\,,\;
(\vec r,\vec \sigma , \vec {\underline \sigma},\vec {\underline r})
\in \mathbb B'' _{F_1,F_2}\,.
\end{equation*}
Therefore, we have the product formula
\begin{equation*}
\operatorname {SumProd} (\mathbb B _{F_1,F_2} '')
=
\operatorname {SumProd} (\mathbb B'' _{\textup{outside}, F_1, F_2} )
\times
\operatorname {SumProd} (\mathbb B'' _{\textup{inside}, F_1, F_2} )\,.
\end{equation*}

Notice that the pairs of vectors in $ \mathbb B''
_{\textup{outside}, F_1, F_2} $ have their first coordinates fixed,
and have a coincidence in the second coordinate. The fixed first
coordinates need not be the same, so that the lengths of the
remaining coordinates are, in general, distinct.  Still, we  may
conclude that
\begin{equation*}
\norm
\operatorname {SumProd} (\mathbb B'' _{\textup{outside}, F_1, F_2} ) .p.
\lesssim
p ^{d-3/2} n ^{d-5/2}\,.
\end{equation*}
This estimate is uniform in $ F_1,F_2$.  In the case of dimension $ d=3$,
this follows from Lemma~\ref{l.M}, while for $ d>3$ it follows from the
induction hypothesis.
A similar inequality holds for
$ \mathbb B'' _{\textup{inside}, F_1, F_2} $.

Therefore,
we can estimate the term in \eqref{e.CC3} as follows:
\begin{align*}
\NOrm  \operatorname {SumProd} (\mathbb B'') . p/2. ^{1/2} &
\lesssim p n \sup _{F_1,F_2} \norm \operatorname {SumProd} (\mathbb
B'' _{\textup{outside}, F_1, F_2} ) \times \operatorname {SumProd}
(\mathbb B'' _{\textup{inside}, F_1, F_2} ) .p/2. ^{1/2}
\\
& \lesssim p n \sup _{F_1,F_2} \norm \operatorname {SumProd}
(\mathbb B'' _{\textup{outside}, F_1, F_2} ) .p. ^{1/2}\times \norm
\operatorname {SumProd} (\mathbb B'' _{\textup{inside}, F_1, F_2} )
.p.^{1/2}
\\ &
\lesssim (pn) ^{d-3/2}\,.
\end{align*}
Our proof is complete.  Assuming $ \mathcal B (d-1)$, $ d\ge 4$,  we
have proved $ \mathcal C (d)$.  We have also proved $ \mathcal C
(3)$. The fact that $ \mathcal B (3)$ holds follows from the
argument below.

\subsection*{Assuming $ \mathcal C (d)$ and $ \mathcal D (d)$, we prove $ \mathcal B (d)$. }

Fix $ p,n\ge 3$, a collection of  vectors $ \mathbb B $ and
$ \mathsf r$ functions which achieve the supremum in \eqref{e.B}.  Write this collection as
\begin{equation*}
\mathbb B =\mathbb D \cup \bigcup _{2\le i\neq j\le d} \mathbb C _{i,j}
\end{equation*}
where $ \mathbb C _{i,j}$ consists of those pairs $ (\vec r,\vec s)\in \mathbb B $
such that $ i$ is the first coordinate for which $ r_i>s_i$ and $ j$ is the first
coordinate for which $ r_j<s_j$.  Then, the collections $ \mathbb C _{i,j}$
are pairwise disjoint, and the collection $ \mathbb D $ consists of all pairs
not in some $ \mathbb C _{i,j}$.
Thus,
\begin{equation*}
\operatorname {SumProd} (\mathbb B )
=
\operatorname {SumProd} (\mathbb D )
+
\sum _{2\le i\neq j\le d} \operatorname {SumProd} (\mathbb C _{i,j}) \,.
\end{equation*}

After a harmless permutation of indices, the inequalities $ \mathcal C (d)$
apply to the collections $ \mathbb C _{i,j}$.  The (unconditional) inequalities
$ \mathcal D$ apply to the collection $ \mathbb D $.  The proof is complete.

\section{Corollaries of the Beck Gain}

 Theorem~\ref{t.LP} implies an exponential estimate of order
 $ \operatorname {exp} (L ^{2/ (d-1)})$ for sums of $ \vec r$ functions.
In fact, we can derive a \emph{subgaussian} estimate for such sums, for moderate deviations,
and moreover, in order to have a gain of order $ n ^{c/d ^2 }$ in our Main Theorem, we need
to use this estimate.

\begin{theorem}\label{t.better} Using the notation of \eqref{e.rho} and \eqref{e.G_t},
we have this estimate, valid for all $ 1\le t\le q$.
\begin{equation}\label{e.better}
\norm  \rho F_t  .p.  \lesssim  \sqrt p \,  \,, \qquad 1\le p \le c
n ^{\frac {1 - 2 \varepsilon} {2d-1}} \,.
\end{equation}
As a consequence, we have the distributional estimate
\begin{equation}\label{e.partialX2}
\mathbb P ( \lvert  \rho F _t\rvert  >x ) \lesssim \operatorname
{exp} (- c x ^2 )\,, \qquad x< c  n ^{  \frac {1- 2\varepsilon }
{4d-2}}\,.
\end{equation}
Here $ 0<c<1$ is an absolute constant.
\end{theorem}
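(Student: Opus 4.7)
\medskip

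The plan is to prove the $L^p$ estimate \eqref{e.better} by the moment method, and then obtain the tail bound \eqref{e.partialX2} as a routine consequence via Markov's inequality. Write $p=2m$ (taking $m$ a positive integer; the general $p$ follows by interpolation). Then
\begin{equation*}
\Norm \rho F_t.2m.^{2m} = \rho^{2m} \sum_{\vec r_1,\dotsc,\vec r_{2m} \in \mathbb A_t} \mathbb E \prod_{i=1}^{2m} f_{\vec r_i}\,.
\end{equation*}
By the remark following Proposition~\ref{p.productsofhaars}, the expectation on the right vanishes unless, for every coordinate $k$, the minimum of $\{r_{i,k} : 1\le i\le 2m\}$ is attained at least twice.

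First I would isolate the \emph{Wick} (pairing) contribution: those $2m$-tuples that partition into $m$ pairs $\{a_j,b_j\}$ with $\vec r_{a_j}=\vec r_{b_j}$. Since $f_{\vec r}^2\equiv 1$, each such tuple contributes $1$, so the total is $(2m-1)!!\cdot |\mathbb A_t|^m$. Because $|\mathbb A_t|\sim q^{-1} n^{d-1}$ and $\rho^2=q\, n^{-(d-1)}$, the factors balance and the Wick contribution is $\lesssim m^m \lesssim p^{p/2}$, which is precisely the subgaussian target.

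The main obstacle — and the reason for the upper bound on $p$ — is to control all \emph{non-Wick} tuples. Every such tuple that survives the mean-zero sieve contains at least one genuine coincidence that is not part of a matched pair: either two distinct vectors $\vec r_i \ne \vec r_j$ share some (but not all) coordinates, or three or more vectors are tied together through a nontrivial pattern of minima. I would classify the surviving tuples by their coincidence graph $G$ on $\{1,\dotsc,2m\}$, and for each $G$, reduce the expectation to a product of Beck-gain factors by iterating Lemma~\ref{l.SimpleCoincie} (and, when three or more vectors interact in a single coordinate, by the multi-linear extension Theorem~\ref{l.admissible<}). Each elementary coincidence replaces a trivial pair-bound $\sim n^{d-1}$ by the gain-bound $\sim p^{d-1/2} n^{d-3/2}$, contributing an effective factor $p^{d-1/2} n^{-1/2}$, while the combinatorics of choosing such a coincidence among $2m$ vectors costs at most $m^{O(1)}$ per coincidence.

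Putting these estimates together, the non-Wick contribution to $\|\rho F_t\|_{2m}^{2m}$ is bounded by $m^m$ (the Wick bound) times a geometric sum in the number of defects $k\ge 1$, of the form $\sum_{k\ge 1} (C p^{2d-1} n^{-(1-2\varepsilon)})^k$; the exponent $1-2\varepsilon$ (rather than $1$) comes from the sizes $|I_t| \sim n^{1-\varepsilon}$ of the blocks, which reduce the geometric savings per coincidence. This series is dominated by the Wick contribution as long as $p^{2d-1} \le c\, n^{1-2\varepsilon}$, which is precisely the range $p \le c\, n^{(1-2\varepsilon)/(2d-1)}$ in \eqref{e.better}. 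The main technical work lies in this bookkeeping, in particular in verifying that no higher-order cluster coincidence outperforms the combinatorial cost of selecting it. Once \eqref{e.better} is in hand, Chebyshev's inequality gives
\begin{equation*}
\mathbb P(|\rho F_t|>x) \le x^{-p}\,\|\rho F_t\|_p^p \le (C\sqrt p/x)^p\,,
\end{equation*}
and choosing $p \sim x^2/(eC)^2$ yields $\exp(-cx^2)$ whenever the optimizing $p$ lies in the admissible range, i.e., whenever $x \le c\, n^{(1-2\varepsilon)/(4d-2)}$. This gives \eqref{e.partialX2}.
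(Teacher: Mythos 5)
Your plan (moment method plus a classification of coincidence patterns) is genuinely different from the paper's argument, and several pieces of it are sound: the Wick/pairing count is right, since $\rho^{2}\,\sharp\mathbb A_t\simeq 1$ gives a contribution $\lesssim (2m-1)!!\lesssim p^{p/2}$, and your final arithmetic reproduces the correct threshold $p^{2d-1}\lesssim n^{1-2\varepsilon}$, as does the Chebyshev step giving \eqref{e.partialX2} from \eqref{e.better}. But the core of the proof is missing. The entire difficulty is the bound on the non-Wick tuples, and you dispose of it by asserting that "iterating Lemma~\ref{l.SimpleCoincie}" and invoking Theorem~\ref{l.admissible<} yields an effective factor $p^{d-1/2}n^{-1/2}$ per coincidence at combinatorial cost $m^{O(1)}$. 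That is precisely the statement that needs proof, and the cited results do not deliver it as stated: Lemma~\ref{l.SimpleCoincie} is an $L^p$ bound on a \emph{sum} over pairs with a coincidence (its strength is cancellation inside the sum, so it cannot be applied term-by-term to individual expectations $\mathbb E\prod_i f_{\vec r_i}$), and Theorem~\ref{l.admissible<} is built for vectors drawn from \emph{distinct} blocks $\mathbb A_{v_1},\dotsc,\mathbb A_{v_k}$ with no coincidences in the first coordinate, whereas in your expansion all $2m$ vectors lie in the same block $\mathbb A_t$ and first-coordinate coincidences are ubiquitous (the pairing terms are full coincidences). Organizing the $2m$-fold sum so that the coincidence sums stay intact and factor into pieces to which a Beck-gain estimate applies is a substantial piece of work (comparable to the paper's Sections on longer coincidences), not a routine iteration; in addition, your mean-zero sieve is stated with the wrong extremum — it is the \emph{finest} scale, i.e.\ the maximal $r_{i,k}$ (minimal side length), that must be attained at least twice in every coordinate.

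For comparison, the paper's proof avoids the moment expansion entirely and is a few lines: apply the Littlewood--Paley inequality \eqref{e.lp}, with its sharp $\sqrt p$ constant, in the first coordinate only. Since $f_{\vec r}^2\equiv 1$, the resulting square function is pointwise $\sharp\mathbb A_t+\Phi_{t,t,1}$, so that $\lVert \rho F_t\rVert_p\lesssim \sqrt p\,\bigl(1+\lVert \rho^2\Phi_{t,t,1}\rVert_{p/2}^{1/2}\bigr)$, and a single application of the Beck gain \eqref{e.BG} gives $\lVert\rho^2\Phi_{t,t,1}\rVert_{p/2}\lesssim q\,p^{d-1/2}n^{-1/2}\lesssim 1$ exactly in the range $p\le c\,n^{(1-2\varepsilon)/(2d-1)}$; the tail bound then follows as in your last step. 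If you want to salvage your route, the missing ingredient is a self-contained estimate for sums of products of $\mathsf r$ functions from a single block with arbitrary admissible coincidence patterns (including first-coordinate ones), which is not available off the shelf in the paper.
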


To use \eqref{e.partialX2},  we need $ q ^{b}= a ^{b} n ^{\epsilon
\cdot b} < c n ^{  \frac {1 } {4d-6}}$, and so $ \epsilon \simeq 1/d$
is the optimal value for $ \epsilon $ that this proof will give.

\begin{proof}

Recall that
\begin{equation*}
F_t =  \sum _{\vec r\in \mathbb A _t}  f _{\vec r}\,.
\end{equation*}
where  $ \mathbb A _t \coloneqq \{\vec r\in \mathbb H _n \mid r_1\in
I_t\}$, and $ I_t$  in an interval of integers of length $ n/q$, so
that $ \sharp \mathbb A _t \simeq   n ^{d-1} /q \simeq \rho ^{-2}$.

Apply the Littlewood-Paley inequality  in the first coordinate.
This results in the estimate
\begin{align*}
\norm \rho F_t .p.
& \lesssim \sqrt p
\NOrm \Bigl[\sum _{s\in I_j} \Abs{ \rho \sum _{\vec r\,:\, r_1=s} f _{\vec r}} ^2
\Bigr] ^{1/2} .p.
\\
& \lesssim  \sqrt p \norm   1+   \rho ^{2}\Phi _{t,t,1}   .p/2.
^{1/2}
\\
& \lesssim  \sqrt p \Bigl\{ 1 + \norm \rho ^{2}\Phi _{t,t,1}   .p/2.
^{1/2}\Bigr\} \,,
\end{align*}
where $\Phi _{t,t,1}$ is defined in \eqref{e.Phi}. Here it is
important to use the constants in the Littlewood-Paley inequalities
that give the correct order of growth of $ \sqrt p$. Of course the
terms $ \Phi _{t,t,1}$ are controlled by the estimate in
\eqref{e.BG}. In particular, we have
\begin{equation}\label{e.FFFjjj}
\norm \rho ^{2}\Phi _{t,t,1}.p. \lesssim  \frac{ q } {n ^{d-1}}  p
^{d-1/2}  n ^{d-3/2}   \lesssim q\, p ^{d-1/2} n ^{-1/2} \lesssim
1\,.
 \end{equation}
Hence \eqref{e.better} follows.

\smallskip

The second distributional inequality is a well known consequence of the norm
inequality.  Namely, one has the inequality below, valid for all $ x$:
\begin{equation*}
\mathbb P (\rho F_t >x ) \le C ^{p} p ^{p/2} x ^{-p}\,, \qquad 1\le
p \le c  n ^{\frac {1 -2\varepsilon} {2d-1}} \,.
\end{equation*}
If $ x$ is as in \eqref{e.partialX2}, we can take $ p \simeq x ^{2}$ to prove the
claimed exponential squared bound.
 \end{proof}


We shall now use the Beck Gain to prove the crucial $L^2$ estimate
\eqref{e.expq2b} of Lemma \ref{l.technical}. We actually need a
slightly more general inequality:
\begin{lemma}\label{l.expq2b}
We have the following estimate:
\begin{equation}
\sup _{V\subset \{1 ,\dotsc, q\}} \mathbb E \prod _{v \in V}  (1+
\widetilde \rho F_t) ^2 \lesssim \operatorname {exp} (a' q ^{2b})\,.
\end{equation}
\end{lemma}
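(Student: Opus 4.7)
The plan is to prove $\mathbb{E}\prod_{v\in V}(1+\widetilde\rho F_v)^2 \lesssim \exp(a' q^{2b})$ uniformly in $V\subseteq\{1,\dots,q\}$ by expanding the squared product and showing that the ``diagonal'' part contributes exactly the advertised exponential, while all ``coincidence'' remainders are subdominant thanks to the Beck Gain (Lemma \ref{l.SimpleCoincie}).

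First I would expand
\begin{equation*}
\prod_{v\in V}(1+\widetilde\rho F_v)^2 = \sum_{W_1,W_2\subseteq V}\widetilde\rho^{|W_1|+|W_2|}\prod_{v\in W_1}F_v\prod_{v\in W_2}F_v
\end{equation*}
and then expand each $F_v$ as a sum of $\mathsf r$-functions $f_{\vec r}$ with $\vec r\in\mathbb{A}_v$. Taking expectation, a product of $\mathsf r$-functions vanishes as soon as some first coordinate $r_1$ appearing in the selection is the \emph{unique} maximum in coordinate $1$ (remark after Proposition \ref{p.productsofhaars}). Because the intervals $I_v$ partition $\{1,\dots,n\}$, the first coordinates coming from distinct $v$'s lie in disjoint intervals, so a first-coordinate coincidence can only occur between two vectors chosen from the \emph{same} $\mathbb{A}_v$. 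Consequently, for a surviving term, each $v\in W_1\cup W_2$ must appear an even number of times with matching $r_1$'s between the two copies.

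Next, I would isolate the leading ``diagonal'' contribution, obtained when, for each $v$ participating, a single vector $\vec r\in\mathbb{A}_v$ is selected from each copy and these two vectors coincide fully ($\vec r=\vec s$). Since $f_{\vec r}^2\equiv 1$, this produces
\begin{equation*}
\sum_{W\subseteq V}\prod_{v\in W}\widetilde\rho^2\,\#\mathbb{A}_v = \prod_{v\in V}\bigl(1+\widetilde\rho^2\,\#\mathbb{A}_v\bigr).
\end{equation*}
Using $\#\mathbb{A}_v\simeq n^{d-1}/q$ and $\widetilde\rho^2\simeq a^2 q^{2b}n^{-(d-1)}$, each factor is $1+O(a^2 q^{2b-1})$, and since $|V|\le q$, the product is at most $\exp(Ca^2 q^{2b})$. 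Choosing $a$ small makes the constant $a'=Ca^2$ as small as desired, matching the claimed bound.

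The remaining terms correspond to genuine first-coordinate coincidences between \emph{distinct} vectors $\vec r\neq\vec s$ in some $\mathbb{A}_v$ (possibly combined with coincidences in other coordinates and across other $v$'s). These are controlled by the Beck Gain: the $L^2$-norm of such a two-fold coincidence sum is $\|\Phi_{v,v,1}\|_2\lesssim n^{d-3/2}$, so $\widetilde\rho^2\|\Phi_{v,v,1}\|_2\lesssim a^2 q^{2b-1}n^{-1/2}$, a factor $n^{-1/2}$ smaller than the diagonal. Higher-order products of coincidences are organized via the admissible-graph framework of Theorem \ref{l.admissible<}, exactly as in the proof of \eqref{e.neq1}: one sums over admissible graphs $G$ with $|V(G)|=v$, picks up a factor $v^{2dv}[q^{C'}n^{-\eta}]^v$, and verifies that these remainders sum to a quantity absorbed by the leading diagonal. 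The main obstacle is precisely this combinatorial bookkeeping --- enumerating coincidence patterns across the $|V|\le q$ indices and $d$ coordinates, and extracting a uniform $n^{-\eta}$ saving per coincidence --- but the apparatus of Theorem \ref{l.admissible<} was set up for exactly this purpose, so the sum converges and the total remainder is $o(1)$ relative to $\exp(a'q^{2b})$, finishing the proof.
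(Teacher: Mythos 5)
There is a genuine gap, in two places. First, your combinatorial reduction is wrong: from ``the expectation vanishes if the maximal first coordinate is attained uniquely'' it does \emph{not} follow that every participating block must contribute an even number of vectors with matching $r_1$'s. The mean-zero criterion only constrains the \emph{maximal} frequency in each coordinate, so only the top occupied block is forced to produce a first-coordinate match; a lower block may contribute a single vector $\vec t$ whose coordinates $2,\dotsc,d$ are matched or dominated by vectors from \emph{other} blocks (e.g.\ in $d=3$, take $\vec r\neq\vec s$ in $\mathbb A_{v_2}$ with $r_1=s_1$ and a single $\vec t\in\mathbb A_{v_1}$ with $t_2=r_2>s_2$, $t_3=s_3>r_3$, which is consistent with $|\vec t|=|\vec r|=|\vec s|=n$). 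Such odd-multiplicity, cross-block terms survive and are not accounted for in your decomposition. Second, you defer all of these higher-order terms to the admissible-graph machinery of Theorem~\ref{l.admissible<}, ``exactly as in the proof of \eqref{e.neq1}'', but that framework does not apply here: admissible graphs are built on vertex sets $V(G)\subset\{1,\dotsc,q\}$ with \emph{one} vector per block and edge colors $2,\dotsc,d$ only, precisely because $\Psi$ itself has no first-coordinate coincidences. The squared product is exactly the situation where a block contributes \emph{two} vectors and first-coordinate coincidences $\Phi_{v,v,1}$ occur (the paper flags this after \eqref{e.Phi}); handling it would require doubling vertices and adding a color $1$, i.e.\ a new version of the graph lemma that you have not supplied. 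So the proposal's diagonal computation is fine, but the control of the off-diagonal part is not established.

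For comparison, the paper's proof avoids the global expansion entirely. It peels off one factor at a time: holding $x_2,\dotsc,x_d$ fixed and conditioning on the sigma field generated by $F_1,\dotsc,F_{q-1}$, the top factor contributes
\begin{equation*}
\mathbb E\bigl(1+2\widetilde\rho F_q+(\widetilde\rho F_q)^2\,\big|\,\mathcal F\bigr)
=1+a^2q^{2b-1}+\widetilde\rho^2\,\mathbb E(\Phi_{q,q,1}\,|\,\mathcal F)\,,
\end{equation*}
so the only coincidence that ever needs to be estimated is the single-block, first-coordinate one, controlled by the simplest Beck Gain \eqref{e.BG} together with Proposition~\ref{p.condExpect}. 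The recursion for $N(V;2)=\lVert\prod_{t\le V}(1+\widetilde\rho F_t)\rVert_2$ is then closed by a crude $L^4$ bound $N(V;4)\le(Cq)^q$ (from \eqref{e.better}) and H\"older with exponents $q$ and $(1-1/q)^{-1}$, yielding $N(q;2)^2\lesssim(1+a^2q^{2b-1}+Cq^{d+2}n^{-1/2})^q\lesssim\exp(a'q^{2b})$ once $a^2n^{1/2}\gtrsim q^{d+5/2}$. If you want to salvage your global-expansion route, you would need to classify all surviving multi-block coincidence patterns (including odd multiplicities and color-$1$ edges within a block) and prove a Beck-Gain estimate for each; the conditional-expectation induction is what lets the paper bypass that bookkeeping.
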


The supremum over $ V$ will be an immediate consequence of the proof
below, and so we don't address it specifically.

\begin{proof}[Proof of \eqref{e.expq2b}.]

Let us give the  essential initial observation. We expand
\begin{equation*}
\mathbb E \prod _{j=1} ^{q} (1+ \widetilde \rho F_j) ^2
=
\mathbb E \prod _{j=1} ^{q} (1+ 2\widetilde \rho F_j+ (\widetilde \rho F_j) ^2  )\,.
\end{equation*}
Hold the last $d-1$ coordinates, $ x_2,\dots, x_d$, fixed and let $
\mathcal F$ be the sigma field generated by $ F_1 ,\dotsc, F_{q-1}$.
We have
\begin{equation} \label{e.;p}
\begin{split}
\mathbb E \bigl(1+ 2\widetilde \rho F_q+ (\widetilde \rho F_q) ^2
\,\big|\, \mathcal F\bigr) &=1+\mathbb E \bigl((\widetilde \rho F_q) ^2
\,\big|\, \mathcal F\bigr)
\\
&=1+   a ^2  q ^{2b-1}+ \widetilde \rho ^2  \mathbb E (\Phi _{q,q,1} \,\big|\, \mathcal F)\,,
 \end{split}
\end{equation}
where $ \Phi _{q,q,1}$ is defined in \eqref{e.Phi}.
 Then, we see that
\begin{align} \nonumber
\mathbb E \ \prod _{v=1} ^{q} (1+ 2\widetilde \rho F_t+ (\widetilde
\rho F_t) ^2  ) &= \mathbb E \Bigl\{  \prod _{v=1} ^{q-1} (1+
2\widetilde \rho F_t+ (\widetilde \rho F_t) ^2  )\, \times \mathbb E
\bigl(1+ 2\widetilde \rho F_t+ (\widetilde \rho F_t) ^2 \,\big|\,
\mathcal F \bigr)\Bigr\}
\\
&\le  \label{e.;;}
(1+a ^2 q ^{2b-1})
\mathbb E \prod _{v=1} ^{q-1} (1+ 2\widetilde \rho F_t+ (\widetilde \rho F_t) ^2  )
\\  \label{e.;;;;}
& \qquad + \mathbb E \abs{ \widetilde\rho ^{2} \Phi _{q,q,1} } \cdot
\prod _{v=1} ^{q-1} (1+ 2\widetilde \rho F_t+ (\widetilde \rho F_t)
^2  )
\end{align}
This is the main observation: one should induct on \eqref{e.;;},
while treating the term in \eqref{e.;;;;} as an error, as the Beck
Gain estimate \eqref{e.BG} applies to it.

Let us set up notation to implement this line of approach.  Set
\begin{equation*}
N (V;r) \coloneqq \NOrm \prod _{t=1} ^{V} (1+ \widetilde \rho F_t)
.r. \,, \qquad   V=1 ,\dotsc, q\,.
\end{equation*}
We will obtain a very crude estimate for these numbers for $ r=4$.
Fortunately, this is relatively easy for us to obtain. Namely,
$ q$ is small enough that we can use the inequalities \eqref{e.better} to see that
\begin{align*}
N (V; 4)&\le
\prod _{v=1} ^{V} \norm 1+ \widetilde \rho F_t .4V.
\\
& \le ( 1 +  C q ^{1/2+b}  ) ^{V}
\\
& \le (Cq) ^{   q}\,.
\end{align*}
We have the estimate below from H\"older's inequality
\begin{equation}\label{e.killq^q}
N (V;2(1-1/ q) ^{-1} )\le N (V;2) ^{1-1/ q} \cdot N (V; 4) ^{1/
q}\,.
\end{equation}

We see that \eqref{e.;;}, \eqref{e.;;;;} and \eqref{e.killq^q}
give us the inequality
\begin{equation}\label{e.=.=}
\begin{split}
N (V+1; 2) ^2  & \le (1+a ^2 q ^{2b-1})    N (V; 2) ^2 + C  \cdot  N
(V; 2 (1-1/q) ^{-1} ) ^2   \cdot \norm  \widetilde \rho ^{2} \Phi
_{V+1,V+1,1} .  q.
\\
& \le (1+a ^2 q ^{2b-1})    N (V; 2)^2 + C  N (V;2) ^{2-2/q} \cdot N
(V; 4) ^{2/ q} \norm  \widetilde \rho ^{2} \Phi _{V+1,V+1,1} .  q.
\\
& \le (1+a ^2 q ^{2b-1})   N (V; 2) ^2 + C   q ^{d+2 } n ^{-1/2} N
(V;2) ^{2-2/ q} \,.
\end{split}
\end{equation}
In the last line we have used the inequality \eqref{e.BG}. Of
course we only apply this as long as $ N (V; 2)\ge 1$.  Assuming
this is true for all $ V\ge 1$, we see that
\begin{equation*}
N (V+1; 2) ^2 \le (1+a ^2 q ^{2b-1} + C   q ^{d+2 } n ^{-1/2} ) N
(V; 2) ^2 \,.
\end{equation*}
And so, by induction,
\begin{align*}
N (q;2)&\lesssim  (1+a ^2  q ^{2b-1}+  C   q ^{d+2 } n ^{-1/2} )
^{q/2} \lesssim \operatorname e ^{ 2a\, q ^{2b}}\,.
\end{align*}
Here, the last inequality will be true for large $ n$, provided that
$\varepsilon$ in the definition of $q$ \eqref{e.q} is small. Indeed,
we need
\begin{align*}
a ^2  q ^{2b-1}&\ge   C   q ^{d+2} n ^{-1/2}
\end{align*}
Or equivalently,
\begin{equation*}
a ^2 n ^{1/2}\gtrsim q ^{d+5/2 }\,.
\end{equation*}
Comparing to the definition of $ q$ in \eqref{e.q}, we see that the
proof is finished.
\end{proof}

One should notice that the results of this section suggest that our
methods give a gain of the order $\frac1d$.

\section{The Beck gain with fixed parameters.}

We will need to analyze longer products of $ \mathsf r$ functions.
These longer products   will be reduced to the case of a  a slightly
more general version of the Beck Gain Lemma \ref{l.SimpleCoincie}.
Namely, we will consider sums of products of two $ \mathsf r$
fucntions, but impose the additional restriction for some
coordinates in the pair of vectors to have  fixed values. Let $\vec
a \in \mathbb N^{F_1}$ and  $\vec b \in \mathbb N^{F_2}$ be integer
vectors with lengths $|\vec a|, |\vec b| <n$. We will be estimating
the quantity:

\begin{equation}\label{e.Bf}
\mathcal B (F_1, F_2) =  \sup_{\vec a, \vec b, j_1<j_2} \sup
_{\mathbb B } \norm \operatorname {SumProd} (\mathbb B ).p. \,,
\qquad d,n,p\ge 3\,.
\end{equation}
The inner supremum is formed over all   $ \mathbb B \subset \mathbb
H _{n} \times \mathbb H _{n}$ and all $ \mathsf r$ functions subject
to these conditions:

\begin{itemize}
\item $\vec r \in \mathbb A_{j_1}$, $\vec s \in \mathbb A_{j_2}$,
where $j_1<j_2$ (i.e. $s_1$ is the maximum in the first coordinate.)
\item  There is a coincidence in the second coordinate:
For all $ (\vec r,\vec s)\in \mathbb B $, we have $ \vec r\neq \vec
s$ and  $ r_2=s_2$.
\item For $k=1,\dots,F_1$, we have $r_{k+2} = a_k$. ($F_1$
coordinates of $\vec r$ are fixed.)
\item For $k=1,\dots,F_2$, we have $s_{F_1+k+2} = b_k$. ($F_2$
coordinates of $\vec s$ are fixed, and these coordinates
are distinct from the other vector.)
\end{itemize}

We have the following estimate, which gives an average  Beck Gain
of $ n ^{1/8}$ for each of the two functions in the product.

\begin{lemma}\label{l.Bf} We have the inequality below valid for all dimensions $ d\ge3$.
\begin{equation*}
\mathcal B (F_1, F_2) \lesssim p ^{d- 1 -
\frac{F_1+F_2}{2}-\frac{1}{4}} n ^{d- 1 -
\frac{F_1+F_2}{2}-\frac{1}{4}}\,, \qquad p,n\ge 3\,.
\end{equation*}
\end{lemma}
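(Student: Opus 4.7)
The proof will mirror the inductive architecture behind Lemma~\ref{l.SimpleCoincie}, but tracked simultaneously in three parameters $(d, F_1, F_2)$ rather than one. The key observation is that the target bound can be rewritten as
\[
\mathcal B(F_1,F_2) \lesssim (pn)^{d-1-(F_1+F_2)/2-1/4}\,,
\]
so that each additional fixed coordinate should cost exactly a factor of $(pn)^{1/2}$ beyond the baseline Beck Gain.  My plan is to prove this by induction on $F_1+F_2$, taking Lemma~\ref{l.SimpleCoincie} together with the separation $j_1<j_2$ as the base of the induction.

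For the base case $F_1=F_2=0$, the collection $\mathbb B$ is a subcollection of the set defining $\Phi_{j_1,j_2,2}$, so Lemma~\ref{l.SimpleCoincie} already yields $p^{d-1/2}n^{d-3/2}$.  To upgrade this to $(pn)^{d-5/4}$ I would use the strict separation in the first coordinate: grouping pairs by the fixed value of $s_1=b$ and applying Littlewood--Paley in coordinate~$1$ (which is legal because $s_1$ realizes the maximum of coordinate~$1$ across the pair) replaces the single sum by $\sqrt p\,\NOrm[\sum_b G_b^2]^{1/2}.p.$, where $G_b$ is the restricted sum.  Expanding the square reproduces the four-fold product analysis from the proof of Lemma~\ref{l.B}, with "outside" and "inside" factorizations as in Figure~\ref{f.FF}; applying Lemma~\ref{l.SimpleCoincie} to each factor (now in dimension $d-1$, thanks to the collapsed coord~$1$) and combining with the $\sqrt p$ from Littlewood--Paley produces the targeted $(pn)^{d-5/4}$ bound.

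For the inductive step, I would peel one fixed coordinate at a time.  Suppose $r_{k+2}=a_k$ is a fixed coordinate of $\vec r$ (the $\vec s$ case being symmetric).  Factor $f_{\vec r}(x)=f_{a_k}(x_{k+2})\,\tilde f_{\vec r\,'}(x')$, treat the $x_{k+2}$ dependence of the $\vec s$ sum via Littlewood--Paley in coordinate $k+2$ (grouping pairs by the value of $s_{k+2}$), and expand the resulting square into a four-fold product.  After factoring $\operatorname{SumProd}$ along coord $k+2$, the two "halves" become collections of the same $\mathcal B$-type with $F_1$ decreased by one, and the inductive hypothesis applies.  The accounting: each peeling costs $\sqrt p$ from Littlewood--Paley and $\sqrt n$ from the reduction in free parameters, exactly the $(pn)^{1/2}$ loss predicted by the claimed exponent.

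The main obstacle I anticipate is that this peeling can create pairs of subcollections in which additional coincidences appear (when $s_{k+2}$ coincides with some other free $\vec s$ coordinate after squaring), so the inductive step must be set up to also control such "extra-coincidence" collections --- morally, the same structural difficulty resolved by the three simultaneous families $\mathcal B,\mathcal C,\mathcal D$ in the proof of Lemma~\ref{l.SimpleCoincie}.  Care must also be taken at the base dimension $d=3$, where the ancillary estimate in Lemma~\ref{l.M} on collections with fixed second coordinate must be promoted to allow $F_1+F_2>0$ fixed coordinates; this should follow from a single extra Littlewood--Paley application in the coordinate carrying the maximum, exactly as in the proof of Lemma~\ref{l.M}.
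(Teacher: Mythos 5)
Your base-case outline is close in spirit to what is actually needed: Littlewood--Paley in the first coordinate is indeed legitimate because $j_1<j_2$ forces $s_1$ to be the largest first-coordinate parameter, and one then expands the square and factors the resulting four-fold sums into an $(\vec r,\vec{\underline r})$ collection and an $(\vec s,\vec{\underline s})$ collection. But two things go wrong. First, after squaring, only the $(\vec s,\vec{\underline s})$ factor carries a coincidence (namely $s_1=\underline s_1$); the $(\vec r,\vec{\underline r})$ factor has no forced equality and can only be estimated by trivial parameter counting, roughly $(pn)^{d-2-F_1}$. Your plan to ``apply Lemma~\ref{l.SimpleCoincie} to each factor'' is therefore not available, and this is exactly why the lemma claims only a gain of $n^{1/4}$ overall (that is, $1/8$ per function) rather than the larger gain a double application would produce --- a sign that the location of the coincidences after squaring has not been verified. (Note also that in the variable $n$ the bound $p^{d-1/2}n^{d-3/2}$ of Lemma~\ref{l.SimpleCoincie} is already \emph{stronger} than $n^{d-5/4}$; no upgrade in $n$ is needed for the case $F_1=F_2=0$, only bookkeeping in $p$.)

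Second, and more seriously, the inductive step on $F_1+F_2$ does not close. A fixed coordinate leaves nothing to sum over in that variable: an $\mathsf r$ function does not factor as $f_{a_k}(x_{k+2})\,\tilde f_{\vec r\,'}(x')$, because the signs $\operatorname{sgn}(\alpha(R))$ do not split across coordinates; Littlewood--Paley in coordinate $k+2$ is not justified, since $s_{k+2}$ need not dominate $r_{k+2}=a_k$ (this is precisely the ordering problem that forces the conditional-expectation device in the proof of Lemma~\ref{l.SimpleCoincie}, which your peeling step leaves unaddressed); and the exponent accounting runs backwards: the claimed bound at $(F_1-1,F_2)$ is \emph{larger} by $(pn)^{1/2}$ than at $(F_1,F_2)$, so a peeling that ``costs $(pn)^{1/2}$'' and then invokes the inductive hypothesis overshoots the target by a full power of $pn$ for each peeled coordinate. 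In fact no induction on $F_1+F_2$ is needed: the fixed coordinates are never peeled, they simply reduce the number of free parameters in each of the two factors after squaring. One obtains $(pn)^{d-2-F_1}$ for the $\vec r$-pairs by counting, and $p^{d-3/2-F_2}n^{d-5/2-F_2}$ for the $\vec s$-pairs from Lemma~\ref{l.SimpleCoincie} (using $s_1=\underline s_1$ together with $r_2=s_2=c\neq\underline c=\underline r_2=\underline s_2$), while the diagonal case $c=\underline c$ is handled by counting the lost parameter; multiplying the square roots of these with the factor $\sqrt p\,n$ coming from the Littlewood--Paley step yields the exponent $d-1-\tfrac{F_1+F_2}{2}-\tfrac14$ for all $F_1,F_2$ at once.
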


\begin{proof} We will reduce this situation to the Beck Gain proven
before. Let $\mathbb B$ be as above. First of all, we shall apply
the Littlewood-Paley inequality in the first coordinate. Notice that
the maximum in this coordinate is automatically $s_1$.

\begin{equation}\label{e.3cf}
\norm \operatorname {SumProd} (\mathbb B ).p. \lesssim
 \sqrt{p}   \NORm \sum _{c\in I_{j_2}} \ABs{ \sum
_{\substack{ (\vec r,\vec s)\in \mathbb B \\  s_1=c  }} f _{\vec r}
\cdot f _{\vec s} } ^2 . p/2. ^{1/2}
\end{equation}
We concentrate on the latter term, and in particular expand the
square.
\begin{align}\label{e.3cf1}
 \sqrt{p}   \NORm \sum _{c\in \mathbb I_{j_2}} \ABs{ \sum
_{\substack{ (\vec r,\vec s)\in \mathbb B \\  s_1=c  }} f _{\vec r}
\cdot f _{\vec s} } ^2 . p/2. ^{1/2} & =  \sqrt{p}   \NORm  \sum
_{\substack{ (\vec r,\vec s, \vec {\underline r}, \vec {\underline s})\in \mathbb B\times \mathbb B \\
s_1=\underline s_1 }} f _{\vec r} \cdot f _{\vec s}\cdot
f_{\vec{\underline r}} \cdot f_{\vec {\underline s}}  . p/2.
^{1/2}\\
\label{e.1} & \le \sqrt{p} n \max_{c\neq \underline c} \NORm  \sum
_{\substack{ (\vec r,\vec s, \vec {\underline r}, \vec {\underline s})\in \mathbb B\times \mathbb B \\
s_1=\underline s_1;\, r_2=s_2=c;\, \underline r_2= \underline
s_2=\underline c }} f _{\vec r} \cdot f _{\vec s}\cdot
f_{\vec{\underline r}} \cdot f_{\vec {\underline s}}  . p/2.
^{1/2}
\\\label{e.2} & + \sqrt{p} \sqrt{n} \max_{c} \NORm  \sum
_{\substack{ (\vec r,\vec s, \vec {\underline r}, \vec {\underline s})\in \mathbb B\times \mathbb B \\
s_1=\underline s_1;\, r_2=s_2= \underline r_2= \underline s_2= c }}
f _{\vec r} \cdot f _{\vec s}\cdot f_{\vec{\underline r}} \cdot
f_{\vec {\underline s}}  . p/2. ^{1/2}
\end{align}

We start with the estimates for the first term above \eqref{e.1}:

\begin{align*}
& \sqrt{p} n \max_{c\neq \underline c} \NORm  \sum
_{\substack{ (\vec r,\vec s, \vec {\underline r}, \vec {\underline s})\in \mathbb B\times \mathbb B \\
s_1=\underline s_1;\, r_2=s_2=c;\, \underline r_2= \underline
s_2=\underline c }} f _{\vec r} \cdot f _{\vec s}\cdot
f_{\vec{\underline r}} \cdot f_{\vec {\underline s}}  . p/2.
^{1/2}\\
&= \sqrt{p} n \max_{c\neq \underline c} \NORM  \Biggl(\sum _{ (\vec
r, \vec {\underline r})\in \mathbb B_1} f _{\vec r} \cdot
f_{\vec{\underline r}} \Biggr) \times  \Biggl(\sum _{ (\vec s, \vec
{\underline s})\in \mathbb B_2} f _{\vec s} \cdot f_{\vec{\underline
s}} \Biggr)  . p/2. ^{1/2}\\
&\le \sqrt{p} n \max_{c\neq \underline c} \NORm  \sum _{ (\vec r,
\vec {\underline r})\in \mathbb B_1} f _{\vec r} \cdot
f_{\vec{\underline r}} .p.^{1/2} \NOrm \sum _{ (\vec s, \vec
{\underline s})\in \mathbb B_2} f _{\vec s} \cdot f_{\vec{\underline
s}} . p. ^{1/2}
\end{align*}
Here $\mathbb B_1$ is defined to consist of pairs $(\vec r, \vec
{\underline r}) \in \mathbb A _{j_1}^2$ which satisfy the following:
\begin{itemize}
\item For $k=1,\dots,F_1$, we have $r_{k+2}= \underline r_{k+2} = a_k$.
\item $r_2=c$, $\underline r_2 =\underline c$.
\end{itemize}
And similarly $\mathbb B_2$ consists of pairs $(\vec s, \vec
{\underline s}) \in \mathbb A _{j_2}^2$ with the properties:
\begin{itemize}
\item For $k=1,\dots,F_2$, we have $s_{k+F_1+2}= \underline s_{k+F_1+2} = b_k$.
\item $s_2=c$, $\underline s_2 =\underline c$.
\item Moreover, we have $s_1=\underline s_1$.
\end{itemize}

Notice that because of the last condition and the fact that $c\neq
\underline c$ (i.e., $\vec s \neq \vec{\underline s}$), the Beck
Gain (Lemma \ref{l.SimpleCoincie}) applies to this family of pairs,
giving a gain of $ n ^{1/2}$, while $\mathbb B_1$ will be estimated
by simple parameter counting, supplying no gain. We have
\begin{align*}
\Norm \operatorname{SumProd}(\mathbb B_1).p. &\lesssim
(pn)^{d-2-F_1} ,
\\
 \Norm
\operatorname{SumProd}(\mathbb B_2).p. &\lesssim p^{d-3/2-F_2}
n^{d-2-F_2-1/2} \,.
\end{align*}
 And thus we can estimate the term \eqref{e.1} by
\begin{align*}
\sqrt{p} n  \NORm  \sum
_{\substack{ (\vec r,\vec s, \vec {\underline r}, \vec {\underline s})\in \mathbb B\times \mathbb B \\
s_1=\underline s_1;\, r_2=s_2=c;\, \underline r_2= \underline
s_2=\underline c }} f _{\vec r} \cdot f _{\vec s}\cdot
f_{\vec{\underline r}} \cdot f_{\vec {\underline s}}  . p/2. ^{1/2}
& \lesssim \sqrt{p} n \left( (pn)^{d-2-F_1}\right)^{1/2} \left(
p^{d-3/2-F_2} n^{d-2-F_2-1/2}\right)^{1/2}\\
& = (pn)^{d-1-\frac{F_1+F_2}{2}-\frac14}.
\end{align*}

The second term \eqref{e.2} satisfies the same bound in $n$. This
can be shown by simple parameter counting, the gain comes from the
loss of one parameter since $c=\underline c$.

We remark that in this version of the Beck gain `error terms' do not
arise, since we apply Littlewood-Paley inequality only in the first
coordinate, where we already have a natural order. Thus we do not
need to use the conditional expectation argument as in the proof of
Lemma \ref{l.SimpleCoincie}.

\end{proof}

\section{The Beck Gain for Longer Coincidences} \label{s.nsd}



 In the present section we treat longer coincidences. This requires
  a careful analysis of the variety
of ways that a product can fail to be strongly distinct. That is, we
need to understand the variety of ways that coincidences can arise,
and how coincidences can contribute to a smaller  norm.
Following Beck, we will use the language of Graph Theory to describe
these general patterns of coincidences.

\subsection*{Graph Theory Nomenclature}

We adopt familiar nomenclature from Graph Theory, although there is
no graph theoretical fact that we need, rather the use of this
language is just a convenient way to do some bookkeeping. The class
of graphs that we are interested in satisfies particular properties.
A \emph{$d-1$ colored graph} $ G$ is the tuple $ (V (G), E_2,
E_3,\dots,E_{d})$, of the \emph{vertex set} $ V (G)\subset \{1
,\dotsc, q\}$, and \emph{ edge sets $ E_2,\,  E_3,\,\dots\, E_{d}$,
of colors $ 2, 3,\dots, d$ respectively}. Edge sets are are subsets
of
\begin{equation*}
E _{j}\subset V (G) \times V (G) - \{ (k,k)\;|\;  k\in V (G)\}\,.
\end{equation*}
Edges are symmetric, thus if $ (v,v')\in E_j$ then necessarily $
(v',v)\in E_j$.

A \emph{clique of color $ j$} is a maximal subset $ Q\subset V (G)$
such that for all $ v\neq v'\in Q$ we have $ (v,v')\in E_j$.  By
\emph{maximality}, we mean that no strictly larger set of vertices
$ Q'\supset Q$ satisfies this condition.

Call a graph $ G$ \emph{admissible} iff
\begin{itemize}
\item  The edges sets, in all $d-1$ colors, decompose into a union of cliques.
\item  If  $ Q_k$'s are  cliques of color $ k$ ($k=2, \dots, d$),
then $\bigcap_{k=2}^d Q_k $ contains at most one vertex.
\item Every vertex is in at least one clique.
\end{itemize}

 A graph $ G$ is \emph{connected } iff for any two vertices
in the graph, there is a path that connects them. A \emph{path} in
the graph $ G$ is a sequence of vertices  $ v_1 ,\dotsc, v_k$ with
an edge of \emph{any color,} spanning adjacent vertices , that is $
(v _{j}, v _{j+1}) \in  \cup_{k=2}^d E_k $.

\subsection*{Reduction to Admissible Graphs}

It is clear that admissible graphs as defined above are naturally
associated to sums of products of $ \mathsf r$ functions. Given
admissible graph $ G$ on vertices $ V$, we set $ X (G)$ to be those
tuples of $ \mathsf r$ vectors
\begin{equation*}
\vec r _v \in \prod _{v\in V} \mathbb A _v
\end{equation*}
so that if $ (v, v')$ is an edge of color $ j$ in $ G$, then $ r
_{v,j}= r _{v',j}$.

We shall introduce the following counting parameter: for an
admissible graph $G$, its index, $ind(G)$, is defined as
\begin{equation}\label{e.ind}
ind(G)= \sum_{Q\textup{ is a clique}} \left( \sharp Q - 1\right).
\end{equation}
Effectively, the index of $G$ is the least number of equalities,
needed to define $X(G)$, in other words, the number of coincidences.
In particular, for the graphs, corresponding to the simplest case of
the Beck Gain, the index is one.

With these definitions at hand, it is not hard to obtain the
Inclusion-Exclusion formula, relating admissible graphs and the `not
strongly distinct' part of the Riesz product:

\begin{equation}\label{e.In-Ex}
\Psi^{\neg} = \sum_{G\textup{ admissible}} (-1)^{ind(G)+1}\,
\widetilde{\rho}\,\,^{|V(G)|} \operatorname{SumProd}(X(G))\cdot
\prod_{t\notin V(G)} (1+\widetilde{\rho} F_t).
\end{equation}

 We will prove the following Theorem:
\begin{theorem}\label{l.admissible<}{\bf Beck Gain for Graphs} For an admissible graph $ G$ on vertices $ V$ we have
the estimate below for positive, finite constants $ C_0, C_1, C_2,
C_3$:
\begin{equation}\label{e.equiv}
\rho ^{\abs{ V}} \norm \operatorname {SumProd} (X (G)).p.
 \le  [C_0\abs{ V} ^{C_1} p ^{C_2} q ^{C_3}   n ^{-\eta }] ^{ \abs{ V}}\,,
\qquad 2<p< \infty  \,.
\end{equation}
\end{theorem}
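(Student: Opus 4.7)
I would prove Theorem~\ref{l.admissible<} by induction on $|V(G)|$, with Lemma~\ref{l.Bf} (the Beck Gain with fixed parameters) serving as the engine of the induction. The estimate~\eqref{e.equiv} asserts that each vertex of $G$ buys a gain of $n^{-\eta}$ beyond the $L^2$-normalization encoded in $\rho^{|V|}$; the task is to show that every coincidence packaged in a clique can be ``cashed in'' for such a gain.

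\textbf{Reduction and induction step.} When $G$ decomposes into connected components $G_1,\dotsc,G_k$, the sum-product factors as $\prod_i \operatorname{SumProd}(X(G_i))$, since the disjoint vertex sets $V(G_i)\subset\{1,\dotsc,q\}$ force the first coordinates of the vectors $\vec r_v$ for $v\in V(G_i)$ to lie in disjoint unions of intervals $I_v$. H\"older's inequality with equal exponents $pk$ separates the factors, reducing matters to connected admissible graphs. For a connected admissible $G$, select a clique $Q$ of smallest cardinality and a pair of vertices $v,v'\in Q$. Condition on the remaining vectors $\{\vec r_u\mid u\in V(G)\setminus\{v,v'\}\}$. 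Each clique (of any color) containing $v$ or $v'$ together with a third vertex $u\in V(G)\setminus\{v,v'\}$ forces one coordinate of $\vec r_v$ or $\vec r_{v'}$ to equal the corresponding coordinate of $\vec r_u$. The admissibility hypothesis that cliques of distinct colors intersect in at most one vertex guarantees that these constraints pin down distinct coordinates and are consistent. The conditional inner sum over $(\vec r_v,\vec r_{v'})$ therefore falls under the hypotheses of Lemma~\ref{l.Bf} with some $F_1,F_2\ge 0$, yielding
\begin{equation*}
\rho^2\cdot\mathcal B(F_1,F_2)\lesssim q\, p^{d-1-(F_1+F_2)/2-1/4}\, n^{-(F_1+F_2)/2-1/4}.
\end{equation*}
The $F_1+F_2$ coordinates fixed in the pair are exactly those still free at the remaining vertices, so when the induction hypothesis is applied to the smaller admissible graph $G'$ on $V(G)\setminus\{v,v'\}$ (obtained by deleting $v,v'$ and collapsing the incident cliques), the accounting balances. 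A second application of H\"older's inequality in $L^p$ gives
\begin{equation*}
\NOrm \operatorname{SumProd}(X(G)) .p.\cdot \rho^{|V|}\le |V|^{C_1} p^{C_2} q^{C_3} n^{-\eta_0}\cdot \NOrm \operatorname{SumProd}(X(G')) .p'. \cdot \rho^{|V|-2}
\end{equation*}
for a slightly larger exponent $p'$ and an absolute $\eta_0>0$. Iterating roughly $|V|/2$ times closes the induction.

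\textbf{Main obstacle.} The principal difficulty is the combinatorial bookkeeping of the peeling procedure. After removing a pair $\{v,v'\}$ and the cliques containing them, one must verify that the remaining graph $G'$ is still admissible, so the induction hypothesis applies; in particular, every remaining vertex must still lie in some clique and the one-vertex intersection condition must persist. Odd-sized cliques leave an unpaired vertex that cannot be cashed in directly; these must be disposed of by merging with a neighboring clique of a different color or by a separate $L^2$ argument. Finally, one needs the per-pair gain from Lemma~\ref{l.Bf} to be \emph{uniform} in the fixed-parameter count $F_1+F_2$, and this is precisely what the quarter-power improvement in the exponents of Lemma~\ref{l.Bf} (the extra $-1/4$ beyond the $(F_1+F_2)/2$ from parameter-counting) supplies, allowing the $n^{-\eta|V|}$ accumulation claimed in~\eqref{e.equiv}. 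Extracting a sharper $\eta$ than $O(d^{-2})$ would require a more delicate peeling strategy than the pair-by-pair one outlined here.
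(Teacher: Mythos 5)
You have identified the right engine (Lemma~\ref{l.Bf}) and the right heuristic -- each coincidence that gets assigned a fixed value costs a factor of $n$, which is repaid by the $n^{-1/2}$ weight that $\rho$ attaches to each of the newly fixed parameters; this is precisely the paper's Remark~\ref{r.}. But the peeling induction, as you describe it, does not close. First, the ``conditioning'' step is not a legitimate factorization: the inner sum over $(\vec r_v,\vec r_{v'})$ depends on the outer vectors through the values on the cliques joining the pair to the rest of the graph, so it multiplies the outer sum as a varying function, not a constant. To decouple it you must sum over the at most $n$ possible values of each such clique (triangle inequality), paying $n$ per deleted clique, and this cost can only be recovered if the inductive hypothesis is strengthened to graphs whose vertices may carry \emph{fixed parameters}, with the bound improved by $n^{-f/2}$ when $f$ parameters are fixed (this is exactly the paper's estimate \eqref{e.g2f}). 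The theorem as you are inducting on it carries no fixed parameters, so applying it to $G'$ loses the balance you assert; your sentence that the coordinates fixed in the pair are ``still free at the remaining vertices'' has it backwards -- those coordinates become fixed at the outside vertices too, and that is where the second $n^{-1/2}$ per deleted clique must come from. Second, the obstacles you list (admissibility of $G'$, vertices stranded by odd or large cliques, and the growth of the Hölder exponent under ``slightly larger $p'$'' iterated $|V|/2$ times) are genuine and are left unresolved; in particular, if each step inflates the exponent by a constant factor, the accumulated $p$-powers are exponential in $|V|^2$, which ruins the required form $[C_0|V|^{C_1}p^{C_2}q^{C_3}n^{-\eta}]^{|V|}$ since $|V|$ may be as large as $q=n^{\varepsilon}$.

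The paper avoids induction on $|V|$ altogether. After reducing to connected graphs (Lemma~\ref{l.holder}), it deletes (fixes the values of) all cliques of size at least three -- these give a strict surplus, by \eqref{e.k} -- and in the remaining graph, whose cliques have size two, it selects a \emph{maximal independent set of edges}, at least a $1/(2d-3)$ proportion of all edges and hence covering at least $|V|/(2d)$ vertices by \eqref{e.n}; all other edges are deleted. After this one round of fixing, Proposition~\ref{p.products} factors $\operatorname{SumProd}$ into disjoint two-vertex graphs in $\mathcal G_{\textup{fixed}}(2)$ plus singletons with fixed parameters, each pair handled by Lemma~\ref{l.Bf}, and a single application of Hölder with exponents $\kappa p$, $\kappa<q$, keeps the per-vertex factor polynomial in $p$ and $q$, yielding the gain $n^{-1/(16d)}$ per vertex (Lemma~\ref{l.g2} and then Lemma~\ref{l.twoCliques}). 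If you recast your argument with the fixed-parameter bookkeeping made explicit and the decoupling done by one global choice of a matching rather than pair-by-pair, it becomes the paper's proof.
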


The most significant term on the right is $ n ^{-\eta }$.  It shows
that as the number of coincidences goes up, the corresponding `Beck
Gain' improves. Notice that for the other terms on the right, $ C_0$
is a constant; $ \lvert  V\rvert\le q \le n ^{\epsilon } $, where we
can choose $ 0<\epsilon $ as a function of $ \eta $; and while the
inequality above holds for all $ 2\le p<\infty $, we will only need
to apply it for $ p \lesssim  q ^{2b}\le n ^{\epsilon /2}$. That is,
the $ n ^{-\eta }$ is the dominant term on the right. This Theorem,
together with the fact that there are at most $|V|^{2d|V|}$
admissible graphs on the vertex set $V$,  yields the boundedness of
the sum in \eqref{e.main}.

\subsection*{Norm Estimates for Admissible Graphs}

We begin the proof of Theorem \ref{l.admissible<} with a further
reduction to connected admissible graphs. Let us write $ G\in
\operatorname {BG} (C_0, C_1,C_2, C_3 ,\eta )$ if the estimates
\eqref{e.equiv} holds. (`$ \operatorname {BG}$' for `Beck Gain.') We
need to see that all admissible graphs are in $ \operatorname {BG}
(C_0, C_1,C_2, C_3 ,\eta)$ for non-negative, finite choices of the
relevant constants.

\begin{lemma}\label{l.holder} Let $ C_0, C_1,C_2, C_3, \eta$
be non-negative constants. Suppose that $ G$ is an admissible graph,
and that it can be written as a union  of subgraphs $ G_1 ,\dotsc,
G_k$ on disjoint vertex sets, where all $ G_j \in \operatorname {BG}
(C_0, C_1,C_2, C_3, \eta)$. Then,
\begin{equation*}
G \in \operatorname {BG} (C_0, C_1, C_2, C_2+C_3, \eta )\,.
\end{equation*}
\end{lemma}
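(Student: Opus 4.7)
The plan is to exploit the fact that when $G$ decomposes into subgraphs on pairwise disjoint vertex sets, the random variable $\operatorname{SumProd}(X(G))$ factors as a product, and then apply H\"older's inequality to move from a product of $L^{pk}$ norms to a single $L^p$ norm.

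First, I would observe the key structural fact: since $G = G_1 \cup \cdots \cup G_k$ with $V(G_1), \dotsc, V(G_k)$ pairwise disjoint, no edge of any color of $G$ joins a vertex of $V(G_i)$ to a vertex of $V(G_j)$ for $i \neq j$. By the definition of $X(G)$, this means the constraints $r_{v,j} = r_{v',j}$ imposed along edges only involve vertices inside a single $V(G_i)$. Consequently the collection $X(G)$ is exactly the Cartesian product $X(G_1) \times \cdots \times X(G_k)$, and a direct expansion gives
\begin{equation*}
\operatorname{SumProd}(X(G)) \;=\; \prod_{j=1}^{k} \operatorname{SumProd}(X(G_j))\,.
\end{equation*}

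Next, I would apply H\"older's inequality in the form
\begin{equation*}
\NOrm \prod_{j=1}^{k} \operatorname{SumProd}(X(G_j)) .p. \;\le\; \prod_{j=1}^{k} \Norm \operatorname{SumProd}(X(G_j)) .pk.\,,
\end{equation*}
which is valid since $\sum_{j=1}^k \tfrac{1}{pk} = \tfrac{1}{p}$. The hypothesis $G_j \in \operatorname{BG}(C_0, C_1, C_2, C_3, \eta)$ applied at the exponent $pk$ gives
\begin{equation*}
\rho^{|V(G_j)|} \Norm \operatorname{SumProd}(X(G_j)) .pk.
\;\le\; \bigl[C_0\, |V(G_j)|^{C_1}\, (pk)^{C_2}\, q^{C_3}\, n^{-\eta}\bigr]^{|V(G_j)|}\,.
\end{equation*}
Multiplying these inequalities over $j = 1, \dotsc, k$ and using $\sum_j |V(G_j)| = |V(G)|$, the left sides combine into $\rho^{|V(G)|} \|\operatorname{SumProd}(X(G))\|_p$, while on the right the bases are bounded, term by term, using $|V(G_j)| \le |V(G)|$ and the crucial observation that $k \le |V(G)| \le q$, so that $(pk)^{C_2} \le p^{C_2} q^{C_2}$.

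Assembling these estimates yields
\begin{equation*}
\rho^{|V(G)|} \Norm \operatorname{SumProd}(X(G)) .p.
\;\le\; \bigl[C_0\, |V(G)|^{C_1}\, p^{C_2}\, q^{C_2+C_3}\, n^{-\eta}\bigr]^{|V(G)|}\,,
\end{equation*}
which is exactly the assertion that $G \in \operatorname{BG}(C_0, C_1, C_2, C_2+C_3, \eta)$. There is no real obstacle here: the only step requiring any care is checking that the factorization $\operatorname{SumProd}(X(G)) = \prod_j \operatorname{SumProd}(X(G_j))$ genuinely holds, i.e.\ that the admissibility constraints of $G$ decouple across components, which is immediate once one notes that edges (of any color) are internal to the $V(G_j)$'s. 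Everything else is bookkeeping with H\"older and the trivial bound $k \le q$.
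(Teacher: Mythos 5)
Your proof is correct and follows essentially the same route as the paper: factor $\operatorname{SumProd}(X(G))$ over the disjoint components, apply H\"older's inequality with exponents $kp$, invoke the hypothesis at exponent $kp$, and absorb the factor $k^{C_2}\le q^{C_2}$ into the power of $q$. If anything, your bookkeeping of the constants is more faithful to the stated definition of $\operatorname{BG}(C_0,C_1,C_2,C_3,\eta)$ than the paper's own displayed computation, which momentarily swaps the roles of $C_1$ and $C_2$.
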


With this Lemma, we will identify a small class of graphs for which
we can verify the property \eqref{e.equiv} directly, and then appeal
to this Lemma to deduce Lemma~\ref{l.admissible<}.   Accordingly, we
modify our notation. If $ \mathcal G$ is a class of graphs, we write
$ \mathcal G\subset \operatorname {BG} (\eta )$ if there are
constants $ C_0, C_1, C_2, C_3 $ such that $ \mathcal G\subset
\operatorname {BG} (C_0, C_1, C_2, C_3, \eta )$.

\begin{proof}
We then have  by Proposition~\ref{p.products}
\begin{equation*}
\operatorname {SumProd} (X (G)) = \prod _{j=1} ^{k} \operatorname
{SumProd} (X (G_j)) \,.
\end{equation*}
Using H\"older's inequality, we can estimate
\begin{align*}
\rho^{|V|}\norm \operatorname {SumProd} (X (G)) .p. &\le \prod
_{j=1} ^{k} \rho^{|V_j|}\norm \operatorname {SumProd} (X (G_j)). k
p.
\\
& \le \prod _{j=1} ^{k}  [C_0 (kp) ^{C_1} q ^{C_2} n ^{-\eta } ]
^{\abs{ V_j}}
\\
& \le [C_0 p ^{C_1} q ^{C_2+C_1} n ^{-\eta } ] ^{\abs{ V}}\,.
\end{align*}
Here, we use the fact that since the graphs are non-empty, we
necessarily have $ k\le q$.

\end{proof}

\begin{proposition}\label{p.products}
Let $ G_1 ,\dotsc, G_p$ be admissible graphs on pairwise disjoint
vertex sets $ V_1 ,\dotsc, V_p $.  Extend these graphs in the
natural way to a graph $ G$ on the vertex set $ V=\bigcup V_t$.
Then, we have
\begin{equation*}
\operatorname {SumProd} (X(G)) = \prod _{t=1} ^{p} \operatorname
{SumProd} (X (G_t))\,.
\end{equation*}
\end{proposition}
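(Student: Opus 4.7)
The plan is to prove this by directly unpacking the definition of $X(G)$ and observing that, because the vertex sets are disjoint, the edge constraints defining $X(G)$ factor cleanly as a Cartesian product over the pieces. This is essentially a bookkeeping lemma.

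First I would clarify what the ``natural extension'' means: the vertex set of $G$ is $V = \bigsqcup_t V_t$, and for each color $j$, the edge set is $E_j(G) = \bigsqcup_t E_j(G_t)$. In particular, no edge of $G$ connects vertices in different $V_t$'s. This means the coincidence constraints defining $X(G)$ — namely, $r_{v,j} = r_{v',j}$ whenever $(v,v') \in E_j(G)$ — only ever relate vectors $\vec r_v, \vec r_{v'}$ with $v, v'$ lying in the same $V_t$. Consequently, a tuple $(\vec r_v)_{v \in V} \in \prod_{v \in V} \mathbb{A}_v$ lies in $X(G)$ if and only if its restriction $(\vec r_v)_{v \in V_t}$ lies in $X(G_t)$ for every $t$. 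That is, as a set,
\begin{equation*}
X(G) = \prod_{t=1}^p X(G_t).
\end{equation*}

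Next I would use this factorization at the level of sums and products. Since the $V_t$ are disjoint, the product over $v \in V$ splits as $\prod_{v \in V} f_{\vec r_v} = \prod_{t=1}^p \prod_{v \in V_t} f_{\vec r_v}$. Combining with the set-level factorization and using the distributive law,
\begin{align*}
\operatorname{SumProd}(X(G))
&= \sum_{(\vec r_v)_{v\in V} \in X(G)} \prod_{v\in V} f_{\vec r_v} \\
&= \sum_{(\vec r_v)_{v\in V_1} \in X(G_1)} \cdots \sum_{(\vec r_v)_{v\in V_p} \in X(G_p)} \prod_{t=1}^p \prod_{v\in V_t} f_{\vec r_v} \\
&= \prod_{t=1}^p \Biggl( \sum_{(\vec r_v)_{v\in V_t} \in X(G_t)} \prod_{v\in V_t} f_{\vec r_v} \Biggr) = \prod_{t=1}^p \operatorname{SumProd}(X(G_t)),
\end{align*}
which is the claim.

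There is no real obstacle here; the only point that deserves care is the very first step, verifying that the ``natural extension'' introduces no edges between distinct $V_t$'s, so that the constraint system defining $X(G)$ decouples across the $V_t$. Once that is checked, the rest is just the distributive law and the disjointness of the index sets.
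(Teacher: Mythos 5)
Your proof is correct: since the natural extension introduces no edges between distinct $V_t$'s, the constraints defining $X(G)$ decouple, giving $X(G)=\prod_t X(G_t)$, and the identity follows by the distributive law. The paper states this proposition without proof, treating it as immediate, and your argument is precisely the intended verification.
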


\subsection*{Connected Graphs Have the Beck Gain.}

We single out for special consideration the connected  admissible
graphs $ G$ .   Let $ \mathcal G _{\textup{connected}} $ be the
collection of of all admissible  connected
 graphs on $ V\subset \{1,\dotsc, q\}$.

\begin{lemma}\label{l.twoCliques}
We have $ \mathcal G _{\textup{connected}}\subset \operatorname {BG}
( {\eta} )$ for some $\eta>0$.
\end{lemma}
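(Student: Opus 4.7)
The plan is to prove the estimate by strong induction on $v = |V|$, using the simplest Beck Gain (Lemma \ref{l.SimpleCoincie}) as the base case and the fixed-parameter Beck Gain (Lemma \ref{l.Bf}) as the inductive engine. The underlying intuition is that each vertex in a connected admissible graph participates in at least one coincidence, and each such coincidence should contribute a factor of roughly $n^{-\eta}$ to the $L^p$ estimate.

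For the base case $v = 2$, the connected admissible graph consists of two vertices joined by at least one edge of some color $j \in \{2,\ldots,d\}$. After a harmless permutation of coordinates, this reduces to the setting of Lemma \ref{l.SimpleCoincie}, giving $\norm \operatorname{SumProd}(X(G)) . p . \lesssim p^{d-1/2} n^{d-3/2}$. Multiplying by $\rho^2 \simeq q\, n^{-(d-1)}$ yields $q\, p^{d-1/2}\, n^{-1/2}$, which has the required form with $\eta = 1/4$, $C_2 \simeq d/2$, and $C_3 \simeq 1/2$.

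For the inductive step $v \geq 3$, I would pick a non-cut vertex $v_0$ (which exists in every connected graph on at least two vertices, e.g.\ a leaf of a spanning tree) together with a neighbor $u$ connected by an edge of color $j_0$. The strategy is to condition on the vectors $\{\vec r_w : w \notin \{v_0,u\}\}$, which fixes certain coordinates of $\vec r_u$ and $\vec r_{v_0}$ via their external edges in $G$. The inner sum over the pair $(\vec r_u, \vec r_{v_0})$ then has one free coincidence (in coordinate $j_0$) and some fixed coordinates, so Lemma \ref{l.Bf} applies uniformly. Hölder's inequality decouples the inner from the outer sum, giving a schematic bound
\begin{equation*}
\norm \operatorname{SumProd}(X(G)) . p . \lesssim \norm \operatorname{SumProd}(X(G')) . p_1 . \cdot \sup \norm \text{inner sum} . p_2 . ,
\end{equation*}
where $G'$ is $G$ with the vertex pair $\{v_0,u\}$ removed. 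One then decomposes $G'$ into connected admissible components and applies Lemma \ref{l.holder} together with the induction hypothesis.

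The main obstacle will be the combinatorial bookkeeping: (i) ensuring that a non-cut vertex $v_0$ can be chosen so that $G \setminus \{v_0,u\}$ decomposes cleanly into admissible connected pieces while preserving the clique structure in each color; (ii) controlling the number of fixed parameters $F_1, F_2$ appearing in Lemma \ref{l.Bf} by the degrees of $u$ and $v_0$, and tracking how these counts interact with the $2(v-1)$ total coincidences available across the induction; and (iii) verifying that the polynomial factors $|V|^{C_1} p^{C_2} q^{C_3}$ compound correctly under recursion, which requires choosing $\eta > 0$ sufficiently small at the outset so that the multiplicative constants accumulated at each inductive step are absorbed. Since each inductive step extracts at least one Beck Gain factor of $n^{-1/4}$ distributed over two vertices via Lemma \ref{l.Bf}, a value such as $\eta \simeq 1/(Cd)$ should emerge and suffice for the final estimate.
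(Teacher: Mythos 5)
Your base case and your choice of Lemma~\ref{l.Bf} as the engine are in the right spirit, but the inductive step has two genuine gaps. The first is the decoupling. The inner sum over the pair $(\vec r_u,\vec r_{v_0})$ depends on the outer variables $\{\vec r_w\}$: its fixed coordinates are the values $r_{w,j}$ transported along the external edges. So $\operatorname{SumProd}(X(G))$ is a sum, over the outer variables, of products of an outer factor with a \emph{varying} inner factor, and there is no valid inequality of the schematic form $\bigl\lVert \sum_j g_j h_j \bigr\rVert_p \lesssim \bigl\lVert \sum_j g_j \bigr\rVert_{p_1}\,\sup_j \lVert h_j\rVert_{p_2}$. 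The paper's mechanism at exactly this point is not conditioning but \emph{deletion}: one sums, by the triangle inequality, over the at most $n$ possible values of each coincidence joining $\{u,v_0\}$ to the rest, paying a factor $n$ per deleted edge which is exactly repaid by the $\rho$-normalization of the two parameters thereby frozen; only after this deletion does the collection factor over disjoint vertex sets, so that Proposition~\ref{p.products}, H\"older, and Lemma~\ref{l.holder} legitimately apply. If you repair your induction this way, you must also strengthen the induction hypothesis to connected graphs carrying fixed parameters, as in \eqref{e.g2f}, since the components of $G'$ inherit fixed coordinates.

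The second gap is the claim that the inner pair keeps ``one free coincidence in coordinate $j_0$.'' This fails whenever the edge $(u,v_0)$ sits inside a clique of color $j_0$ of size at least three: the common value in that coordinate is then shared with vertices of $G'$, hence determined by the outer data, and Lemma~\ref{l.Bf} yields no gain for this pair at all. The gain in that situation has to be extracted from the clique itself: fixing its common value costs $n$ but freezes $k\ge 3$ parameters, a net factor $n^{1-k/2}$ (Remark~\ref{r.}). This is precisely why the paper first deletes all cliques of size at least three and only then treats the residual graph, in which every clique has size two (Lemma~\ref{l.g2}), by selecting a maximal set of \emph{independent} edges --- at least a $1/(2d-3)$ fraction of the edges, hence pairs covering at least $|\widetilde V|/(4d)$ vertices by connectedness --- deleting the remaining edges, and applying Lemma~\ref{l.Bf} to each matched pair, which produces the exponent $\eta = 1/(16d)$. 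Without separating these two cases, your induction cannot guarantee a Beck-gain factor at every step, so as written the argument does not close.
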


The point of this proof is that we will reduce this question to
a much simpler key fact, namely  Lemma~\ref{l.Bf}, which we restate here in our current
notation.\footnote{The only points that recommend the proof we describe here
is that it is easy to state and delivers a gain.
Clearly, a more sustained analysis, yielding a larger gain
would result in an improved result on the Small Ball Conjecture.}

Let $ \mathcal G _{\textup{fixed}} (2)$ be the set of graphs---and sets of
$ \mathsf r$ functions associated with the graphs---with these properties:
\begin{itemize}
\item $ G$ is a connected graph on two vertices $ \{v,v'\}$. That is, there is
at least one edge that connects these to vertices. Denote by
$C\subset \{2,\dots,d\}$ the set of coordinates corresponding to the
edges.
\item There are a set of coordinates $ F_v, F_{v'}\subset \{2 ,\dotsc, d\}$
that are disjoint from the set of edges, and two vectors $ \vec a\in \mathbb N ^{F_v}$
and $ \vec a ' \in \mathbb N ^{F_{v'}}$, so that we define
\begin{equation*}
Y (G):= \{ (\vec r_v,\vec r _{v'})\in \mathbb H _n\mid
r _{v,j}=r _{v',j}\ \forall j\in C\,; \
r _{v,k}=a _{k}\ \forall k\in F_{v}\,; \
r _{v',k}=a _{k}\ \forall k\in F_{v'}
\}
\end{equation*}
\end{itemize}
These are in essence the assumptions of Lemma~\ref{l.Bf}. This Lemma
proves that
\begin{equation*}
\norm \operatorname {SumProd} (Y (G)). p.
\lesssim p ^{d}
n ^{\sigma }\,, \qquad \sigma =d- 1 - \frac{F_v+F_{v'}}{2}-\frac{1}{4}\,.
\end{equation*}

By abuse of notation, let us summarize this inequality by the inclusion
$ \mathcal G _{\textup{fixed}} (2)\subset \operatorname {BG} (C_0,C_1, d/2,0, 1/8 )$.
Or, even more briefly, as
$ \mathcal G _{\textup{fixed}} (2)\subset \operatorname {BG} (1/8 )$.
That is, there is a gain of $ \tfrac18$ for each vertex.
It follows from the proof of Lemma~\ref{l.holder}, that if $ G$ is any graph
whose connected components are each elements of $ \mathcal G _{\textup{fixed}} (2)$,
then $ G\in \operatorname {BG} (1/8)$.

Our line of attack on this Lemma is to take a general connected
graph $ G$, use the triangle inequality to assign fixed values to a
number of edges, making the connected components of the new graph to
be elements of $ \mathcal G _{\textup{fixed}} (2) $. The proportion
of vertices that will be in one of these graphs will be at least $
1/2d$ of all vertices. And therefore connected graphs will be in $
\operatorname {BG} (1/16d)$.

\begin{remark}\label{r.}  A heuristic guides this argument.  The normalization
$ \rho ^{\lvert  V\rvert }$ in \eqref{e.equiv} assigns a weight $ n
^{-1/2}$ to each free parameter of $ X (G)$, ignoring losses of
parameters from the edges of $ G$.   If $ (v,v')$ is an edge in the
graph, and we assign the edge one of $ n$ possible values, the full
power of $ n$ is exactly compensated by the collective weight of the
two parameters in the edge. Therefore, we are free to fix a fixed
proportion of edges in the graph, obtaining a Beck Gain on the
remaining proportion.   In this argument, if the edge is in a clique
of size at least $ k\ge 3$, specifying a single value on this clique
actually leads to a positive gain of $ n ^{-k/2+1}$. In other words,
graphs, all of whose cliques are of size two, are extremal with
respect to this analysis (see Lemma \ref{l.g2}). This heuristic is
made precise in the proof below.

\end{remark}

By 'deleting a clique' we shall mean fixing a value of the
coincidence which corresponds to that clique. Let $G\in \mathcal G
_{\textup{connected}}$. Following the heuristic above, in the first
step of the algorithm we delete all cliques of size at least 3 in
$G$.

After this step $G$ breaks down into connected components, which are
admissible graphs with cliques only of size 2 (and, possibly, some
singletons). Next, we want to obtain an estimate for such graphs.

\begin{lemma}\label{l.g2}
Suppose $\widetilde{G}\in \mathcal G _{\textup{connected}}$ has
cliques of size at most 2. Then $\widetilde{G}\in
BG(\frac{1}{16d})$.
\end{lemma}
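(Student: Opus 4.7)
The plan is to reduce the general admissible connected $\widetilde{G}$ (with cliques of size at most $2$) to a disjoint union of connected components each lying in $\mathcal{G}_{\textup{fixed}}(2)$ --- plus isolated vertices --- so that Lemma~\ref{l.Bf} applies to each two-vertex piece. Concretely, I would select a subgraph $M \subseteq E(\widetilde{G})$ of pairwise vertex-disjoint edges, apply the triangle inequality to each edge in $E_{0} := E(\widetilde{G}) \setminus M$ (summing over the $n$ possible values of the associated shared coordinate, introducing a factor $n$ per edge), and then decompose the resulting product across the connected components of the reduced graph via Proposition~\ref{p.products} and H\"older's inequality.

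After the fixing, each matched pair $\{v,v'\} \in M$ becomes an element of $\mathcal{G}_{\textup{fixed}}(2)$, with $F_v$ and $F_{v'}$ fixed coordinates coming from its incidences to $E_{0}$; Lemma~\ref{l.Bf} then bounds its $L^p$ norm by $p^{d-1-(F_v+F_{v'})/2-1/4} n^{d-1-(F_v+F_{v'})/2-1/4}$. Each vertex not covered by $M$ becomes isolated with $F_v$ fixed coordinates, and is controlled via Littlewood--Paley by $(pn)^{(d-1-F_v)/2}$. Using the identity $\sum_{v \in V} F_v = 2|E_{0}|$, the bookkeeping of exponents in $\rho^{|V|}\norm{\operatorname{SumProd}(X(\widetilde{G}))}.p.$ will telescope cleanly: the $n$-exponent collapses to exactly $-|M|/4$, while the $p$-exponents and $|V|$-dependent losses from H\"older are polynomial and to be absorbed into the constants $C_0, C_1, C_2, C_3$ defining the $\operatorname{BG}$-class.

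The hard part will be the purely combinatorial claim that in every such $\widetilde{G}$ one can find a matching $M$ with $|M| \gtrsim |V|/d$, covering at least a $1/(2d)$ fraction of the vertices. The admissibility hypothesis, combined with the size-$2$ clique condition --- the decomposition of each color class into (vertex-disjoint) $2$-cliques together with the intersection constraint across distinct colors --- should produce such an $M$ via a pigeonhole over the $d-1$ colors, since connectedness forces at least $|V|-1$ edges and some single color class must therefore be a matching of size $\gtrsim |V|/d$. I expect the delicacy to lie in the interaction between colors and in ensuring that mono-chromatic ``star-like'' portions of $\widetilde{G}$ are handled correctly, possibly by a separate direct argument that in fact yields stronger gain than the matching bound supplies. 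Granted the combinatorial input, the exponent bookkeeping yields $\widetilde{G} \in \operatorname{BG}(\tfrac{1}{16d})$; the drop from the naive $\tfrac{1}{8d}$ (which would follow from $|M| \ge |V|/(2d)$ alone) is meant to absorb unavoidable losses in the H\"older combination and the $p$-growth of the Beck Gain factors.
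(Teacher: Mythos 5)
Your proposal is correct and follows essentially the same route as the paper: fix all edges outside a matching, apply Lemma~\ref{l.Bf} to the matched pairs and Littlewood--Paley/parameter counting to the leftover singletons, combine via Proposition~\ref{p.products} and H\"older, and let the exponent bookkeeping (using that the fixed coordinates number twice the deleted edges) collapse to $n^{-|M|/4}$ with $|M|\gtrsim |\widetilde V|/d$. The only, harmless, difference is the source of the matching: you pigeonhole over the $d-1$ colors --- legitimate, since with cliques of size at most $2$ every color class is itself a matching, so your worry about monochromatic stars is moot --- whereas the paper takes a maximal set of independent edges of arbitrary colors and uses the degree bound $d-1$ to get $|M|\ge |\widetilde E|/(2d-3)\ge |\widetilde V|/(4d)$; both yield the claimed $\operatorname{BG}(1/16d)$.
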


To prove this statement we shall use the following property of
$\widetilde{G}$:
\begin{itemize}
\item The degree of each vertex in $\widetilde{G}$ is at most $d-1$
(since the degree in each color is at most one).
\end{itemize}

Let $\widetilde V$ be the set of vertices of $\widetilde G$, and
$\widetilde E$ be the set of all its edges. The point is to select a
maximal  subset $ \widetilde E _{\textup{indpndt}}$ of
\emph{independent} edges. That is, no two edges in $ \widetilde E
_{\textup{indpndt}}$, regardless of color, have a common vertex. It
is an elementary fact that we can take
\begin{equation} \label{e.2d-4}
\lvert  \widetilde E _{\textup{indpndt}}\rvert \ge \tfrac 1 {2d-3}
\lvert  \widetilde E\rvert\,.
\end{equation}
Indeed, each edge in $ \widetilde G$ shares a vertex with at most $
2d-4$ distinct edges, which observation directly implies the
inequality above.

We delete all other edges of $\widetilde G$ (i.e. we fix some choice
of parameters for the corresponding coincidences) and thus
$\widetilde G$ breaks down into a number of components each of which
is either a singleton or a graph with two vertices and one edge. The
latter components correspond exactly to the situation in which the
Beck gain of the previous section is applicable. Let us denote these
pairs by $G_k \in \mathcal G _{\textup{fixed}} (2) $,
$k=1,\dots,N=\lvert  \widetilde E _{\textup{indpndt}}\rvert$; the
singletons -- by $v_j$, $j=1,\dots, |\widetilde V | - 2N$. Let also
$E'=\widetilde E - \widetilde E_{\textup{indpndnt}}$ denote the set
of all deleted edges in $\widetilde G$. Denote also by $F_k$ the
number of fixed parameters in $X(G_k)$ and $F'_j$ will be the number
of fixed parameters in $\vec r_{v_j}$. We have the following
relations:
\begin{equation}\label{e.e}
2|E'| = 2|\widetilde E - \widetilde
E_{\textup{indpndnt}}|=\sum_{k=1}^N F_k +\sum_{j=1}^{|\widetilde V |
- 2N} F'_j,
\end{equation}

and, since $ \widetilde G$ is connected, it has at least $ \lvert  V
(G)\rvert -1$ edges, thus

\begin{equation}\label{e.n}
N \ge \frac{|\widetilde E|}{2d-3} \ge \frac{|\widetilde V|-1}{2d-3}
\ge \frac{|\widetilde V|}{2(2d-3)} \ge \frac{|\widetilde V|}{4d}.
\end{equation}
Besides, by Proposition \ref{p.products}, we obtain the following
equality (the sum below is taken over all choices of parameters on
the `deleted' edges):
\begin{equation}
\operatorname {SumProd} (X (\widetilde G)) = \sum \prod _{k=1} ^{N}
\operatorname {SumProd} (X ( G_k)) \cdot \prod _{j=1} ^{|\widetilde
V|-2N} \operatorname {SumProd} (X ( v_j)) \,.
\end{equation}

Now we apply the triangle inequality, H\"{o}lder's inequality, the
relations \eqref{e.e} and \eqref{e.n}, and the Beck gain in the form
of Lemma \ref{l.Bf} to estimate ($ \kappa = |\widetilde V|-N < q$):

\begin{align*}
\rho^{|\widetilde V|}\norm \operatorname {SumProd} (X (\widetilde
G)) .p. &\le n^{|E'|}\cdot \prod _{k=1} ^{N} \rho^{2}\norm
\operatorname {SumProd} (X (G_k)). \kappa p. \cdot \prod _{j=1}
^{|\widetilde V| -2N} \rho \norm  f_{\vec r_{v_j}}. \kappa p.
\\
& \lesssim n^{|E'|}\cdot \prod _{k=1} ^{N} \left[ \rho^2 (\kappa p\,
n) ^{d-1 -\frac{F_k}{2}-\frac14} \right] \cdot \prod _{j=1}
^{|\widetilde V|-2N} \left[ \rho (\kappa p \, n) ^{\frac{d-1}{2}
-\frac{F'_j}{2}}   \right]
\\
& \lesssim \left[C p ^{\frac{d-1}{2}} q ^{\frac{d}{2}}\right]
^{\abs{ \widetilde V}}\cdot n^{-\frac{N}{4}} \lesssim \left[ p
^{\frac{d-1}{2}} q ^{\frac{d}{2}}\, n^{-\frac{1}{16d}} \right]
^{\abs{ \widetilde V}}.
\end{align*}

This proves Lemma \ref{l.g2}. The point of passing to the collection
of independent edges is that $\operatorname {SumProd} (X (\widetilde
G))$ splits into a product of terms associated with graphs in $
\mathcal G _{\textup{fixed}} (2)$. Each of these graphs leads to a
gain of at least $ \tfrac 18$ for each vertex.  But by \eqref{e.e},
there are at least $ \frac1{2d} \lvert  V (G)\rvert $ vertices  for
which we will get this gain. This shows that $G\in BG(1/16d)$.

 We can now proceed to prove Lemma
\ref{l.twoCliques} -- the proof will be in the same spirit. After we
delete "large" (of size at least 3) cliques of $G$, this graph
decomposed into some singletons and components as in Lemma
\ref{l.g2} (but with some parameters fixed). Denote these components
by $\widetilde G_k$, $k=1,\dots,n_1$ and the singletons by $u_j$,
$j=1,\dots,n_2$. Let $f_k$ be the number of fixed parameters in
$X(\widetilde G_k)$ and and $f'_j$ -- the number of fixed parameters
in $\vec r_{u_j}$. Notice that the proof of Lemma \ref{l.g2} can be
trivially adapted to the case when some parameters are fixed to
obtain the estimate:
\begin{equation}\label{e.g2f}
\rho ^{\abs{\widetilde  V_k}} \norm \operatorname {SumProd} (X
(\widetilde G_k)).p.
 \le  \left[ C p ^{\frac{d-1}{2}} q ^{\frac{d}{2}}   n ^{- \frac{1}{16d}
 }\right]  ^{ \abs{\widetilde V_k}}\, n^{-\frac{f_k}{2}}.
\end{equation}
Also, if we denote by $K$ the total number of fixed cliques, one can
see that, since all the cliques had size at least 3, we have the
inequality:
\begin{equation}\label{e.k}
3K \le \sum_{k=1}^{n_1} f_k + \sum_{j=1}^{n_2} f'_j.
\end{equation}
Let us write the set of vertices of $G$ as $V=V_1 \cup V_2$, where
$V_1$ are the vertices involved in at least one of the deleted
cliques and $V_2$ are all the other vertices. It is easy to see that
$V_2 \subset \cup_{k=1}^{n_1} V(\widetilde G_k)$. Indeed, all the
vertices that became singletons had to be a part of one of the
deleted cliques. Thus,
\begin{equation}
|V_2| \le \sum_{k=1}^{n_1} |V(\widetilde G_k)|.
\end{equation}
Besides, it is easy to see that
\begin{equation}
|V_1| \le \sum_{k=1}^{n_1} f_k + \sum_{j=1}^{n_2} f'_j,
\end{equation}
because at least one parameter is fixed in each vertex from a
deleted clique.  Using these relations, similarly to the proof of
Lemma \ref{l.g2}, taking $\kappa = n_1 +n_2<q$, we can write:
\begin{align*}
\rho^{| V|}\norm \operatorname {Prod} (X ( G)) .p. &\le n^{K}\cdot
\prod _{k=1} ^{n_1} \rho^{|V(\widetilde G_k)|}\norm \operatorname
{Prod} (X (\widetilde G_k)). \kappa p. \cdot \prod _{j=1} ^{n_2}
\rho \norm f_{\vec r_{u_j}}. \kappa p.
\\
& \lesssim n^{K}\cdot \prod _{k=1} ^{n_1} \left[C p ^{\frac{d-1}{2}}
q ^{{d}}   n ^{- \frac{1}{16d}
 }\right]  ^{ \abs{V(\widetilde G_k)}}\, n^{-\frac{f_k}{2}} \cdot \prod _{j=1}
^{n_2} \left[  p^{\frac{d-1}{2}} q^d n^{-\frac{f'_j}{2}} \right]
\\
& \lesssim \left[ C p ^{\frac{d-1}{2}} q ^{{d}}\right] ^{\abs{V}}
\cdot n^{K-\frac12 \left(\sum f_k + \sum f'_j\right) -
\frac{1}{16d}\sum \abs{V(\widetilde G_k)}}
\\
& \lesssim \left[ C p ^{\frac{d-1}{2}} q ^d\right] ^{\abs{  V}}\,
n^{-\frac16 \abs{V_1}-\frac{1}{16d}\abs{V_2}} \lesssim \left[ C p
^{\frac{d-1}{2}} q ^d n^{-\frac{1}{16d}}\right] ^{\abs{ V}}.
\end{align*}

\section{The Lower Bound on the Discrepancy Function} 

We give the proof of Theorem~\ref{t.discrep}, which is essentially a corollary
to the proof of our Main Theorem, Theorem~\ref{t.d=3}. As such, we will give
a somewhat abbreviated proof. Indeed, the analogy between the lower bound on
Discrepancy Functions and the Small Ball Inequality is well known to experts.

The proof is by duality.
Fix $ N$, and take $ 2N\le 2 ^{n}<4N$.  It is a familiar fact \cite {MR903025}
that for each $ \lvert  \vec r\rvert =n$ we can construct a $ \mathsf r$ function
$ f _{\vec r}$ such that
\begin{equation}\label{e.IP}
\ip D_N, f _{\vec r}, >c>0\,,
\end{equation}
where $ c$ depends only on dimension.  We use these functions in the construction of
the test function, following \S~\ref{s.test}, with this one change.  Before,
see \eqref{e.G_t}, we took $ I_1,\dotsc, I_q$ to be a partition of
 $ \{1,2,\dotsc,n\}$ into $ q$ disjoint intervals of equal length.
Instead, we take
\begin{equation}\label{e.Iq}
I _{t} := \{ j\in \mathbb N \mid  \lvert  j- tn/q\rvert < q/4  \}\,.
\end{equation}
This is the only change we make in the construction of $ \Psi ^{\textup{sd}}$.
It follows that $ \norm \Psi ^{\textup{sd}}.1. \lesssim 1$.

Recall that $ \Psi ^{\textup{sd}}=\sum _{k=1} ^{q} \Psi ^{\textup{sd}}_k$,
see \eqref{e.distinct}.   By construction, we have
\begin{align*}
\ip D_N, \Psi ^{\textup{sd}}_1, & = \sum _{t=1} ^{q}
\widetilde \rho \sum _{\vec r\in \mathbb A _t} \ip D_N, f _{\vec r},
\\& \gtrsim q ^{b} n ^{ (d-1)/2} \simeq n ^{\epsilon /4+ (d-1)/2}\,.
\end{align*}
This is a `gain over the average case estimate' as one can see by comparison
to Theorem~\ref{t.roth}.  It remains to see that the higher order terms
$ \Psi ^{\textup{sd}}_k$ contribute smaller terms than the one above.

By construction, $\Psi ^{\textup{sd}}_k$ is itself a sum of $
\mathsf r$ functions $ f _{\vec s}$ with $ \lvert  \vec s\rvert >n$.
Indeed, it follows from the separation in \eqref{e.Iq} that we
necessarily have
\begin{equation}\label{e.Ds}
n+k\tfrac n {2q}\le \lvert  \vec s\rvert \le nd\,.
\end{equation}
Second, it is a well known fact that $ \lvert  \ip D_N, f _{\vec
s},\rvert < N 2 ^{- \lvert  \vec s\rvert }$.  Third, we fix $ \vec
s$ as above, and set $ \operatorname {Count} (\vec s)$ to be the
number of distinct ways can we select $ \vec r_1 ,\dotsc, \vec r_k$,
all of length $ n$, so that the product $ f _{\vec r_1} \cdots f
_{\vec r_k}$ is an $ \mathsf  r$ function of parameter $ \vec s$.  A
very crude bound here is sufficient,
\begin{equation*}
\operatorname {Count} (\vec s)\le \lvert \vec s \rvert ^{(d-1)k}\,.
\end{equation*}
Thus, we can estimate
\begin{align*}
\ip D_N, \Psi ^{\textup{sd}}_k, & \le \sum _{ j \ge n+k\tfrac n
{2q}}\left( \sum_{\vec s : \,  \lvert \vec s\rvert=j }\operatorname
{Count} (\vec s) \lvert \ip D_N, f _{\vec s} ,\rvert \right)
\\& \lesssim n ^{d(k+3)} 2 ^{-kn/2q}\,.
\end{align*}
As $ q= n ^{\epsilon }$, this is clearly summable in $ k\ge 1$ to at most a constant.
This completes the proof.

\section{The Proof of the Smooth Small Ball Inequality} 

We prove Theorem~\ref{t.smooth}. There is no loss of generality in
assuming that $|\alpha(R)|\le 1$ for all $R$ of volume at least
$2^{-n}$, since both sides of \eqref{e.smoothEtaTalagrand} are
homogeneous and sums have finitely many terms. With $ \varphi $ as
in the theorem, set
\begin{equation*}
\varphi _{\vec r}= \sum _{R\mid \lvert  R_j\rvert=2 ^{-r_j} } \alpha (R) \varphi _{R}\,.
\end{equation*}
And let $ \Phi =\sum _{\lvert  \vec r\rvert=n } \varphi _{\vec r}$.
Define the $ \mathsf r$ functions as in \eqref{e.fr}.
It is the assumption that $ c _{\varphi }=\ip \varphi , h _{[-1/2,1/2]},\neq 0$, and in fact we
will assume that this inner product is positive.
Thus,
\begin{equation}\label{e.zIP}
\ip \varphi _{\vec r}, f _{\vec r},=c _{\varphi } 2 ^{-n}\sum _{R\mid \lvert  R_j\rvert= 2 ^{r_j} }
\lvert  \alpha (R)\rvert \,.
\end{equation}

 As $ \varphi \in C [-1/2,1/2]$, we have
\begin{equation} \label{e.IPphi}
\lvert  \ip \varphi , h_J, \rvert \le C _{\varphi } \lvert  J\rvert
\end{equation}
for all dyadic intervals $ J$.

It is important to note that
\begin{equation}\label{e.varphiIP}
\lvert  \ip \varphi _{\vec r}, f _{\vec s},\rvert
\lesssim
\begin{cases}
0 &  \exists j \mid s_j<r_j
\\
C _{\varphi }2 ^{- \lvert  \vec r-\vec s\rvert } & \textup{otherwise}
\end{cases}
\end{equation}
The first line follows from the fact that $ \varphi $ is supported on $ [-1/2,1/2]$,
so that if e.\thinspace g.\thinspace $s_1<r_1 $, the fact that $ \varphi $ has mean
zero proves this estimate.  The second estimate follows from \eqref{e.IPphi} and
the assumption that the coefficients $ \alpha (R)$ are at most one in absolute value.

Let us take the intervals $ I_t$ in \eqref{e.Iq}, and let us assume that
\begin{equation} \label{e.4}
\sum _{\lvert  R\rvert=2 ^{n} } \lvert  \alpha (R)\rvert
\le 4 \sum _{t=1} ^{q} \sum _{\vec r\in \mathbb A _t}
\sum _{R\mid \lvert  R_j\rvert=2 ^{-r_j} }\lvert  \alpha (R)\rvert \,.
\end{equation}
If this inequality fails, it is an easy matter to redefine the $ I_t$ so that
the inequality above is true, and adjacent intervals $ I_t,I _{t+1}$ are
seperated by $ n/q$.

We then follow \S~\ref{s.test} as before to define our test function $ \Psi ^{\textup{sd}}$.
It follows that $ \norm \Psi ^{\textup{sd}}.1. \lesssim 1$.
Using \eqref{e.4}, \eqref{e.zIP} and \eqref{e.varphiIP}, we have
\begin{align*}
\ip  \Phi , \Psi ^{\textup{test}}_1,
&\ge c 2 ^{-n} \widetilde \rho
\sum _{t=1} ^{q} \sum _{\vec r\in \mathbb A _t}
\sum _{R\mid \lvert  R_j\rvert=2 ^{r_j} }\lvert  \alpha (R)\rvert
\\
& \gtrsim 2 ^{-n } n ^{ -(d-1)/2+\epsilon /4}
\sum _{\lvert  R\rvert=2 ^{-n} } \lvert  \alpha (R)\rvert \,.
\end{align*}
This is the main term.

It remains to see that the inner products $ \lvert  \ip \Phi , \Psi ^{\textup{sd}}_k, \rvert $
are small $ k\ge 1$.  The details of this calculation are very similar to the corresponding
calculuations in the previous section, hence they are omitted.

 \begin{bibsection}
 \begin{biblist}

\bib{MR1032337}{article}{
    author={Beck, J{\'o}zsef},
     title={A two-dimensional van Aardenne-Ehrenfest theorem in
            irregularities of distribution},
   journal={Compositio Math.},
    volume={72},
      date={1989},
    number={3},
     pages={269\ndash 339},
      issn={0010-437X},
    review={MR1032337 (91f:11054)},
}

\bib{MR903025}{book}{
    author={Beck, J{\'o}zsef},
    author={Chen, William W. L.},
     title={Irregularities of distribution},
    series={Cambridge Tracts in Mathematics},
    volume={89},
 publisher={Cambridge University Press},
     place={Cambridge},
      date={1987},
     pages={xiv+294},
      isbn={0-521-30792-9},
    review={MR903025 (88m:11061)},
}

\bib{bl}{article}{
 author={Bilyk, Dmitriy},
 author={Lacey, Michael T.},
 title={On the Small  Ball  Inequality in Three Dimensions},
  eprint={arXiv:math.CA/0609815},
  journal={Duke Math J., to appear},
  date={2006},
 }

 \bib{MR1777539}{article}{
   author={Dunker, Thomas},
   title={Estimates for the small ball probabilities of the fractional
   Brownian sheet},
   journal={J. Theoret. Probab.},
   volume={13},
   date={2000},
   number={2},
   pages={357--382},
   issn={0894-9840},
   review={\MR{1777539 (2001g:60085)}},
}

 \bib{2000b:60195}{article}{
    author={Dunker, Thomas},
    author={K{\"u}hn, Thomas},
    author={Lifshits, Mikhail},
    author={Linde, Werner},
     title={Metric entropy of the integration operator and small ball
            probabilities for the Brownian sheet},
  language={English, with English and French summaries},
   journal={C. R. Acad. Sci. Paris S\'er. I Math.},
    volume={326},
      date={1998},
    number={3},
     pages={347\ndash 352},
      issn={0764-4442},
    review={MR2000b:60195},
}

\bib{MR1439553}{article}{
    author={Fefferman, R.},
    author={Pipher, J.},
     title={Multiparameter operators and sharp weighted inequalities},
   journal={Amer. J. Math.},
    volume={119},
      date={1997},
    number={2},
     pages={337\ndash 369},
      issn={0002-9327},
    review={MR1439553 (98b:42027)},
}

\bib{MR637361}{article}{
   author={Hal{\'a}sz, G.},
   title={On Roth's method in the theory of irregularities of point
   distributions},
   conference={
      title={Recent progress in analytic number theory, Vol. 2},
      address={Durham},
      date={1979},
   },
   book={
      publisher={Academic Press},
      place={London},
   },
   date={1981},
   pages={79--94},
   review={\MR{637361 (83e:10072)}},
}

\bib{MR94j:60078}{article}{
    author={Kuelbs, James},
    author={Li, Wenbo V.},
     title={Metric entropy and the small ball problem for Gaussian measures},
   journal={J. Funct. Anal.},
    volume={116},
      date={1993},
    number={1},
     pages={133\ndash 157},
      issn={0022-1236},
    review={MR 94j:60078},
}

\bib{MR2003m:60131}{article}{
    author={K{\"u}hn, Thomas},
    author={Linde, Werner},
     title={Optimal series representation of fractional Brownian sheets},
   journal={Bernoulli},
    volume={8},
      date={2002},
    number={5},
     pages={669\ndash 696},
      issn={1350-7265},
    review={MR 2003m:60131},
}

\bib{MR1861734}{article}{
   author={Li, W. V.},
   author={Shao, Q.-M.},
   title={Gaussian processes: inequalities, small ball probabilities and
   applications},
   conference={
      title={Stochastic processes: theory and methods},
   },
   book={
      series={Handbook of Statist.},
      volume={19},
      publisher={North-Holland},
      place={Amsterdam},
   },
   date={2001},
   pages={533--597},
   review={\MR{1861734}},
}

\bib{MR0066435}{article}{
   author={Roth, K. F.},
   title={On irregularities of distribution},
   journal={Mathematika},
   volume={1},
   date={1954},
   pages={73--79},
   issn={0025-5793},
   review={\MR{0066435 (,575c)}},
}

\bib{MR0319933}{article}{
   author={Schmidt, Wolfgang M.},
   title={Irregularities of distribution. VII},
   journal={Acta Arith.},
   volume={21},
   date={1972},
   pages={45--50},
   issn={0065-1036},
   review={\MR{0319933 (47 \#8474)}},
}

\bib{MR0491574}{article}{
   author={Schmidt, Wolfgang M.},
   title={Irregularities of distribution. X},
   conference={
      title={Number theory and algebra},
   },
   book={
      publisher={Academic Press},
      place={New York},
   },
   date={1977},
   pages={311--329},
   review={\MR{0491574 (58 \#10803)}},
}

\bib{MR95k:60049}{article}{
    author={Talagrand, Michel},
     title={The small ball problem for the Brownian sheet},
   journal={Ann. Probab.},
    volume={22},
      date={1994},
    number={3},
     pages={1331\ndash 1354},
      issn={0091-1798},
    review={MR 95k:60049},
}

\bib{MR96c:41052}{article}{
    author={Temlyakov, V. N.},
     title={An inequality for trigonometric polynomials and its application
            for estimating the entropy numbers},
   journal={J. Complexity},
    volume={11},
      date={1995},
    number={2},
     pages={293\ndash 307},
      issn={0885-064X},
    review={MR 96c:41052},
}

\bib{MR1005898}{article}{
   author={Temlyakov, V. N.},
   title={Approximation of functions with bounded mixed derivative},
   journal={Proc. Steklov Inst. Math.},
   date={1989},
   number={1(178)},
   pages={vi+121},
   issn={0081-5438},
   review={\MR{1005898 (90e:00007)}},
}

\bib{MR1018577}{article}{
   author={Wang, Gang},
   title={Sharp square-function inequalities for conditionally symmetric
   martingales},
   journal={Trans. Amer. Math. Soc.},
   volume={328},
   date={1991},
   number={1},
   pages={393--419},
   issn={0002-9947},
   review={\MR{1018577 (92c:60067)}},
}

  \end{biblist}
 \end{bibsection}

 \end{document}